\newtheorem{theorem}{Theorem}
\numberwithin{theorem}{section}
\newtheorem{condition}[theorem]{Condition}
\newtheorem{corollary}[theorem]{Corollary}
\newtheorem{lemma}[theorem]{Lemma}
\newtheorem{proposition}[theorem]{Proposition}
\newtheorem{remark}[theorem]{Remark}
\newenvironment{proof}[1][Proof]{\textbf{#1.} }{\ \rule{0.5em}{0.5em}}
\tikzset{
    scale plot marks/.is choice,
    scale plot marks/false/.code={
        \def\pgfuseplotmark##1{\pgftransformresetnontranslations\csname pgf@plot@mark@##1\endcsname}
    },
    scale plot marks/true/.style={},
    scale plot marks/.default=true
}
\newcommand{\sss}{\scriptscriptstyle}
\newcommand{\Prob}{\mathbb{P}}
\newcommand{\E}{\mathbb{E}}
\newcommand{\CMd}{CM_n(\mathbf{d})}
\newcommand{\Cmd}{CM_N(\mathbf{d}')}
\newcommand{\dto}{\overset{d}{\to}}
\newcommand{\pto}{\overset{\Prob}{\to}}
\newcommand{\Cmax}{\mathcal{C}_{\rm max}}
\newcommand{\CMD}{CM_n(\mathbf{d},q)}
\newcommand{\op}{o_{\Prob}}
\newcommand{\Op}{O_{\Prob}}
\newcommand{\Acal}{\mathcal{A}}
\newcommand{\Ncal}{\mathcal{N}}
\newcommand{\Dcal}{\mathcal{D}}
\newcommand{\Vcal}{\mathcal{V}}
\newcommand{\indi}{\mathbbm{1}}
\title{Failure behavior in a connected configuration model\\ under a critical loading mechanism}
\author{Fiona~Sloothaak \and Lorenzo Federico}
\date{\vspace{-1ex}}
\begin{document}
\maketitle

\begin{abstract}
We study a cascading edge failure mechanism on a connected random graph with a prescribed degree sequence, sampled using the configuration model. This mechanism prescribes that every edge failure puts an additional strain on the remaining network, possibly triggering more failures. We show that under a critical loading mechanism that depends on the global structure of the network, the number of edge failure exhibits scale-free behavior (up to a certain threshold). Our result is a consequence of the failure mechanism and the graph topology. More specifically, the critical loading mechanism leads to scale-free failure sizes for any network where no disconnections take place. The disintegration of the configuration model ensures that the dominant contribution to the failure size comes from edge failures in the giant component, for which we show that the scale-free property prevails. We prove this rigorously for sublinear thresholds, and we explain intuitively why the analysis follows through for linear thresholds. Moreover, our result holds for other graph structures as well, which we validate with simulation experiments.
\end{abstract}

\section{Introduction}
Cascading failures is a term used to describe the phenomenon where an initial disturbance can trigger subsequent failures in a system. A typical way to describe cascading failure models is through a network where every node/edge failure creates an additional strain on the surviving network, possibly leading to knock-on failure effects. It appears in many different application areas, such as communication networks~\cite{Motter2002,motter2004,Korkali2017}, road systems~\cite{Chakrabarti2006,Zheng2008}, earthquakes~\cite{BakTangWiesenfeld1987,BakTangWiesenfeld1988,BakTangWiesenfeld1989}, material science~\cite{Pradhan2010}, epidemiology~\cite{Watts2002simplemodelof, morone2015}, power transmission systems~\cite{Carreras2004Chaos,Qi2013,Bienstock2016,SunHouSunQi2018}, and more. 

A macroscopic characteristic that is observed in different natural and engineering systems, is the emergence of scale-free behavior~\cite{Barabasi1999, suki1994, barabasi2005, Clauset2009, Simon1955}. Although this heavy-tailed property sometimes relates to the nodal degree distribution, there are many physical networks that do not share this feature (e.g.~power grids~\cite{Wang2010}). An intriguing question arises why failure sizes still display scale-free behavior for these types of networks. 

In this paper, we introduce a cascading mechanism on a complex network whose nodal distribution does not (necessarily) exhibit scale-free behavior. The cascade is initialized by a small additional loading of the network, and failures occur on edges whenever its load capacity is exceeded. An intrinsic feature in this work is that the propagation of failures occurs non-locally and depends on the global network structure, which continually changes as the failure process advances. This is a novel feature with respect to the existing literature: in order to obtain an analytically tractable model, the cascading mechanisms is often described through local relations, or assumes an independence of the global network structure throughout the cascade. Well-known examples that satisfy at least one of these properties are epidemiology models (where an initially infected node may taint each of its neighbors with a fixed probability), or percolation models (where each edge/node in the network fails with a fixed probability). In this work, we introduce a critical load function that leads to a power-law distributed tail for the failure size. 


\subsection{Model description}
Let $G=(V,E)$ denote a graph, where $V$ denotes the vertex set with $|V|=n$, and $E$ denotes the edge set with $|E|=m$. Typically, we consider graphs that can be scaled in the number of vertices/edges. Suppose that each edge in the network is subjected to a load demand that is initially exceeded by its edge capacity. The difference between the initial load and the edge capacity is called the \textit{surplus capacity} of the edge, and we assume the surplus capacities to be independently and uniformly distributed on $[0,1]$ at the various edges. The failure process is triggered by an initial disturbance: all edges are additionally loaded with $\theta/m$ for some constant $\theta>0$. If the total load increase surpasses the surplus capacity of one or more edges, these edges fail and in turn, cause an additional load increase on all surviving edges that are in the same component upon failure. We call these load increments the \textit{load surges}. We make two assumptions: edge failures occur subsequently, and all edges in the same component upon failure experience the same load surge. We continue with the failure of edges that have insufficient surplus capacity to handle the load surges till there are no more. We are interested in the number of edge failures, also referred to as the failure size. 

We are interested in the setting where the failure size exhibits scale-free behavior. To this purpose, we define the \textit{critical load surge function} $l_j^m(i)$ at edge $j$ after experiencing~$i$ load surges to be
\begin{align}
\left\{\begin{array}{ll}
l_j^m(1) = \theta/m,\\
l_j^m(i+1) = l_j^m(i)+\frac{1-l_j^m(i)}{|E_j^m (i-1)|}, \hspace{1cm} i=1,\ldots,m-1,
\end{array}\right.
\label{eq:LoadSurgeRecursion}
\end{align}
where $|E_j^m(i)|$ is the number of edges in the component that contains edge $j$ after perceiving~$i$ edge failures in that component. 

We observe that as long as two edges remain in the same component during the cascade, this recursive relation implies that the load surges are the same at both edges. Moreover, as long as all edges remain to be in a single component, it holds that $|E_j^m(i)|=m-i$ at every surviving edge, and hence~\eqref{eq:LoadSurgeRecursion} is solved by
\begin{align}
l_j^m(i) = \frac{\theta}{m}+(i-1) \cdot \frac{1-\theta/m}{m} = \frac{\theta+i-1}{m}(1+o(m)), \hspace{1cm} i=1,\ldots,m .
\label{eq:LoadSurgeNoDisconnection}
\end{align}
In particular, applying~\eqref{eq:LoadSurgeRecursion} to a star topology with $n+1$ nodes and $m=n$ edges yields load surge function~\eqref{eq:LoadSurgeNoDisconnection} for all surviving edges.

We point out that the load surges defined in~\eqref{eq:LoadSurgeRecursion} are typically non-deterministic, and are only well-defined as long as edge $j$ has not yet failed throughout the cascade. Edge failures may cause the network to disintegrate in multiple components, which affects $|E_j^m(i)|$ in a probabilistic way that depends on the structure of the graph. In contrast to processes in epidemiology or percolation models, the propagation of failures thus occurs non-locally and depends on the global structure of the graph throughout the cascade. 

To provide an intuitive understanding why~\eqref{eq:LoadSurgeRecursion} gives rise to scale-free behavior for the failure size, consider the following. For a scale-free failure size to appear, the cascade propagation should occur in some form of \textit{criticality}. This happens when the load surges are close to the expected values of the ordered surplus capacities. More specifically, since the surplus capacities are independently and uniformly distributed on~$[0,1]$, the mean of the $i$-th smallest surplus capacity is $i/(m+1)$. If no disconnections occurred during the cascade, this would imply that the additional load surges at every edge should be close to $1/(m+1)$ after every edge failure. This explains intuitively why~\eqref{eq:LoadSurgeNoDisconnection} leads to scale-free behavior for the star-topology, as is shown rigorously in~\cite{Sloothaak2016}. Yet, whenever the network disintegrates in different components, the consecutive load surges need to be of a size such that the load surge is close the smallest expected surplus capacity of the remaining edges in order for the cascade to remain in the window of criticality. More specifically, suppose that the first disconnection occurs after $k$ edge failures and splits the graph in two components of edge size $l$ and $m-k-l$, respectively. Due to the properties of uniformly distributed random variables, we note that the expectation of the smallest surplus capacity in the first component equals $k/(m+1)+(1-k/(m+1))/l$. In other words, in order to stay in a critical failure regime, the additional load surges should be close to $(1-k/(m+1))/l$ after every edge failure until the cascade stops in that component, or another disconnection occurs. This process can be iterated, and gives rise to load surge function~\eqref{eq:LoadSurgeRecursion}. 

In this paper, our main focus is to apply failure mechanism~\eqref{eq:LoadSurgeRecursion} to the (connected) \textit{configuration model} $CM_n(\mathbf{d})$ on $n$ vertices with a prescribed degree sequence $\mathbf{d}=(d_1,...,d_n)$. The configuration model is constructed by assigning $d_v$ half-edges to each vertex $v \in [n]:=\{1,...,n\}$, after which the half-edges are paired randomly: first we pick two half-edges at random and create an edge out of them, then we pick two half-edges at random from the set of remaining half-edges and pair them into an edge, and we continue this process until all half-edges have been paired. For consistency, we assume that the total degree $\sum_{i=1}^n d_i$ is even. The construction can give rise to self-loops and multiple edges between vertices, but these events are relatively rare when $n$ is large~\cite{Hofs17,Jan09,Jan14}. We point out that the number of edges is thus a function of $n$ and $\mathbf{d}$, i.e. $m:= m_n(\mathbf{d}) := \sum_{i=1}^n d_i /2$, where we suppress the dependency on $n$ and $\mathbf{d}$ for the sake of exposition. 

Define $n_i$ as the number of vertices of degree $i$, and let $D_n$ denote the degree of a vertex chosen uniformly at random from $[n]$. We assume the following condition on the degree sequence.
\begin{condition}[Regularity conditions]\label{con:RegularConfiguration}
	Unless stated otherwise, we assume that $CM_n(\textbf{d})$ satisfies the following conditions:
	\begin{itemize}
		\item There exists a limiting degree variable $D$ such that $D_n$ converges in distribution to $D$ as $n \rightarrow \infty$;
		\item $n_0, n_1=0$;
		\item $p_2:= \lim_{n \rightarrow \infty} n_2/n \in (0,1)$;
		\item $n_j=0$ for all $j \geq n^{1/4 -\varepsilon}$ for some $\varepsilon >0$;
		\item $d:= \lim_{n \rightarrow \infty} \E[D_n] < \infty$.
	\end{itemize}
\end{condition}

\noindent
Under these conditions, we can write $p_i := \lim_{n \rightarrow \infty} n_i/n$ as the limiting fraction of degree $i$ vertices in the network. Moreover, under these conditions it is known that there is a positive probability for $CM_n(\mathbf{d})$ to be connected~\cite{Federico2016,Luc92}. Our starting point will be such a configuration model conditioned to be connected, denoted by $\overline{CM}_n(\mathbf{d})$, on which we apply the edge failure mechanism~\eqref{eq:LoadSurgeRecursion}. We are interested in quantifying the resilience of the connected configuration model under~\eqref{eq:LoadSurgeRecursion}, measured by the number of edge failures~$A_{n,\mathbf{d}}$.

\begin{remark}
In Condition~\ref{con:RegularConfiguration}, we assume $n_0=0$ to ensure that the resulting graph has a positive probability to be connected. Moreover, as explained in~\cite{Federico2016}, it suffices to pose the condition that $n_1=o(\sqrt{n})$ for a positive probability of having a connected configuration model. We point that our results do not change when allowing that $n_1 = o(\sqrt{n})$, but for the sake of exposition, we removed the details in this paper.
\end{remark}

\subsection{Notation}
In Appendix~\ref{app:Notation}, we provide an overview of the quantities that are commonly used throughout the paper. Unless  stated otherwise, all limits are taken as $n\rightarrow \infty$, or equivalently by Condition~\ref{con:RegularConfiguration}, as $m \rightarrow \infty$. A sequence of events $(A_n)_{n \in \mathbb{N}}$ happens with high probability (w.h.p.) if $\Prob(A_n)\rightarrow 1$. For random variables $(A_n)_{n \in \mathbb{N}}$, we write $X_n \overset{d}{\rightarrow} X$ and $X_n \overset{\Prob}{\rightarrow} X$ to denote convergence in distribution and in probability, respectively. For real-valued sequences $(a_n)_{n \in \mathbb{N}}$ and $(b_n)_{n \in \mathbb{N}}$, we write $a_n=o(b_n)$ and $a_n=O(b_n)$ if $\lim_{n \rightarrow \infty} a_n/b_n$ tends to zero or is bounded, respectively. Similarly, we write $a_n=\omega(b_n)$ and $a_n=\Omega(b_n)$ if $\lim_{n \rightarrow \infty} b_n/a_n$ tends to zero or is bounded, respectively. We write $a_n=\Theta(b_n)$ if both $a_n=O(b_n)$ and $a_n=\Omega(b_n)$ hold. We adopt an analogue notation for random variables, e.g.~for sequences of random variables $(X_n)_{n \in \mathbb{N}}$ and $(Y_n)_{n \in \mathbb{N}}$, we denote $X_n=o_{\Prob}(Y_n)$ if $X_n/Y_n \overset{\Prob}{\rightarrow} 0$. For convenience of notation, we denote $a_n \ll b_n$ if $a_n = o(b_n)$. Finally, $\textrm{Poi}(\lambda)$ always denotes a Poisson distributed random variable with mean $\lambda$, $\textrm{Exp}(\lambda)$ denotes an exponentially distributed random variable with parameter $\lambda$, and $\textrm{Bin}(n,p)$ denotes a binomial distributed random variable with parameters~$n$ and $p$.

\subsection{Main result}
We are interested in the probability that the failure size exceeds a threshold $k$. In this paper, we mainly focus on thresholds satisfying $1 \ll k \ll m^{1-\delta}$ for some $\delta \in (0,1)$, which we refer to as the sublinear case. Our main result shows that the failure size has a power-law distribution.

\begin{theorem}
	Consider the connected configuration model $\overline{CM}_n(\mathbf{d})$, and suppose $k:= k_m$ such that $1 \ll k \ll m^{1-\delta}$ for some $\delta \in (0,1)$. Then,
	\begin{align}
	\Prob\left(A_{n,\mathbf{d}} \geq k\right) \sim \frac{2\theta}{\sqrt{2\pi}} k^{-1/2}.
	\label{eq:MainResultEdges}
	\end{align}
	\label{thm:MainResultSublinear}
\end{theorem}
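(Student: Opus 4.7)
The plan is to reduce to the ``no-disconnection'' baseline, for which the asymptotic $\Prob(A^{\star} \geq k) \sim 2\theta/\sqrt{2\pi k}$ is proven in~\cite{Sloothaak2016} via order-statistic and random-walk arguments (relying only on~\eqref{eq:LoadSurgeNoDisconnection}). The task is then to show that, for the connected configuration model $\overline{CM}_n(\mathbf{d})$ in the sublinear regime $k=o(m^{1-\delta})$, the disconnections caused by failing edges do not alter this $k^{-1/2}$ scaling.

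\textbf{Step 1 (Baseline recall).} In the no-disconnection setting, $\{A^{\star} \geq k\}$ is equivalent to the order-statistic condition $m U_{(i)} \leq \theta + i - 1$ for all $i=1,\ldots,k$, where $U_{(1)} \leq \cdots \leq U_{(m)}$ are the uniform order statistics of the $m$ surplus capacities. Passing to the scaling limit $mU_{(i)} \to T_i$, with $(T_i)_{i \geq 1}$ the arrival times of a rate-1 Poisson process, this reduces to a classical linear-boundary Poisson-ballot problem. A convenient sanity check is to condition on $T_1=t_1 \leq \theta$: memorylessness gives the asymptotic recursion $\Prob(A^{\star}_{\theta}\geq k) \sim \int_0^\theta e^{-t_1}\Prob(A^{\star}_{\theta-t_1+1}\geq k-1)\,dt_1$, and plugging the ansatz $C(\theta)/\sqrt{k}$ together with $\int_0^\theta e^{-t_1}(\theta-t_1+1)\,dt_1=\theta$ yields the linearity $C(\theta) = c\,\theta$; the constant $c=2/\sqrt{2\pi}$ is pinned down by the random-walk computation of~\cite{Sloothaak2016}.

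\textbf{Step 2 (Coupling via the giant).} The failure ordering is always a prefix of the uniformly random ordering of edges induced by the surplus capacities $(U_j)_{j=1}^m$; only the stopping rule depends on the load surge function. I would couple the cascade on $\overline{CM}_n(\mathbf{d})$ with the baseline cascade using the same edge ordering, and show that with probability $1-o(k^{-1/2})$ the first $k$ edges removed in this ordering leave a giant component of $m(1-o(1))$ edges and detach only $o_{\Prob}(k)$ edges into small components. This is a percolation statement on $\overline{CM}_n(\mathbf{d})$: by Condition~\ref{con:RegularConfiguration} and the connectedness analysis of~\cite{Federico2016}, removing any $o(m^{1-\delta})$ set of edges leaves the giant robust. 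Consequently, for any edge $j$ surviving in the giant, $|E_j^m(i-1)|=m(1+o(1))$ throughout, so $l_j^m(i) = (\theta+i-1)/m\cdot(1+o(1))$ matches the baseline load surge. Writing $A_{n,\mathbf{d}} = A^{\mathrm{giant}} + A^{\mathrm{small}}$ with $A^{\mathrm{small}} = o_{\Prob}(k)$, the event $\{A_{n,\mathbf{d}}\geq k\}$ coincides with $\{A^{\mathrm{giant}}\geq k(1+o(1))\}$ up to negligible probability, and applying Step~1 inside the giant yields~\eqref{eq:MainResultEdges}.

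\textbf{Main obstacle.} The hardest part is establishing the coupling in Step~2 with the sharp tolerance required to preserve the $k^{-1/2}$ constant; a naive weak-limit statement does not suffice, since the event itself has probability $\Theta(k^{-1/2})$. The regularity condition $p_2 \in (0,1)$ is delicate here, because long chains of degree-2 vertices in $\overline{CM}_n(\mathbf{d})$ can in principle be pinched off into moderately large components when a single ``bridge'' edge fails early, and one must rule out that such events contribute on the $k^{-1/2}$ scale. I would handle this via a joint analysis of the $2$-edge-connected (block) decomposition and the surplus-driven failure ordering, exploiting that for $k = o(m^{1-\delta})$ only a vanishing fraction of edges fails, so that w.h.p.\ no bridge fails in the early stages and the giant remains asymptotically intact.
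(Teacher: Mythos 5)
Your high-level strategy matches the paper: decouple the randomness of surplus capacities from the randomness of network disintegration, show that a sublinear edge-removal process detaches only a small number of edges from a robust giant, and then show that the cascade in the giant behaves like the star/no-disconnection case. You also correctly identify the central obstacle: the event $\{A_{n,\mathbf{d}} \geq k\}$ has probability $\Theta(k^{-1/2})$, so any error term introduced by the disintegration must be controlled at a finer scale than ``$(1+o(1))$'', and a weak-limit statement does not suffice.

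The gap is precisely that you name the obstacle but do not resolve it. Your Step~2 says that $|E_j^m(i-1)| = m(1+o(1))$ implies $l_j^m(i) = (\theta+i-1)/m\cdot(1+o(1))$ and that one can then ``apply Step~1 inside the giant.'' That multiplicative $(1+o(1))$ on the load surge is far too weak: the relevant first-passage event compares $mU_{(i)}$ to $\theta + i - 1$, whose gap is $\Theta(\sqrt{k})$ on the relevant scale, so even a cumulative perturbation of order $\sqrt{k}$ in $\sum_{j\leq i}(1-L_{j,m})$ changes the constant in the $k^{-1/2}$ tail. What is actually needed, and what the paper proves, is that the cumulative perturbation stays $o(i^\gamma)$ for some $\gamma < 1/2$ uniformly over $i\le k$ with error probability $o(m^{-1/2})$ (Proposition~\ref{prop:NoExceedanceOfLs}), and then a genuinely new first-passage result for a random walk bridge over a moving boundary that is constant up to a diverging time $l$ (with $k^\alpha \ll l \ll k$) and grows like $\pm i^\gamma$ afterwards (Lemmas~\ref{lem:UBSimilarStoppingTimeBehavior}--\ref{lem:LBSimilarStoppingTimeBehavior}, Corollary~\ref{cor:SimilarStoppingTimeBehavior}). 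This boundary class is not covered by off-the-shelf moving-boundary results, and the paper has to build it by patching together piecewise-constant boundaries and using uniform local estimates from Doney; the bridge conditioning (the $S_{m+1}=0$ event) requires a separate argument in the proof of Proposition~\ref{prop:SublinearK}. None of this appears in your outline beyond ``I would handle this via a joint analysis of the $2$-edge-connected decomposition,'' which addresses the graph-disintegration side but not the random-walk side of the problem.

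On the graph side your instinct about bridges and long degree-$2$ chains is the right concern, but the paper's actual tool is different: it uses Janson's explosion algorithm to reduce percolated $\CMd$ to a fresh configuration model with modified degrees, then shows that the components detached from the giant are overwhelmingly isolated vertices, short lines, and occasional cycles, with total edge count concentrating at $\Theta(i^2/m)$ (Theorem~\ref{thm:FailureSublinear}) and large-deviation bounds of order $O(m^{-3})$ (Theorem~\ref{thm:DeviationsOutsideGiant}). These polynomial error rates are what make the later random-walk argument close. Your $2$-edge-connected decomposition might in principle yield similar control, but you would still need the quantitative concentration and the matching moving-boundary first-passage lemma, neither of which you provide; so as written the proposal is a correct roadmap with the hard step left open.
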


To see why~\eqref{eq:MainResultEdges} holds, we need to understand the typical behavior of the failure process as the cascade continues. A first result we show is that it is likely that the number of edge failures that need to occur for the network to become disconnected is of order $\Theta(\sqrt{m})$. This suggests that as long as the threshold satisfies $k=o(\sqrt{m})$, the tail is the same as in the case of a star topology. The latter case is known to satisfy~\eqref{eq:MainResultEdges}, as is shown in~\cite{Sloothaak2016}.

As long as the cascade continues, we show that it typically disconnects small components. This suggests that up to a certain point, the cascading failure mechanism creates a network with a single large component that contains almost all vertices and edges, referred to as the \textit{giant component}, and some small disconnected components with few edges. It turns out that the total number of edges that are outside the giant component is sufficiently small, and hence the dominant contribution to the failure size comes from the number of edges that are contained in the giant component upon failure. Moreover, due to small sizes of the components outside the giant component, the load surge function for the edges in the giant as prescribed by~\eqref{eq:LoadSurgeRecursion} is relatively close to~\eqref{eq:LoadSurgeNoDisconnection}. We show that as long as threshold $k \ll m^{1-\delta}$ for some $\delta \in (0,1)$, the perturbations are sufficiently small such that the failure size behavior in the giant component satisfies~\eqref{eq:MainResultEdges}.

We heuristically explain in Section~\ref{sec:UniversalityPrinciple} that the power-law tail property prevails beyond the sublinear case up to a certain critical point. However, the prefactor is affected in this case, i.e. the failure size tail is different from the star topology. Moreover, this notion holds for a broader set of graphs, and we validate this claim by extensive simulation experiments in Section~\ref{sec:UniversalityPrinciple}.

We would like to remark that the disintegration of the network by the cascading failure mechanism is closely related to percolation, a process where each edge is failed/removed with a corresponding removal probability. In fact, percolation results will be crucial in our analysis to prove Theorem~\ref{thm:MainResultSublinear}, combined with first-passage theory of random walks over moving boundaries. Before we lay out a more detailed road map to prove Theorem~\ref{thm:MainResultSublinear}, we provide an outline of the remainder of this paper.

\subsection{Outline}
The proof of Theorem~\ref{thm:MainResultSublinear} requires many different steps, and therefore we provide a road map of the proof in Section~\ref{sec:RoadMapProof}. We explain that we need to derive novel percolation results for the sublinear case, which we show in Section~\ref{sec:Disintegration}. We rigorously identify the impact of the disintegration of the network on the cascading failure process in Section~\ref{sec:CascadingFailureProcess}, where we use first-passage theory for random walks over moving boundaries to conclude our main result. Finally, we study the failure size behavior beyond the sublinear case in Section~\ref{sec:UniversalityPrinciple}, as well as the failure size behavior for other graphs.

\section{Proof strategy for Theorem~\ref{thm:MainResultSublinear}}\label{sec:RoadMapProof}
Our proof of Theorem~\ref{thm:MainResultSublinear} requires several steps. In this section, we provide a high-level road map of the proof.

\subsection{Relation of failure process and sequential removal process}
There are two elements of randomness involved in the cascading failure process: the sampling of the surplus capacities of the edges, and the way the network disintegrates as edge failures occur. The second aspect determines the values of the load surges, and only when the surplus capacity of an edge is insufficient to handle the load surge, the cascading failure process continues. Recall that as long as edges remain in the same component, they experience the same load surge. Since the surplus capacities are i.i.d., it follows that every edge in the same component is equally likely to be the next edge to fail as long as the failure process continues in that component. This is a crucial observation, as this provides a relation with the sequential edge-removal process. That is, suppose that we sequentially remove edges uniformly at random from a graph. Given that a new edge removal occurs in a certain component, each edge in that component is equally likely to be removed next, just as in the cascading failure process. Consequently, this observation gives rise to a coupling of the disintegration of the network through the cascading failure process to one that is caused by sequentially removing edges uniformly at random.

\begin{figure}[htb]
	\centering
	\includegraphics[width=0.4\textwidth]{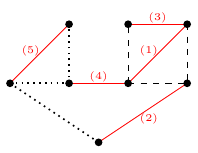}
	\caption{Sample of sequentially removing five edges uniformly at random from a connected graph. We remove the red edges subsequently in successive order, leaving two disconnected component. The first component is connected by the dotted lines, whereas the second component is connected by the dashed lines.}
	\label{fig:SequentialRemovalLoadSurgeRelation}
\end{figure}

More specifically, suppose that sequentially removing edges uniformly at random yields the permutation $\{e_{(1)},...,e_{(m)}\}$. For the cascading failure process, sample $m$ uniformly distributed random variables and order them so that $U_{(1)}^m \leq ... \leq U_{(m)}^m$ and assign to edge~$e_{(j)}$ surplus capacity~$U_{(j)}^m$. In particular, this implies that if the cascading failure process would always continue until all edges have failed, then the edges would fail in the same order as the sequential edge-removal process. Moreover, it is possible to exactly compute  the load surge function from the order $\{e_{(1)},...,e_{(m)}\}$ over the edge set. We illustrate this claim by the example in Figure~\ref{fig:SequentialRemovalLoadSurgeRelation}. In this example, we observe that $m=11$, and we consider the sequential removal process up to five edge removals. We see that the first three edge failures do not cause the graph to disconnect. It holds for every dotted edge~$e_j \in [m]$,
\begin{align*}
|E_j^m(i)|  = \left\{\begin{array}{ll}
11-i & \textrm{if } i\in \{0,1,2,3\} \\
4 & \textrm{if } i=4,\\
3 & \textrm{if } i=5,
\end{array}\right.
\end{align*}
and for every dashed edge $e_j \in [m]$,
\begin{align*}
|E_j^m(i)|  = \left\{\begin{array}{ll}
11-i & \textrm{if } i\in \{0,1,2,3\} \\
3 & \textrm{if } i=4.
\end{array}\right.
\end{align*}
Recursion~\eqref{eq:LoadSurgeRecursion} yields the corresponding load surge values. 

This example illustrates that using our coupling, the sequential removal process gives rise to the load surge values as prescribed in~\eqref{eq:LoadSurgeRecursion}. We point out that due to our coupling, if after step $j-1$ an edge $e_{(j)}$ for some $j \in [m]$ has sufficient surplus capacity to deal with the load surge, then so do all the other edges in that component. In other words, the cascade stops in that component. To determine the failure size of the cascade, one needs to subsequently compare the load surge values to the surplus capacities in all components until the surplus capacities at all the surviving edges are sufficient to deal with the load surges. 

In summary, sequentially removing edges uniformly at random gives rise to the load surge values in every component. This idea decouples the two sources of randomness: first one needs to understand the disintegration of the network through the sequential removal process, leading to the load surge values through~\eqref{eq:LoadSurgeRecursion}. Then, we can determine the failure size by comparing the surplus capacities to the load surges up to the point where the cascade stop in every component. 

\subsection{Disintegration of the network through sequential removal}
The next step is to determine the typical behavior of the sequential removal process on the connected configuration model $\overline{CM}_n(\mathbf{d})$. A first question that needs to be answered is how many edges need to fail, or equivalently, need to be removed uniformly at random from the network $\overline{CM}_n(\mathbf{d})$ to become disconnected. It turns out that this is likely to occur when $\Theta(\sqrt{m})$ edges are removed.

\begin{restatable}{theorem}{Firstdisconnection}\label{thm:Firstdisconnection}
	Suppose that we subsequently remove edges uniformly at random from $\overline{CM}_n(\mathbf{d})$. Define $T_{n,\mathbf d}$ as the smallest number of edges that need to be removed for the network to be disconnected. Then,
	\begin{align*}
	m^{-1/2}T_{n,\mathbf d} \dto T(D),
	\label{eq:FirstDisconnectionConvergence}
	\end{align*}
	where $T(D)$ has a Rayleigh distribution with density function 
	\begin{align}
	f_{T(D)}(x)= \frac{4p_2 x}{d-2p_2}\mathrm e^{-\frac{2x^2p_2}{d-2p_2}}.
	\end{align}
\end{restatable}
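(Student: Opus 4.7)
The plan is to exploit the observation made in Section~\ref{sec:RoadMapProof} that the set of edges removed after $k$ steps of the sequential procedure is exactly a uniformly chosen $k$-subset of the edge set of $\overline{CM}_n(\mathbf d)$. Thus $\{T_{n,\mathbf d} > k\}$ coincides with the event that $\overline{CM}_n(\mathbf d)$ remains connected after $k$ uniformly chosen edges are removed. Since $n_0 = n_1 = 0$ and $p_2 \in (0,1)$, the cheapest disconnection mechanism is to isolate a contiguous block of degree-$2$ vertices by removing the two boundary edges of that block. I would argue that for $k = O(\sqrt m)$ this is the only relevant mechanism, reducing the problem to a Poisson approximation for the number of such ``degree-$2$ $2$-cuts'' that happen to be entirely contained in the removed subset.

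First, I would decompose $\overline{CM}_n(\mathbf d)$ into its maximal degree-$2$ chains, i.e.\ maximal paths $u{-}v_1{-}\cdots{-}v_L{-}w$ with $\deg(v_i) = 2$ for all $i$ and $\deg(u),\deg(w) \geq 3$. Removing any two of the $L+1$ edges of such a chain splits off a connected component consisting of degree-$2$ vertices, producing $\binom{L+1}{2}$ bad pairs per chain. Using the standard fact that, when pairing half-edges, a given new partner is a degree-$2$ half-edge with asymptotic probability $q := 2p_2/d$, the length $L$ of a typical chain is approximately $\mathrm{Geom}(1-q)$ on $\{0,1,2,\ldots\}$, and the number of chains is $\tfrac12 n(d - 2p_2)(1+\op(1))$ because each chain consumes exactly two degree-$\geq 3$ half-edges. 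A routine first/second moment computation, using $\E[\binom{L+1}{2}] = q/(1-q)^2$ for $L \sim \mathrm{Geom}(1-q)$, then yields
\begin{align*}
B_n \ := \ \sum_{\text{chains}} \binom{L+1}{2} \ = \ \frac{n p_2 d}{d - 2p_2}(1 + \op(1)).
\end{align*}
The max-degree bound $n^{1/4-\varepsilon}$ in Condition~\ref{con:RegularConfiguration} ensures that self-loops, multi-edges, and isolated cycles of degree-$2$ vertices contribute at most $\Op(1)$ to $B_n$ and therefore do not affect the leading order.

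For a uniform random $k$-subset of edges, a fixed pair of edges is contained in the subset with probability $k(k-1)/[m(m-1)]$. Writing $N_k$ for the number of bad pairs contained in the removed set, for $k = \lfloor x \sqrt m\rfloor$ one gets
\begin{align*}
\E[N_k \mid \mathbf d] \ = \ B_n \cdot \frac{k(k-1)}{m(m-1)} \ \pto \ \frac{2 p_2 x^2}{d - 2p_2} \ =: \ \lambda(x),
\end{align*}
using $n/m \to 2/d$. I would then upgrade this to $N_k \dto \textrm{Poi}(\lambda(x))$ either by matching factorial moments or by a Chen--Stein bound, using that bad pairs on distinct chains are almost independent and that two pairs on the same chain share only an $O(1/m)$ correlation. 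Disconnections requiring three or more edge removals are controlled by a direct union bound: the number of relevant configurations (isolation of a degree-$3$ vertex, separation of a slightly larger subgraph, etc.) is $O(n)$, each is contained in the removed set with probability $O((k/m)^3) = O(m^{-3/2})$, yielding total contribution $O(m^{-1/2}) = o(1)$. Combining the above, $\Prob(T_{n,\mathbf d} > x\sqrt m) = \Prob(N_k = 0) + o(1) \to \mathrm e^{-\lambda(x)}$, which is precisely the Rayleigh tail in the statement. The main obstacle is the Poisson step, where the weak dependence between bad pairs on the same chain must be handled by an explicit Chen--Stein second-neighbourhood bound; the mild conditioning on $CM_n(\mathbf d)$ being connected is dealt with by contiguity, since $\Prob(CM_n(\mathbf d) \text{ is connected})$ converges to a positive constant by~\cite{Federico2016}, so the limiting Poisson count is asymptotically unaffected by the connectedness event.
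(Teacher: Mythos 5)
Your computation is correct and arrives at the right Rayleigh rate, but the route is genuinely different from the paper's. The paper proves the statement by passing through percolation (Lemma~\ref{lem:CascadePercolation}) and then Proposition~\ref{prop:DisconnectivityPercolation}, which computes $\Prob(\overline{CM}_n(\mathbf d,q)\text{ connected})$ for $q = c/\sqrt m$ via the explosion algorithm: one tracks the exploded degree sequence $\mathbf d'$, invokes the Federico--van der Hofstad connectivity theorem for $CM_N(\mathbf d')$ to enumerate which small components (isolated vertices, cycles, line components) can appear, and then multiplies the corresponding Poisson-zero probabilities together with the hypergeometric probability that all length-two lines are killed in the last step. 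You instead stay in the original graph, enumerate the $2$-cuts directly via the degree-$2$ chain decomposition, compute $B_n = \tfrac{np_2 d}{d-2p_2}(1+\op(1))$, and appeal to a Chen--Stein/Poisson argument for the count of bad pairs hit plus a union bound for higher-order cuts. Both lead to the same survival function $\exp\{-2x^2 p_2/(d-2p_2)\}$; your approach avoids the explosion machinery and the cited connectivity theorem but pushes the work into the Poissonization and union-bound steps, which the paper gets essentially for free from~\cite{Federico2016}. Two small gaps worth filling in a complete write-up: you explicitly treat only $\ell$-cuts with $\ell \geq 3$ and the degree-$2$ chain $2$-cuts, but you should also rule out (i) bridges, i.e.\ $1$-cuts (whose expected number is $O(1)$, dominated by degree-$3$ vertices carrying a self-loop, and which are hit with probability $O(k/m) = o(1)$), and (ii) $2$-cuts whose separated side contains a degree-$\geq 3$ vertex (expected number $O(1)$ by a short moment computation, again negligible); the Chen--Stein neighbourhood bound also needs the chain lengths to have bounded third moment, which follows from the geometric tail but should be stated, and note that the relevant covariance for two overlapping bad pairs is $\Theta(q^3) = \Theta(m^{-3/2})$ rather than $O(1/m)$ --- the bound still closes, but the exponent is what makes it work.
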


After this point, more disconnections start to occur as more edges are removed uniformly at random from the graph. In Section~\ref{sec:TypicalStructuresPercolation}, we focus on the typical network structure if $\sqrt{m} \ll i \ll m^{1-\delta}$ edges have been removed uniformly at random for some $\delta \in (0,1/2)$. Typically, there is a giant component that contains almost all edges and vertices. The components that detach from the giant component are isolated nodes, line components, and possibly isolated cycles. We show that the total number of edges contained in these components are likely to be of order $\Theta(i^2/m)$, while the number of edges in more complex component structures are negligible in comparison. This leads to the following result, which is proved in Section~\ref{sec:NumberOfEdgesOutsideTheGiant}.

\begin{restatable}{theorem}{FailureSublinear}\label{thm:FailureSublinear}
	Suppose we remove $i:=i_m$ edges uniformly at random from the connected configuration model $\overline{CM}_n(\mathbf{d})$ with $\sqrt{m}\ll i\ll m^{1-\delta}$ for some $\delta >0$. Write $\hat{E}_m(i)$ for the set of edges in the largest component of this graph, and let $|\hat{E}_m(i)|$ denote its cardinality. Then,
	\begin{align}
	(m-i-|\hat{E}_m(i)|)\frac{m}{i^2}\pto \frac{ 4p_2^2}{(d-2p_2)^2}.
	\end{align}
\end{restatable}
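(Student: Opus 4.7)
The plan is to identify the dominant contribution to $m - i - |\hat{E}_m(i)|$ as coming from line components whose vertices are all originally of degree $2$, then compute this contribution via a first-moment calculation, and finally establish concentration via a second-moment bound. I would build on the structural percolation results from Section~\ref{sec:TypicalStructuresPercolation}, which characterize the typical detached components after uniformly removing $i$ edges as isolated nodes (contributing no edges), line components (dominant), and a negligible number of isolated cycles.

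The first step is to show that only \emph{pure} line components --- those whose two endpoints also have original degree $2$ --- contribute to leading order. A line component with an endpoint of original degree $d_0 \geq 3$ requires all $d_0 - 1$ of that endpoint's other edges to be removed, contributing an extra factor $(i/m)^{d_0 - 1}$ in expectation, making the aggregate such contribution $o(i^2/m)$ in the regime $i \ll m^{1-\delta}$. Isolated cycles require the ``external'' edges at the cycle's degree-$\geq 3$ vertices to be removed, and the combinatorial count of such cycles is only $O(1)$ per fixed length (compared to the $\Theta(n)$ count of pure line components of fixed length), so their edge contribution is $O(i^2/m^2) = o(i^2/m)$.

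Next, I would carry out the first-moment calculation for pure line components. For an ordered sequence of distinct degree-$2$ vertices $v_0, v_1, \ldots, v_\ell$, a standard half-edge counting in the configuration model gives that $v_0 v_1, \ldots, v_{\ell-1} v_\ell$ are all present as edges in $\overline{CM}_n(\mathbf{d})$ with asymptotic probability $2/m^\ell$. The event that this path forms a detached line component of length $\ell$ additionally requires the two outer edges of $v_0, v_\ell$ to be removed and the $\ell$ inner edges retained, occurring with probability $(i/m)^2(1+o(1))$ under uniform removal of $i$ edges (for $\ell = o(m/i)$). Dividing by $2$ for orientation and summing gives
\begin{align*}
\E[m - i - |\hat{E}_m(i)|] \sim \sum_{\ell \geq 1} \ell \cdot \frac{n_2^{\ell+1}}{m^\ell} \cdot \left(\frac{i}{m}\right)^2 \sim p_2 n \left(\frac{i}{m}\right)^2 \sum_{\ell \geq 1} \ell \left(\frac{2p_2}{d}\right)^\ell = \frac{4 p_2^2\, i^2}{(d-2p_2)^2\, m},
\end{align*}
using $n = 2m/d\,(1+o(1))$ and $\sum_{\ell \geq 1} \ell x^\ell = x/(1-x)^2$ with $x = 2p_2/d$.

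Finally, to upgrade from expectation to convergence in probability, I would bound the variance of the edge count. The self-variance contributes $O(\E[\cdot]) = O(i^2/m)$, which is $o(i^4/m^2)$ precisely because $i \gg \sqrt{m}$. The pairwise covariances split into disjoint path configurations, which are essentially captured in the squared mean, and overlapping configurations (sharing vertices or edges), which must be controlled by a combinatorial case analysis giving lower order. Chebyshev's inequality then yields the claimed convergence. The main obstacle is this second-moment computation --- specifically, identifying and bounding all types of overlapping path configurations --- and the lower bound $i \gg \sqrt{m}$ is sharp exactly because it ensures the variance-to-squared-mean ratio tends to zero.
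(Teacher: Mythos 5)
Your proposal takes a genuinely different route from the paper. You work directly in the edge-removed (or percolated) original configuration model and set out to count purely-degree-$2$ line components via first- and second-moment calculations, treating high-degree endpoints and cycles as error terms. The paper instead applies Janson's explosion algorithm (Algorithm~\ref{alg:ExplosionAlgorithm}): percolation with parameter $q=i/m$ is replaced by a two-stage construction that produces a \emph{fresh} configuration model $CM_N(\mathbf{d}')$ in which degree-one vertices serve as line endpoints, followed by removal of $R_n$ degree-one vertices. All joint moments of the line counts $L_k'(n)$ in $CM_N(\mathbf{d}')$ are then computed by a clean induction (Lemma~\ref{lem:MultiVariateMoments}, Corollary~\ref{cor:HigherMoments}), and the result is transferred back to $\overline{CM}_n(\mathbf{d},q)$ by accounting for the last removal step and by the conditioning argument in Remark~\ref{rem:NonVanishingProbabilityInitiallyConnected}. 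Your first-moment computation is correct and matches the paper's answer (the factors agree after the paper's correction $\frac{(d+2p_2)^2}{4(d-2p_2)^2}-\frac{d+2p_2}{2(d-2p_2)}+\frac14=\frac{4p_2^2}{(d-2p_2)^2}$), so the two approaches arrive at the same constant by quite different bookkeeping.

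The decisive gap, which you flag yourself, is that the second-moment step is not carried out. This is not a detail: it is the whole reason the paper invokes the explosion algorithm. In the exploded model, excluding one candidate line leaves again a configuration model with a slightly modified degree sequence, which makes the moment recursion straightforward; in your direct setup you would instead have to classify and bound all overlapping path pairs in the half-edge pairing \emph{and} control the dependence introduced by removing exactly $i$ edges rather than each edge independently (the paper handles the latter separately via Lemma~\ref{lem:CascadePercolation}). Until that case analysis is actually done, the proof is incomplete rather than merely sketchy, because $i\gg\sqrt m$ is used precisely to beat the variance, as you note.

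Two smaller issues. First, "pure line components --- those whose two endpoints also have original degree $2$" is not the right class: your first-moment sum uses tuples of degree-$2$ vertices throughout, so you also need internal vertices of original degree $\geq 3$ to be negligible; that requires a separate (easy, $O(i^3/m^2)$) estimate not mentioned. Second, the stated cycle bound $O(i^2/m^2)$ is not correct. In the connected graph an isolated cycle after removal must contain at least one vertex of original degree $\geq 3$, and the dominant case has exactly one such vertex, giving expected edge count $O(i/m)$; if you work in the unconditioned $CM_n(\mathbf{d})$ the count is $O(1)$ from pre-existing degree-$2$ cycles, as in \eqref{eq:CycleComponentBound}. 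Both are still $o(i^2/m)$ because $i\gg\sqrt m$, so the conclusion survives, but the reasoning should be fixed. Finally, the transfer from the unconditioned model to $\overline{CM}_n(\mathbf{d})$ is implicit in your calculation and needs to be made explicit, as the paper does via \eqref{eq:ConnectedConditioning}.
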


We stress that determining the typical behavior of the network disintegration is not enough to prove our main result. In addition, for each of our results, we need to show that it is extremely unlikely to be far from its typical behavior, which we consider in Section~\ref{sec:LargeDeviationBoundsBeyondTypicalSizes}. Moreover, we combine these large deviations results to show the following result in Section~\ref{sec:NumberOfEdgesOutsideTheGiant}.

\begin{restatable}{theorem}{DeviationsOutsideGiant}
	Suppose $i:=i_m$ edges have been removed uniformly at random from the connected configuration model $\overline{CM}_n(\textbf{d})$, where $\sqrt{m} \ll i \ll m^{\alpha}$ for some $\alpha \in (1/2,1)$. Then,
	\begin{align}\label{eq:FirstClaimCorollaryEdgesOutsideGiant}
	\Prob\left( m-i - |\hat{E}_m(i)| > i^\alpha \right) = O(m^{-3}).
	\end{align}
	Moreover, for every $k:=k_m$ for which $k = o(m^{\alpha})$ for some $\alpha \in (0,1)$,
	\begin{align}
	\Prob\left( m-i - |\hat{E}_m(i)| > i^\alpha \textrm{ for some } i \leq k \right) = o(m^{-1/2}).
	\label{eq:SecondClaimCorollaryEdgesOutsideGiant}
	\end{align}
	\label{thm:DeviationsOutsideGiant}
\end{restatable}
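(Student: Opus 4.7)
The argument will combine the decomposition of non-giant components from Section~\ref{sec:TypicalStructuresPercolation} with the large deviation estimates of Section~\ref{sec:LargeDeviationBoundsBeyondTypicalSizes}, along with the Rayleigh limit of Theorem~\ref{thm:Firstdisconnection} and the concentration underlying Theorem~\ref{thm:FailureSublinear}.

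For claim~\eqref{eq:FirstClaimCorollaryEdgesOutsideGiant}, the plan is to write
\[
m-i-|\hat E_m(i)| \;=\; X^{\mathrm{line}}_i+X^{\mathrm{cyc}}_i+X^{\mathrm{cplx}}_i,
\]
where the three summands count edges in, respectively, line components, isolated cycles, and more complex non-giant components (isolated vertices contribute nothing). The typical-structure analysis underlying Theorem~\ref{thm:FailureSublinear} shows that $X^{\mathrm{line}}_i$ and $X^{\mathrm{cyc}}_i$ are of order $\Theta(i^2/m)$, while $X^{\mathrm{cplx}}_i$ is of strictly smaller order. Since $i\ll m^\alpha$ with $\alpha\in(1/2,1)$, the threshold $i^\alpha$ exceeds the typical scale $i^2/m$ by a factor of at least $m^{(1-\alpha)^2}$, so the event in~\eqref{eq:FirstClaimCorollaryEdgesOutsideGiant} is a genuine polynomial deviation. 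I would then apply the concentration bounds of Section~\ref{sec:LargeDeviationBoundsBeyondTypicalSizes} to each of the three contributions separately and union-bound over the three types, yielding the required $O(m^{-3})$ estimate.

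For claim~\eqref{eq:SecondClaimCorollaryEdgesOutsideGiant}, I would take a union bound over $i\le k$ with a case split at $i_0:=\sqrt m\,\log m$. For $i_0\le i\le k$, claim~\eqref{eq:FirstClaimCorollaryEdgesOutsideGiant} applies and contributes at most $k\cdot O(m^{-3})=o(m^{\alpha-3})=o(m^{-1/2})$ to the total probability, using $k=o(m^\alpha)$ with $\alpha\in(0,1)$. For $i\le i_0$ the part~(i) bound is not directly available; here the tail of the Rayleigh limit in Theorem~\ref{thm:Firstdisconnection} (which is Gaussian near zero, hence stretched-exponentially small for $i\ll \sqrt m$ and still small for $i\le\sqrt m\,\log m$) controls the probability that any disconnection at all has occurred by time~$i$, and combining this with a separate size estimate for the detached components rules out deviations of size $i^\alpha$. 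The two contributions combined should again produce $o(m^{-1/2})$ after the final union bound.

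The main obstacle I foresee is obtaining the polynomial sharpness demanded by $O(m^{-3})$: soft second-moment arguments typically yield only $o(1)$-type statements, so the exponents must be tracked carefully, presumably via moment-based concentration for the number of components of each fixed small size, together with an additive argument ruling out unusually large non-giant components containing more than $i^\alpha$ edges on their own. A secondary difficulty is the pre-$i_0$ regime in part~(ii), where one must assemble a quantitative bound from Theorem~\ref{thm:Firstdisconnection} and component-size control at times of order~$\sqrt m$, and then ensure the resulting small probabilities survive a union bound over up to $i_0$ indices.
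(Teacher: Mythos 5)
Your plan for claim~\eqref{eq:FirstClaimCorollaryEdgesOutsideGiant} is broadly aligned with the paper's argument: decompose the edges outside the giant into line, cycle, and more complex components, and apply the large-deviations bounds of Section~\ref{sec:LargeDeviationBoundsBeyondTypicalSizes}. You should still make explicit the translation from sequential removal to percolation (via the order statistic $U_{(i)}^m$ and a Chernoff bound on $U_{(i)}^m$), since the bounds of Propositions~\ref{prop:OtherStuffDecay} and~\ref{prop:LinesDecay} are formulated for the percolated graph, not directly for the sequential process; the paper routes through Theorem~\ref{thm:DeviationEdgesOutsideGiant} for exactly this reason. Also note that the paper's direct argument for~\eqref{eq:FirstClaimCorollaryEdgesOutsideGiant} only covers $i \gg m^{1/2+\epsilon}$ in a first pass, with the remaining window filled in later; keep that circularity in mind.

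The serious gap is in your treatment of the pre-$i_0$ regime in claim~\eqref{eq:SecondClaimCorollaryEdgesOutsideGiant}. You propose $i_0 = \sqrt{m}\log m$ and want to argue that no disconnection has happened by time $i_0$ with probability $1 - o(m^{-1/2})$, invoking the Rayleigh limit of Theorem~\ref{thm:Firstdisconnection}. This fails for two reasons. First, you have the tail backwards: the \emph{upper} tail $\Prob(T(D) > x) = e^{-cx^2}$ is Gaussian, but what you need is the \emph{lower} tail $\Prob(T(D) \le x) = 1 - e^{-cx^2} \approx cx^2$, which is only polynomially small near zero. At $i = \sqrt{m}\log m$ this is $1 - e^{-c\log^2 m} \to 1$; a disconnection has almost certainly occurred by then, so bounding the event by ``no disconnection yet'' gives nothing. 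Even at $i = m^{1/4}$ the disconnection probability is $\Theta(m^{-1/2})$, not $o(m^{-1/2})$. Second, Theorem~\ref{thm:Firstdisconnection} is only a distributional limit, not a quantitative tail bound; the paper instead uses Proposition~\ref{propo:DisconnectivityEarly}, which gives $\Prob(\text{disconnected after } o(m^\beta) \text{ removals}) = O(m^{2\beta - 1})$. This forces the ``no disconnection'' cutoff down to about $m^{1/8}$ (giving $O(m^{-3/4}) = o(m^{-1/2})$), which in turn opens a genuine gap between $m^{1/8}$ and $m^{1/2+\epsilon}$. The paper fills that gap with a separate argument based on the near-monotonicity of $|\tilde{E}_m(\cdot)|$: if $|\tilde{E}_m(i)| > i^{1/8}$ for some $i$ in this window, then with overwhelming probability at most half of those edges are removed by time $m^{1/2+\epsilon}$, so $|\tilde{E}_m(m^{1/2+\epsilon})|$ is still large, which contradicts Theorem~\ref{thm:DeviationEdgesOutsideGiant} up to $O(m^{-3})$. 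Your plan has no counterpart to this step, so the middle range is unhandled.
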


\subsection{Impact of disintegration on the failure process}\label{sec:RoadMapCascFailureProcess}
The sequential removal process gives rise to the load surge function at every edge, and we need to compare these values to the surplus capacities in every component until the cascade stops. To keep track of the cascading failure process in every component may seem cumbersome at first glance. However, due to the way the connected configuration model is likely to disintegrate, it turns out that it only matters what happens in the giant component.

Intuitively, this can be understood as follows. By Theorem~\ref{thm:Firstdisconnection} and~\ref{thm:FailureSublinear}, removing any sublinear number $i_m$ of edges always leaves~$o(i_m)$ edges outside the giant. Therefore, if $k:=k_m \ll m$ edges are sequentially removed, then only $o(k)$ of these edges were contained outside the giant upon removal. Moreover, even if the cascading failure process struck every edge of the components outside the giant, the contribution to the failure size would be at most $o(k)$. This is negligible with respect to the $k-o(k)$ failures that occur in the giant. The failure size $A_{n,\mathbf{d}}$ should therefore be well-approximated by the failure size in the giant. We formalize these ideas in Section~\ref{sec:CascadingFailureProcess}.

More specifically, write
\begin{align*}
\hat{A}_{n,\mathbf{d}} &= \textrm{\# edges contained in the giant upon failure during the cascade},\\
\tilde{A}_{n,\mathbf{d}} &= \textrm{\# edges contained outside the giant upon failure during the cascade},
\end{align*}
and hence $A_{n,\mathbf{d}}= \hat{A}_{n,\mathbf{d}}+ \tilde{A}_{n,\mathbf{d}}$. Moreover, define
\begin{align*}
|\tilde{E}_m(i)| =& \textrm{\# remaining edges outside the giant when $i$ edges are removed uniformly at random,}\\
\kappa(i) =& \textrm{\# edges removed from giant when $i$ edges are removed uniformly at random.}
\end{align*}

The main idea is that since the number of edges outside the giant is likely to be $o(i)$ when a sublinear number of $i$ edges are removed uniformly at random, the contribution of edge failures that occur outside the giant is asymptotically negligible. That is, we bound the probability that $\{A_{n,\mathbf{d}} \geq k \}$ to occur due to the cascading failure process by the same event in two related processes. For the upper bound, we consider the scenario where all edges in the small components that disconnect from the giant immediately fail upon disconnection. For the lower bound, we consider the scenario where none of the edges in the small components that disconnect from the giant fail. 

To make this rigorous, we first consider the probability that the number of edge failures in the giant exceeds $\kappa(k)$.  That is, if we sequentially remove $k$ edges uniformly at random, a set of $\kappa(k)$ edges were contained in the giant upon failure. We are interested in the probability that the coupled surplus capacities of all these edges are insufficient to deal with the corresponding load surges. By translating this setting to a first-passage problem of a random walk bridge over a moving boundary, we show the following result in Section~\ref{sec:AsympBehvGiant}. 

\begin{restatable}{proposition}{SublinearK}
	If $k= o(m^{\alpha})$ for some $\alpha \in (0,1)$, then as $n \rightarrow \infty$,
	\begin{align*}
	\Prob\left(\hat{A}_{n,\mathbf{d}} \geq \kappa(k) \right) \sim \frac{2\theta}{\sqrt{2\pi}} k^{-1/2}.
	\end{align*}
	\label{prop:SublinearK}
\end{restatable}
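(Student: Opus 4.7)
The plan is to reduce the statement, via the coupling described in Section~\ref{sec:RoadMapProof} together with the control provided by Theorem~\ref{thm:DeviationsOutsideGiant}, to the star-topology first-passage asymptotic already established in~\cite{Sloothaak2016}. Sample the ordered surplus capacities $U^m_{(1)} \leq \cdots \leq U^m_{(m)}$ and assign $U^m_{(j)}$ to the $j$-th edge in the sequential-removal order. Let $j_1 < j_2 < \cdots$ be the global positions at which edges inside the current giant are removed. Since all surviving giant edges share the same load surge by~\eqref{eq:LoadSurgeRecursion}, the event $\{\hat A_{n,\mathbf d} \geq \kappa(k)\}$ is exactly
\begin{equation*}
\bigcap_{s=1}^{\kappa(k)} \bigl\{ U^m_{(j_s)} \leq L_m(s) \bigr\},
\end{equation*}
where $L_m(s)$ denotes the common load surge carried by every surviving giant edge at the moment of the $s$-th giant removal, obtained by iterating~\eqref{eq:LoadSurgeRecursion} with $|E^m_j(s-1)| = |\hat E_m(j_s)|$.

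The next step is to quantify $L_m(s)$ and $\kappa(k)$ on a high-probability event. Fix $\alpha' \in (\alpha,1)$ and set
\begin{equation*}
\Omega_m = \bigl\{ m - i - |\hat E_m(i)| \leq i^{\alpha'} \text{ for all } i \leq k \bigr\};
\end{equation*}
by~\eqref{eq:SecondClaimCorollaryEdgesOutsideGiant} we have $\Prob(\Omega_m^c) = o(m^{-1/2})$. On $\Omega_m$, a straightforward counting argument shows that the number of removals that occur outside the giant up to time $k$ is $O(k^{\alpha'})$, so that $\kappa(k) = k - O(k^{\alpha'})$ and $s \leq j_s \leq s + O(k^{\alpha'})$ uniformly in $s \leq \kappa(k)$. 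Telescoping~\eqref{eq:LoadSurgeRecursion} using $|\hat E_m(j_s)| = m - j_s - O(k^{\alpha'})$ then yields
\begin{equation*}
L_m(s) = \frac{\theta + s - 1}{m}\bigl(1 + O(k^{\alpha'}/m)\bigr)
\end{equation*}
uniformly on $\Omega_m$.

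Using the monotonicity $U^m_{(s)} \leq U^m_{(j_s)} \leq U^m_{(s+O(k^{\alpha'}))}$ and the uniform control on $L_m(s)$, I would then sandwich $\{\hat A_{n,\mathbf d}\geq \kappa(k)\}\cap\Omega_m$ between two deterministic-boundary first-passage events of the form
\begin{equation*}
\bigcap_{s=1}^{k'}\Bigl\{ U^m_{(s)} \leq \tfrac{\theta + s - 1}{m}(1 \pm \varepsilon_m)\Bigr\},
\end{equation*}
with $\varepsilon_m = O(k^{\alpha'}/m) = o(1)$ and $|k - k'| = O(k^{\alpha'})$. Via the classical representation $U^m_{(j)} = S_j/S_{m+1}$ with $S_j$ a sum of i.i.d.\ standard exponentials, these events correspond to a random-walk bridge staying below a line of slope close to $1$ and intercept close to $\theta$. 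This is exactly the first-passage problem analyzed for the star topology in~\cite{Sloothaak2016}, where the asymptotic $\frac{2\theta}{\sqrt{2\pi}}k^{-1/2}$ is obtained.

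The main obstacle is showing that this perturbation analysis is tight enough to preserve the prefactor $\frac{2\theta}{\sqrt{2\pi}}$. One needs either a quantitative continuity statement for the star-topology asymptotic of~\cite{Sloothaak2016} under multiplicative $(1 \pm \varepsilon_m)$ perturbations of the boundary and additive $O(k^{\alpha'})$ shifts of the endpoint, or a cyclic-lemma/Dwass-type identity giving a closed form whose dependence on the boundary parameters can be tracked explicitly. Given such a stability estimate, together with the $o(m^{-1/2})$ probability bound for $\Omega_m^c$ that absorbs the exceptional event into the error, the sandwich closes and Proposition~\ref{prop:SublinearK} follows.
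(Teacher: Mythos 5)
Your high-level plan (couple to sequential removal, use Theorem~\ref{thm:DeviationsOutsideGiant} to control removals outside the giant, reduce to a star-topology first-passage problem) does match the spirit of the paper's proof, but you have explicitly left the crux of the argument unfilled, and that gap is genuine, not routine.

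The paper proceeds by passing to the random walk $S_{i,m} = \sum_{j\le i}(L_{j,m}-\mathrm{Exp}_{j,m}(1))$ and relating $\{\hat A_{n,\mathbf d}\ge\kappa(k)\}$ to a first-passage time $\tau_m$ of a random walk bridge over the \emph{moving} boundary $1-\theta+\sum_{j\le i}(1-L_{j,m})$ (Proposition~\ref{prop:RelationToRW}). The decisive technical input is then Lemmas~\ref{lem:UBSimilarStoppingTimeBehavior} and \ref{lem:LBSimilarStoppingTimeBehavior} and Corollary~\ref{cor:SimilarStoppingTimeBehavior}: for a boundary that is constant for time $l$ with $k^\alpha\ll l\ll k$ and then $o(i^\gamma)$ with $\gamma<1/2$, the first-passage tail is asymptotically the same as over the constant boundary, with the \emph{same} prefactor $2\theta/\sqrt{2\pi}$. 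This is exactly the ``quantitative continuity statement'' that you flag as the main obstacle and punt on. The paper's remark following Lemma~\ref{lem:UBSimilarStoppingTimeBehavior} makes clear that this is not available in the literature for boundaries depending on $k$: the existing results (Greenwood--Novikov, Denisov--Wachtel) only give a prefactor $c_\gamma\in(0,\infty)$, not $c_\gamma=1$, and the proof here requires a bespoke construction (sandwiching by a piecewise-constant boundary with a finite number of jumps, applying Doney's uniform estimate on each constant stretch, and showing the position of the walk at the jump times concentrates where the error terms vanish). Without this, your sandwich does not close: neither a Dwass/cyclic-lemma identity nor a naive perturbation of the quasi-binomial formula from~\cite{Sloothaak2016} obviously yields the matching constant.

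Two further points. First, your perturbation estimate $L_m(s)=\frac{\theta+s-1}{m}(1+O(k^{\alpha'}/m))$ by ``telescoping'' is not what the paper establishes and is not obviously correct or strong enough. What the argument actually needs is a bound on the cumulative drift $Y_{i,m}=\sum_{j\le i}(1-L_{j,m})$ of the form $|Y_{i,m}|\le i^\gamma$ for some $\gamma\in(\alpha/2,1/2)$, uniformly over $i\le k$ with probability $1-o(m^{-1/2})$ (Proposition~\ref{prop:NoExceedanceOfLs}). Proving this is itself delicate: the increments $1-L_{j,m}$ are neither independent nor bounded, and the paper handles them by writing $L_{i,m}$ via a Bernoulli$(\pi_i)$ representation and running a conditional Chernoff argument on the event that all $\pi_i$ stay close to one. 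Second, the conditioning on the bridge endpoint $S_{m+1}=0$ needs a separate treatment (the final block of the paper's proof of Proposition~\ref{prop:SublinearK}, using the local CLT uniform density convergence and a Chernoff bound on $\Prob(S_k>\epsilon\sqrt m)$); you fold this silently into ``a random-walk bridge staying below a line'' without addressing why the conditioning preserves the unconditional tail. In short, your outline identifies the right reduction, but the two hardest steps --- the first-passage asymptotic over a $k$-dependent moving boundary with the correct prefactor, and the conditional drift bound --- are each nontrivial and neither is supplied.
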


Second, we use this result to derive an upper bound for the failure size tail. Trivially, the failure size is bounded by the number of edges that are contained in the giant component upon failure plus all the edges that exist outside the giant after the cascade has stopped. We introduce the stopping time
\begin{align}
\upsilon(k) = \min\{j \in \mathbb{N} : j + |\tilde{E}_m(j)| \geq k \},\label{eq:upsilonDefinition}
\end{align}
the minimum number of edges that need to be removed uniformly at random for the number of edges outside the giant to exceed $k-\upsilon(k)$. In other words, after we have removed $\upsilon(k)$ edges uniformly at random, the sum of the number of edges outside the giant and the number of removed edges exceeds $k$. The number of edge removals in the giant is given by $\kappa(\upsilon(k))$, and hence
\begin{align*}
\left\{A_{n,\mathbf{d}} \geq k  \right\} \subseteq \left\{\hat{A}_{n,\mathbf{d}} \geq \kappa(\upsilon(k)) \right\}.
\end{align*}
We prove that $\upsilon(k) = k - o(k)$ with sufficiently high probability, and hence
\begin{align*}
\Prob\left(A_{n,\mathbf{d}} \geq k  \right) \leq \Prob\left(\hat{A}_{n,\mathbf{d}} \geq \kappa(\upsilon(k)) \right) \sim \frac{2\theta}{\sqrt{2\pi}} \left(k-o(k)\right)^{-1/2} \sim \frac{2\theta}{\sqrt{2\pi}} k^{-1/2} .
\end{align*}

Third, we derive a lower bound. We note that $\hat{A}_{n,\mathbf{d}} \leq A_{n,\mathbf{d}}$, and hence
\begin{align*}
\left\{A_{n,\mathbf{d}} \geq k  \right\} \supseteq \left\{\hat{A}_{n,\mathbf{d}} \geq k\right\} = \left\{\hat{A}_{n,\mathbf{d}} \geq \kappa(\varrho(k) ) \right\},
\end{align*}
where
\begin{align}
\varrho(k) = \min\{j \in \mathbb{N} : \kappa(j) \geq k \}.\label{eq:varrhoDefinition}
\end{align}
That is, $\varrho(k)$ is the number of edges that need to be removed uniformly at random for $k$ failures to have occurred in the giant. We show that that $\varrho(k)=k+o(k)$ with sufficiently high probability, and hence
\begin{align*}
\Prob\left(A_{n,\mathbf{d}} \geq k  \right) \geq \Prob\left(\hat{A}_{n,\mathbf{d}} \geq \kappa(\varrho(k)) \right) \sim \frac{2\theta}{\sqrt{2\pi}} \left(k+o(k)\right)^{-1/2} \sim \frac{2\theta}{\sqrt{2\pi}} k^{-1/2} .
\end{align*}
Since the upper and lower bounds coincide, this yields Theorem~\ref{thm:MainResultSublinear}. We formalize this in Section~\ref{sec:ProofOfMainResultRG}.

\section{Disintegration of the network}\label{sec:Disintegration}
As explained in the previous section, the cascading failure process can be decoupled in two elements of randomness. In this section, we study the first element of randomness: the disintegration of the network as we sequentially remove edges uniformly at random. In view of Theorem~\ref{thm:MainResultSublinear}, our main focus is the case where we remove only $O(m^{1-\delta})$ edges with $\delta \in (0,1)$. In particular, our goal is to prove Theorems~\ref{thm:Firstdisconnection}-\ref{thm:DeviationsOutsideGiant}. 

This section is structured as follows. First, we show in Section~\ref{sec:PercolationSequentialRelation} that the sequential removal process is well-approximated by a process where we remove each edge independently with a certain probability, also known as percolation. This is a well-studied process in the literature, and particularly in case of the configuration model~\cite{Jan09b}. In Section~\ref{sec:ExplosionAlg}, we provide an alternative way to construct a percolated configuration model by means of an algorithm as developed in~\cite{Jan09b}. This alternative construction allows for simpler analysis, and is used in Section~\ref{sec:TypicalStructuresPercolation} to show that for the percolated (connected) configuration model, typically, the components outside the giant component are either isolated nodes, line components or possibly cycle components. In Section~\ref{sec:LargeDeviationBoundsBeyondTypicalSizes}, we derive large deviations bounds on the number of edges outside the giant. We prove Theorem~\ref{thm:Firstdisconnection} in Section~\ref{sec:FirstDisconnections}, and in addition, we provide a large deviations bound for the number of edges that need to be removed for the connected configuration model to become disconnected. We prove Theorems~\ref{thm:FailureSublinear} and~\ref{thm:DeviationsOutsideGiant} in Section~\ref{sec:NumberOfEdgesOutsideTheGiant}. Although these sections focus on the (connected) configuration model under a sublinear number of edge removals, we briefly recall results known in the literature involving the typical behavior of the random graph beyond the sublinear window in Section~\ref{sec:LinearEdgeFailures}. This will be important to obtain intuition of what happens to the failure size behavior beyond the sublinear case, the topic of interest in Section~\ref{sec:UniversalityPrinciple}.

\subsection{Percolation on the connected configuration model}\label{sec:PercolationSequentialRelation}
To prove our results regarding the sequential removal process, we relate this process to another one where each edge is removed independently with a certain removal probability, also known as percolation. This is illustrated by the following lemma.

\begin{lemma}\label{lem:CascadePercolation}
	Let $G=(V,E)$ be a graph, and write $E'(G(q))$ as the set of edges that have been removed by percolation with parameter $q \in [0,1]$, and $\tilde{E}'(i)$ as the set of edges when $i$ edges are removed uniformly at random. It holds for every $i \in [m]$, $q\in [0,1]$, and $B\subseteq E$ such that $|B|=i$,
	\begin{align}\label{eq:iUniform}
	\Prob(\tilde{E}'(i)=B)=\Prob\left(E'(G(q))=B \;\big|\; |E'(G(q))|=i \right).
	\end{align}
	Moreover, if $q=q_m=\omega(m^{-1})$, then as $m \rightarrow \infty$,
	\begin{align}\label{eq:RemovedEdges}
	\frac{|E'(G(q))|}{qm}\overset{\mathbb{P}}{\rightarrow}1.
	\end{align}
\end{lemma}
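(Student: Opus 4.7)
The plan is to prove the two claims of the lemma separately, both by short symmetry/concentration arguments.

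For the first identity~\eqref{eq:iUniform}, I would exploit exchangeability. Under sequential uniform removal, the ordered sequence of removed edges is uniform over all $i$-permutations of $E$, so the unordered set $\tilde E'(i)$ is uniform over the $\binom{m}{i}$ subsets of $E$ of size $i$; in particular, for any fixed $B \subseteq E$ with $|B|=i$,
\begin{align*}
\Prob(\tilde E'(i) = B) = \binom{m}{i}^{-1}.
\end{align*}
On the percolation side, the events $\{e \in E'(G(q))\}$ are independent Bernoulli$(q)$, so for any fixed subset $B$ of size $i$,
\begin{align*}
\Prob(E'(G(q)) = B) = q^{i}(1-q)^{m-i},
\end{align*}
which depends on $B$ only through $|B|$. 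Summing over all $\binom{m}{i}$ subsets of size $i$ gives the conditioning event probability, and dividing yields $\binom{m}{i}^{-1}$. The two expressions match, which establishes~\eqref{eq:iUniform}.

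For the concentration statement~\eqref{eq:RemovedEdges}, I would simply note that $|E'(G(q))|$ is a sum of $m$ independent Bernoulli$(q)$ indicators, hence $|E'(G(q))| \sim \mathrm{Bin}(m,q)$ with mean $mq$ and variance $mq(1-q) \leq mq$. Chebyshev's inequality then gives, for any $\varepsilon > 0$,
\begin{align*}
\Prob\Bigl(\bigl| |E'(G(q))| - mq \bigr| > \varepsilon mq\Bigr) \leq \frac{mq(1-q)}{\varepsilon^{2}(mq)^{2}} \leq \frac{1}{\varepsilon^{2} mq}.
\end{align*}
Under the assumption $q = \omega(m^{-1})$, the right-hand side tends to zero, so $|E'(G(q))|/(mq) \pto 1$.

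There is no real obstacle: the first claim is a one-line symmetry observation (both distributions are uniform on subsets of size $i$), and the second is a textbook second-moment argument on a binomial. The only point worth being careful about is the edge case $q=0$ (handled trivially) and making sure the conditioning on $\{|E'(G(q))|=i\}$ is well-defined, which holds whenever $0<q<1$ and $0\leq i\leq m$; the identity~\eqref{eq:iUniform} in the boundary cases $q\in\{0,1\}$ can be treated separately or bypassed since only $q\in(0,1)$ is needed in subsequent applications.
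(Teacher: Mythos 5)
Your proof is correct and follows essentially the same route as the paper: show both sides of \eqref{eq:iUniform} equal $\binom{m}{i}^{-1}$ by a uniformity/exchangeability argument for the sequential process and an explicit computation of the Bernoulli product for percolation, then conclude \eqref{eq:RemovedEdges} from concentration of $\mathrm{Bin}(m,q)$ when $qm\to\infty$. Your Chebyshev step simply makes explicit what the paper leaves as ``concentration of the binomial random variable.''
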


Lemma~\ref{lem:CascadePercolation} is a direct consequence of the concentration of the binomial random variable, and the proof is given in Appendix~\ref{app:PercolationResults}. This lemma establishes that sequentially removing $i:=i_m = \omega(1)$ edges uniformly at random is well-approximated by a percolation process with removal probability $q=i/m$. In particular, this holds for the (connected) configuration model. This allows us to study many questions involving the connected configuration model subject to uniformly removing edges in the setting of percolation. The study of percolation processes on finite (deterministic or random) graphs is a well-established research field, which dates back to the work of Gilbert~\cite{Gil59} and is still very active these days. In particular, percolation on the configuration model is a fairly well-understood process, as established by Janson in~\cite{Jan09b}.

It is known that there exists a critical parameter $q_c := 1- \E [D]/\E[D(D-1)]$ such that if $q < q_c$, then the largest component of the percolated (connected) configuration model contains a non-vanishing proportion of the vertices and edges~\cite{MolRee95,Jan09b}. In particular, it is implied by the formula for $q_c$ that in order for a phase transition to appear, it is necessary that $\E[D(D-1)]/\E[D] \in (1, \infty)$. If $\E[D(D-1)]/\E[D] \leq 1$, then the largest component already has a sublinear size in $n$ even before percolation, while if $\E[D(D-1)]/\E[D] = \infty$, then there exists a connected component of linear size in the percolated graph for every $q \in (0,1)$. Typically, the (w.h.p.~unique) component of linear size is referred to as the \emph{giant component}. All other components are likely to be much smaller, i.e.~of size $\Op( \log n)$ under some regularity conditions on the limiting degree sequence~\cite{MolRee95}. If $q\geq q_c$, there is no giant component in the percolated graph and all the components have sublinear size~\cite{Jan09b}. 

In order to derive Theorem~\ref{thm:MainResultSublinear}, these results in the literature are not sufficient. In particular, in view of Theorems~\ref{thm:Firstdisconnection} and~\ref{thm:FailureSublinear}, a more detailed description of the network structure is needed in case that $q=o(m)$. A critical tool to derive the results in this setting is the \textit{explosion algorithm}, as we describe next.

\subsection{Explosion algorithm}\label{sec:ExplosionAlg}
Key to prove Theorems~\ref{thm:Firstdisconnection} and~\ref{thm:FailureSublinear} is the \textit{explosion algorithm}, designed by Janson in~\cite{Jan09b}. This algorithm prescribes that instead of applying percolation on $\CMd$ with removal probability $q$, we can run the procedure as described in Algorithm~\ref{alg:ExplosionAlgorithm}.

\begin{center}
	\begin{algorithm}
		\KwIn{A set of $n$ vertices, such that for every $j \in [n]$ the vertex $v_j$ has $d_j$ half-edges attached to it according to the degree sequence $\mathbf{d}$.}
		\KwOut{Graph $CM_n(\mathbf{d},q)$.}
		\begin{enumerate}
			\item Remove each half-edge independently with probability $( 1-( 1-q)^{1/2})$. Let $R_n$ be the number \\of removed half-edges.
			\item Add $R_n$ degree-one vertices to the vertex set. Define the new degree sequence as $\mathbf{d}'$ with \\$N=n+R_n$ vertices.
			\item Pair $CM_N(\mathbf{d}')$.
			\item Remove $R_n$ vertices of degree 1 from $CM_N(\mathbf{d}')$ uniformly at random.
		\end{enumerate}
		\caption{Explosion algorithm~\cite{Jan09b}.}
		\label{alg:ExplosionAlgorithm}
	\end{algorithm}
\end{center}

Janson proved in~\cite{Jan09b} that the algorithm produces a random graph that is statistically indistinguishable from $CM_n(\mathbf{d},q)$, the graph obtained by removing every edge in (not necessarily connected) $CM_n(\mathbf{d})$ with probability $q \in [0,1]$. Yet, the graph obtained by the explosion method is significantly easier to study as it is simply a configuration model with a new degree sequence and a couple of vertices of degree~$1$ that have been removed, where the latter operation does not significantly change the structure of the graph. Since the graphs obtained from the percolation process and the explosion method are identically distributed, we use the denomination $CM_n(\mathbf{d},q)$ both for the configuration model after percolation and for the graph we obtain via the explosion algorithm. 

\begin{remark}\label{rem:NonVanishingProbabilityInitiallyConnected}
	The observant reader may notice that the explosion algorithm is designed for the configuration model that is not necessarily initially connected. Fortunately, as established in~\cite{Federico2016}, connectivity has a non-vanishing probability to occur under Condition~\ref{con:RegularConfiguration} as $n \to \infty$. This is a crucial observation, since it implies that certain events that happen w.h.p. on $CM_n(\mathbf{d})$ also happen w.h.p. on $\overline{CM}_n(\mathbf{d})$. More specifically, for a sequence of events $(A_n)_{n \in \mathbb{N}}$,
	\begin{align}\label{eq:ConnectedConditioning}
	\Prob (A_n \mid  \CMd \text{ is connected})\leq \frac{\Prob (A_n)}{ \Prob( \CMd \text{ is connected})},
	\end{align}
	where under Condition~\ref{con:RegularConfiguration}~\cite{Federico2016},
	\begin{align*}
	\liminf_{n \to \infty} \Prob( \CMd \text{ is connected})>0.
	\end{align*}
	In particular, this implies that if for some sequence of random variables $(X_n)_{n \in \mathbb{N}}$ it holds that $X_n \pto c$ for some constant $c \in \mathbb{R}$ in $CM_n(\mathbf{d})$, then the same statement holds for the graph $\overline{CM}_n(\mathbf{d})$. Similarly, if $X_n = \op(a_n)$ for some sequence $(a_n)_{n \in \mathbb{N}}$ in $CM_n(\mathbf{d})$, then this is also true for $\overline{CM}_n(\mathbf{d})$.
\end{remark}

Next, we point out some properties we use extensively in this section. First, we observe that if $q=o(1)$, then by Taylor expansion, the removal probability in the explosion algorithm satisfies $1-(1-q)^{1/2} = (q/2)(1+o(1))$. Therefore, the probability of a vertex of degree $l$ to retain $j$ half-edges satisfies
\begin{align}
p_{l,j} = \binom{l}{j} \left( (1-q)^{-1/2}\right)^j\left( 1-(1-q)^{1/2}\right)^{l-j} = \binom{l}{j} (q/2)^{l-j}(1+o(1))
\end{align}
if $q=o(1)$ and $j \leq l$. Moreover, let $n_{l,j}$ represent the number of vertices of degree $l$ that retain $j$ half-edges. That is, $n_{l,j}$ are random variables with distribution $n_{l,j} \overset{d}{=} Bin(n_l,p_{l,j})$. Due to Markov's inequality, it holds that
\begin{align}\label{eq:NljMarkovInequality}
\Prob(n_{l,j} >0) \leq n_l p_{l,j}.
\end{align}
Moreover,
\begin{align}
\begin{array}{ll}
n_{l,j} \overset{d}{\rightarrow} \textrm{Poi}(a) & \textrm{if } n_l p_{l,j} \rightarrow a \in (0,\infty), \\
n_{l,j} = n_l p_{l,j}(1+o_{\Prob}(1)) & \textrm{if } n_l p_{l,j} \rightarrow \infty,
\end{array}\label{eq:NljConvergences}
\end{align}
due to the Poisson limit theorem and the law of large numbers, respectively. Note that 
\begin{align}\label{eq:HighToZeroDegreeVertices}
\E\left(\sum_{l=h}^\infty n_{l,0}\right) = \sum_{l=h}^\infty n_{l}p_{l,0} \leq n \frac{(q/2)^h}{1-q/2}(1+o(1)) = O(n q^h), \hspace{1cm} h\geq 2.
\end{align}
By Markov's inequality, it holds for every $\epsilon >0$
\begin{align}
\Prob\left( \sum_{l=h}^\infty n_{l,0} > \epsilon \right) = O(n q^h), \hspace{1cm} h\geq 2.
\label{eq:ZeroDegreeNodeSumBoundExplosionAlg}
\end{align}
Similarly, it follows that for all $q=o(1)$,
\begin{align}
\E\left( \sum_{l=h}^\infty n_{l,1} \right) \leq n \sum_{l=h}^\infty l (q/2)^{l-1} (1+o(1)) = O(n q^{h-1}), \hspace{1cm} h\geq 2,
\end{align}
and hence for every $\epsilon>0$,
\begin{align}
\Prob\left( \sum_{l=h}^\infty n_{l,1} >\epsilon \right) = O(n q^{h-1}), \hspace{1cm} h\geq 2,
\label{eq:OneDegreeNodeSumBoundExplosionAlg}
\end{align}

Finally, we observe that for every $1/m \ll q \ll 1$, by the law of large numbers,
\begin{align}
R_n = 2m(1-(1-q)^{-1/2})(1+\op(1)) = \frac{nd}{2} q (1+\op(1))\label{eq:RnLLN}
\end{align}

\subsection{Typical structure of the percolated configuration model}\label{sec:TypicalStructuresPercolation}
We recall that our focus is on the case where the number of edges that are removed from the giant is of order $o(m)$. In view of Lemma~\ref{lem:CascadePercolation}, this number is well-approximated by the number of edge removals in a percolation process with removal probability $q=o(1)$. In particular, in this regime there is a unique giant component w.h.p.~and other components are likely to be much smaller, i.e.~w.h.p.~the number of vertices and edges ourside the giant is of order $o(m)$. Even more can be said about the structure of these components. In this section, we show in that these components are typically isolated nodes, line components or potentially isolated circles. More complex structures are relatively rare. 

\begin{remark}\label{rem:CanAlsoLookAtExplodedCMd}
	Remark~\ref{rem:NonVanishingProbabilityInitiallyConnected} implies that often it suffices to consider $CM_n(\mathbf{d})$ to prove an analogous result for $\overline{CM}_n(\mathbf{d})$. Moreover, the explosion algorithm is used to construct $CM_n(\mathbf{d},q)$ from $\Cmd$ by the removal of $R_n$ degree-one vertices, and hence we often also focus on $\Cmd$ in this section. We point out that the operation of removing degree-one vertices does not affect the connectivity of a component. Moreover, the probability that the giant component in $\Cmd$ is not unique is exponentially small~\cite{MolRee95}. If $q=o(1)$, then the probability for $R_n$ not to be of sublinear size is also exponentially small. Therefore, the giant component in $\Cmd$ remains the giant component in $CM_n(\mathbf{d},q)$ with extremely high probability, i.e. the probability that the complement is true has an exponentially decaying rate. Therefore, the number of edges outside the giant in $\Cmd$ provides an upper bound (with extremely high probability) for the number of edges outside the giant in $CM_n(\mathbf{d},q)$. Moreover, since line components outside the giant in $\Cmd$ either remain line components or become isolated nodes after the removal of degree-one vertices, the number of edges in $CM_n(\mathbf{d},q)$ outside the giant that are not contained in line components is bounded from above (with extremely high probability) by the same quantity in $\Cmd$.
\end{remark}

First, we explore the degree sequence~$\mathbf{d}'$ of $\Cmd$ in the explosion method from Algorithm~\ref{alg:ExplosionAlgorithm}. Analogously to the notation for the original degree sequence $\mathbf{d}$, we write $n_i'$ for the number of vertices of degree~$i$ in $\mathbf{d}'$ and $p_i' := \lim_{n \rightarrow \infty} n_i'/n$ as the limiting fraction.

\begin{lemma}\label{lem:NewDegreeDistribution}
	Consider the explosion algorithm from Algorithm~\ref{alg:ExplosionAlgorithm} with initial graph $\CMd$ satisfying Condition~\ref{con:RegularConfiguration} and~$q=i/m$ with $1\ll i\ll m$. The degree sequence $\mathbf{d}'$ after explosion satisfies the following properties. For the number of vertices of degree zero,
	\begin{align}\label{eq:n0Prime}
	\begin{array}{ll}
	\Prob(n'_0\neq 0)= O(i^2/m), & \textrm{ if } i\ll m^{1/2},\\
	n'_0 \dto Poi\Big(\frac{c^2p_2}{2d}\Big), & \textrm{ if } i=c\sqrt m,\\
	n'_0= \frac{p_2}{2d} \frac{i^2}{m} (1+\op(1)), & \textrm{ if }  \sqrt{m} \ll i \ll m.
	\end{array}
	\end{align}
	The number of degree-one vertices in $\Cmd$ satisfies
	\begin{align}
	n'_1 &=  \left( i+ \frac{2 p_2}{d} i\right)(1+\op (1)).
	\end{align} 
	Finally, the fraction of vertices of degree $k\geq 2$ in $\Cmd$ converges to the limiting fraction of degree-$k$ vertices in $\CMd$, i.e.
	\begin{align}
	p'_k &=p_k, \hspace{0.5cm} \textrm{ for all } k \geq 2.
	\end{align}
\end{lemma}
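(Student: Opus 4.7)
The plan is to analyze the degree sequence produced in steps 1--2 of the explosion algorithm, using the fact that a vertex of original degree $l$ independently keeps each of its half-edges with probability $(1-q)^{1/2}$. Consequently $n_{l,j}\sim \mathrm{Bin}(n_l,p_{l,j})$ with the $p_{l,j}$ displayed before \eqref{eq:NljMarkovInequality}, independently across $l$, and the new degree counts decompose as
\begin{align*}
n_0' = \sum_{l\geq 2} n_{l,0}, \qquad n_1' = R_n + \sum_{l\geq 2} n_{l,1}, \qquad n_k' = \sum_{l\geq k} n_{l,k} \quad (k\geq 2),
\end{align*}
using $n_0=n_1=0$ and the fact that step~2 appends exactly $R_n$ new degree-one vertices. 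All three claims then reduce to computing means of binomials and invoking a suitable limit law (Markov, Poisson, or law of large numbers) case by case, and to bounding the contribution of vertices of degree $\geq 3$ using the already established estimates \eqref{eq:ZeroDegreeNodeSumBoundExplosionAlg} and \eqref{eq:OneDegreeNodeSumBoundExplosionAlg}.

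For $n_0'$, the dominant term is $n_{2,0}\sim \mathrm{Bin}(n_2,(1-(1-q)^{1/2})^2)$, whose mean equals $n_2 (q/2)^2(1+o(1)) = \tfrac{p_2\,i^2}{2dm}(1+o(1))$ after substituting $n_2 = p_2 n(1+o(1))$, $n = 2m/d \,(1+o(1))$, and $q=i/m$. The three regimes in \eqref{eq:n0Prime} then follow from Markov's inequality when this mean vanishes ($i\ll\sqrt m$), the Poisson limit theorem when the mean converges to the positive constant $c^2 p_2/(2d)$ ($i=c\sqrt m$), and a law of large numbers when the mean diverges ($i\gg\sqrt m$). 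The higher-degree remainder $\sum_{l\geq 3}n_{l,0}$ has expectation $O(nq^3)=O(i^3/m^2)$ by \eqref{eq:ZeroDegreeNodeSumBoundExplosionAlg} with $h=3$, which is $o(i^2/m)$ and also $o(m^{-1/2})$ when $i\ll \sqrt m$, so it never affects the stated leading order in any regime.

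For $n_1'$, I combine $R_n = i(1+\op(1))$ from \eqref{eq:RnLLN} with the analysis of $n_{2,1}\sim \mathrm{Bin}(n_2, q(1+o(1)))$, whose mean $p_2 nq(1+o(1)) = \tfrac{2p_2}{d}\,i(1+o(1))\to\infty$ yields a LLN; the higher-degree tail $\sum_{l\geq 3} n_{l,1}$ has expectation $O(nq^2)=O(i^2/m) = o(i)$ by \eqref{eq:OneDegreeNodeSumBoundExplosionAlg} with $h=3$, so summing gives $n_1' = (i + 2p_2 i/d)(1+\op(1))$. For $p_k'$ with $k\geq 2$, I split $n_k' = n_{k,k} + \sum_{l>k}n_{l,k}$; the dominant term $n_{k,k}\sim\mathrm{Bin}(n_k,(1-q)^{k/2})$ has mean $n_k(1+o(1))$ and concentrates, while higher-degree contributions sum to $\op(n)$ by the same binomial tail bounds, and dividing by $N = n + R_n = n(1+\op(1))$ yields $p_k' \to p_k$.

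The main obstacle is making the super-$\sqrt m$ law of large numbers for $n_{2,0}$ sufficiently tight to preserve the explicit prefactor $p_2/(2d)$, and simultaneously verifying that the higher-degree remainders do not contaminate the leading constant uniformly across the three regimes; this amounts to a careful Chebyshev-type variance bound applied to the binomials involved. One additional bookkeeping point is that Algorithm~\ref{alg:ExplosionAlgorithm} is run on $CM_n(\mathbf{d})$ rather than on $\overline{CM}_n(\mathbf{d})$, but by Remark~\ref{rem:NonVanishingProbabilityInitiallyConnected} and \eqref{eq:ConnectedConditioning} every convergence-in-probability and convergence-in-distribution statement obtained for $CM_n(\mathbf{d})$ transfers to the conditioned model at no extra cost.
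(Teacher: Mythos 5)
Your proposal is correct and follows essentially the same route as the paper: decompose each $n_j'$ into the binomial contributions $n_{l,j}$, identify the degree-two term as dominant via the Taylor expansion $1-(1-q)^{1/2}=(q/2)(1+o(1))$, and apply Markov's inequality, the Poisson limit theorem, or the law of large numbers according to whether the mean $\frac{p_2 i^2}{2dm}$ vanishes, converges, or diverges, with the higher-degree remainders controlled by \eqref{eq:HighToZeroDegreeVertices}--\eqref{eq:OneDegreeNodeSumBoundExplosionAlg} and combined with $R_n=i(1+\op(1))$ from \eqref{eq:RnLLN}. The only genuine divergence is in the third claim: where you decompose $n_k'=n_{k,k}+\sum_{l>k}n_{l,k}$ and invoke Chebyshev concentration of the leading term, the paper instead uses the one-line sandwich $n_l-R_n\leq n_l'\leq n_l+R_n$ (each removed half-edge changes exactly one vertex's degree) together with $R_n=\Op(i)=\op(n)$, which is shorter but gives the same conclusion. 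Your closing remark about transferring to $\overline{CM}_n(\mathbf{d})$ via Remark~\ref{rem:NonVanishingProbabilityInitiallyConnected} is harmless but unnecessary here, since the lemma is stated for the unconditioned $\CMd$.
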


\begin{proof}
	Recall that if $q = o(1)$, then 
	\begin{align*}
	1-(1-q)^{1/2} &=(q/2)(1+o(1))=(i/(2m))(1+o(1)),\\
	p_{l,j} &=\binom{l}{j}(q/2)^{l-j}(1+o(1)),
	\end{align*}
	and $n_{l,j}\overset{d}{=}Bin(n_l,p_{l,j})$. Using~\eqref{eq:NljConvergences}, we obtain
	\begin{align*}
	n_{2,0}= \frac{p_2i^2}{2dm}(1+\op(1)) \text{ if } m^{1/2} \ll i \ll m, \quad  n_{2,0} \dto Poi\Big(\frac{c^2p_2}{2d}\Big) \text{ if } i=c\sqrt m .
	\end{align*}
	By \eqref{eq:HighToZeroDegreeVertices} we know that in both cases, these are the only leading-order contributions to $n_0'$, since~$n_0'=\sum_{h=2}^\infty n_{l,0}$ with $\sum_{h=3}^\infty n_{l,0}= \Op(n i^3/m^2)=\op(i^2/m)$.
	From~\eqref{eq:ZeroDegreeNodeSumBoundExplosionAlg}, we obtain that
	\begin{align*}
	\Prob\left( n_0'\neq 0\right) = \Prob\left( \sum_{l=2}^\infty n_{l,0}\neq 0 \right)=O(i^2/m), 
	\end{align*}
	if $i \ll \sqrt m$. Similarly, it follows from~\eqref{eq:NljConvergences},~\eqref{eq:OneDegreeNodeSumBoundExplosionAlg} and~\eqref{eq:RnLLN},
	\begin{align*}
	n'_1 =(R_n+ n_{2,1})(1+\op(1))=  \left( i+ \frac{2 p_2}{d} i\right)(1+\op (1)).
	\end{align*}
	Finally, we note that, since every removal of a half-edge in the first step of the explosion algorithm changes the degree of one vertex,
	\begin{align*}
	n_l-R_n\leq n'_l \leq n_l+R_n,
	\end{align*}
	with $R_n=\Op(i)=\op(n)$ and hence $p'_l \pto p_l$ for all $l \geq 2$.
\end{proof}

We use the degree sequence $\mathbf{d}'$ to study the structure of the components outside the giant in $\overline{CM}_n(\mathbf{d},q)$. Again, we will show that these components are either isolated nodes, line components or possibly isolated cycles. More complex structures are rather unlikely to appear as the network disintegrates. 

We begin by proving a bound on the number of edges belonging to components that have a more complex structure, i.e.~contain a vertex of degree at least three when $q\ll m^{-\delta}$ for some $\delta > 0$. We show that this is not the leading-order term for the number of edges outside the giant, since the number of lines and isolated vertices is much larger. For a graph $G$, define $d_{\max}(G)$ as the largest degree of any vertex in $G$ and $E(G)$ as the edge set of $G$. For every edge $e$ and vertex $v$, write $\mathcal C (e)$ and $\mathcal C(v)$ for the connected component that contains $e$ or $v$, respectively. Finally, let $\Cmax$ denote the largest component in a graph $G$.
\begin{proposition}
	Consider the graphs $\overline{CM}_n(\mathbf{d},q)$, $CM_n(\mathbf{d},q)$ and $\Cmd$ with $q=i/m$, where $m^\delta \ll i \ll m^{1-\delta}$ for some $\delta >0$. For all three graphs, if $i = o(\sqrt{m})$, then
	\begin{align}
	\Prob(\#\{ e \notin \Cmax\colon d_{\max}(\mathcal C(e)) \geq 3\}\neq 0)= o(i^2/m).
	\label{eq:OtherStuffLessThanSquareRoot}
	\end{align}
	For both graphs, if $i \gg \sqrt{m}$, then
	\begin{align}\label{eq:OtherStuffMoreThanSquareRoot}
	\# \{ e \notin \Cmax\colon d_{\max}(\mathcal C(e)) \geq 3\} = \op\left(i^2/m\right).
	\end{align}
	\label{prop:NumberOfOtherStuffOutsideGiant}
\end{proposition}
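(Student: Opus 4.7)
The plan is to reduce the problem to the graph $\Cmd$ produced by the explosion algorithm and bound the expected number of ``bad'' edges there using a branching-process / exploration argument. First, by Remark~\ref{rem:NonVanishingProbabilityInitiallyConnected}, conditioning on connectivity is a non-vanishing event, so an upper bound on the relevant expectation in $\CMD$ suffices for $\overline{CM}_n(\mathbf{d},q)$ as well. Moreover, by Remark~\ref{rem:CanAlsoLookAtExplodedCMd}, removing $R_n$ degree-one vertices from $\Cmd$ can only decrease both component sizes and vertex degrees, so every non-giant component of $\CMD$ containing a vertex of degree at least $3$ arises from a (possibly larger) non-giant component of $\Cmd$ retaining the same distinguished vertex of degree $\geq 3$. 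It therefore suffices to prove the analogous bound in $\Cmd$.

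The core estimate I aim for is: for every vertex $v\in \Cmd$ of degree $k\geq 3$,
\begin{equation*}
  \E\bigl[|E(\mathcal{C}(v))|\,\indi\{v\notin \Cmax\}\bigr]=O(q^k).
\end{equation*}
To prove this, I would run the standard half-edge component exploration from $v$, with initial pool size $k$ and pairing half-edges one at a time with a uniformly chosen remaining half-edge. The pool dynamics mirror a Galton--Watson tree with size-biased offspring probabilities $\hat p_{k-1}=k n_k'/(2m)$. Lemma~\ref{lem:NewDegreeDistribution} gives $\hat p_0=n_1'/(2m)=O(q)$ and $\hat p_1\to 2p_2/d\in(0,1)$, while supercriticality ensures that $\sum_{k\geq 2}\hat p_k$ is bounded away from $0$. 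The per half-edge extinction probability $\zeta$ solves $\zeta=\hat p_0+\hat p_1\zeta+O(\zeta^2)$, so $\zeta = \hat p_0\cdot d/(d-2p_2)(1+o(1))=O(q)$. Since $v$ spawns $k$ asymptotically independent explorations, $\Prob(v\notin \Cmax)\leq C\zeta^k=O(q^k)$, and conditional on extinction the component is dominated by a subcritical tree with geometric path lengths, giving $\E[|E(\mathcal{C}(v))|\mid v\notin \Cmax]=O(1)$.

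Summing over all vertices of degree at least $3$ (which is at most $n$ of them), and harmlessly overcounting components with several internal vertices, yields
\begin{equation*}
  \E\bigl[\#\{e\notin\Cmax\colon d_{\max}(\mathcal C(e))\geq 3\}\bigr]\leq \sum_{v\colon d(v)\geq 3} Cq^{d(v)} = O(nq^3)=O(i^3/m^2).
\end{equation*}
Markov's inequality then delivers both halves of the proposition at once. For $i\ll m^{1/2}$ one has $\Prob(\#\neq 0)\leq O(i^3/m^2)=o(i^2/m)$, which is~\eqref{eq:OtherStuffLessThanSquareRoot}; for $i\gg m^{1/2}$ one gets $\#/(i^2/m)=\Op(i/m)=\op(1)$, which is~\eqref{eq:OtherStuffMoreThanSquareRoot}.

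The main obstacle will be making the branching-process comparison rigorous at the expectation level. The naive coupling with a Galton--Watson tree is accurate only while the explored vertex set has size $o(\sqrt n)$; to convert this into a genuine upper bound on $\Prob(v\notin\Cmax)$ one can truncate the exploration at a suitable threshold and exploit supercriticality to argue that any exploration surviving past that threshold meets the giant with overwhelming probability, or alternatively invoke the explicit small-component estimates for percolated configuration models from~\cite{Jan09b,MolRee95}. Either route requires quantifying the perturbations introduced by the extra degree-one vertices and the $\Op(i^2/m)$ degree-zero vertices from Lemma~\ref{lem:NewDegreeDistribution}, and checking that rarer component topologies (unicyclic components, or components with several degree-$\geq 3$ vertices joined by long paths) contribute only lower-order terms $o(nq^3)$.
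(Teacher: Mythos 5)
Your reduction to $CM_N(\mathbf{d}')$ via Remarks~\ref{rem:NonVanishingProbabilityInitiallyConnected} and~\ref{rem:CanAlsoLookAtExplodedCMd} is sound, and the paper makes the same move. The gap is in the core estimate. You claim $\Prob(v\notin \Cmax)\leq C\zeta^k = O(q^k)$ for a degree-$k$ vertex, taking $\zeta$ to be the per-half-edge extinction probability of a Galton--Watson tree with $\hat p_0 = O(q)$. But the branching-process extinction probability only captures explorations that die by repeatedly hitting degree-one vertices. In the actual configuration-model exploration there is a second way for the active pool to drop: the newly sampled half-edge can itself be active (a self-collision), which costs probability $\Theta(1/m)$ per step, \emph{independent} of $q$. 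The paper's exploration argument partitions the ways the pool (which starts at $\geq 3$) can reach zero into events $F_1$ (three degree-one encounters), $F_2$ (one degree-one encounter and one collision), and $F_3$ (two collisions), with $\Prob(F_2)=O(q\log^2 m/m)$ and $\Prob(F_3)=O(\log^2 m/m^2)$. For a degree-$3$ vertex, $\Prob(v\notin\Cmax)$ is therefore of order $q^3\log^3 m + q\log^2 m/m + \log^2 m/m^2$, and the $F_2$ term \emph{dominates} $q^3$ exactly in the regime $i=o(\sqrt m)$ that your first claim~\eqref{eq:OtherStuffLessThanSquareRoot} is about: the ratio $q^3/(q/m) = i^2/m = o(1)$ there. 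Your closing remark that unicyclic topologies contribute only $o(nq^3)$ is therefore false; after summing over $\Theta(n)$ starting vertices they contribute $\Theta(i\log^2 m/m)$, which exceeds $nq^3 = \Theta(i^3/m^2)$ whenever $i\ll\sqrt m\,\log m$.

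As it happens, the true expectation $O\big(i\log^3 m/m + i^3\log^4 m/m^2\big)$ is still $o(i^2/m)$ once one uses $i\gg m^\delta$ to swallow the logarithms, so the \emph{conclusion} of the proposition survives — but not via the bound $O(nq^3)$, and the proof must account for the collision events explicitly (or otherwise carry along the $O(1/m)$ per-step cycle probability). That is the essential step your proposal is missing; the paper's $F_1,F_2,F_3$ decomposition is precisely designed to do it.
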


\begin{proof}
	We prove the result using the explosion algorithm. First, recall Remark~\ref{rem:NonVanishingProbabilityInitiallyConnected}. Applying~\eqref{eq:ConnectedConditioning} to the events
	\begin{align*}
	&\left\{ \#\{ e \notin \Cmax\colon d_{\max}(\mathcal C(e)) \geq 3\}\neq 0\right\}, \\
	&\left\{ \frac{m}{i^2}\#\{ e \notin \Cmax\colon d_{\max}(\mathcal C(e)) \geq 3\} > \epsilon \right\}, \;\;\; \epsilon >0,
	\end{align*}
	implies that to prove~\eqref{eq:OtherStuffLessThanSquareRoot} and~\eqref{eq:OtherStuffMoreThanSquareRoot} for $\overline{CM}_n(\mathbf{d},q)$, it suffices to show~\eqref{eq:OtherStuffLessThanSquareRoot} and~\eqref{eq:OtherStuffMoreThanSquareRoot} hold for $CM_n(\mathbf{d},q)$. In addition, recall Remark~\ref{rem:CanAlsoLookAtExplodedCMd}, implying that the number of edges in~$\CMD \setminus \Cmax$ in components containing a node with degree at least three is bounded by the same quantity in $\Cmd \setminus \Cmax$ with sufficiently high probability. In other words, it suffices to prove that~\eqref{eq:OtherStuffLessThanSquareRoot} and~\eqref{eq:OtherStuffMoreThanSquareRoot} hold for $\Cmd$.
	
	Recall the degree distribution of $CM_N(\mathbf{d}')$ from Lemma~\ref{lem:NewDegreeDistribution}. It follows from the proof of~\cite[Lemma 11]{MolRee95} that for every supercritical degree sequence $\mathbf{d}'$ (i.e.~$\E[D'_n(D'_n-1)]/\E(D_n')>1$) and any $\gamma \in (0,\infty)$, there exists $c=c(\mathbf d')<1$ such that in $\Cmd$, $\Prob(\exists \mathcal{C}\neq \Cmax : |E(\mathcal{C})|> \gamma \log n)\leq n^2c^{\gamma \log n}$. Therefore, for $\gamma $ large enough,
	\begin{align}
	\Prob(\exists \, \mathcal{C}\neq \Cmax : |E(\mathcal{C})|> \gamma \log n)=o(n^{-1}).
	\end{align}
	
	Consequently, since the number of edges in the giant component is much larger than $\gamma \log n$, it suffices to prove the claims~\eqref{eq:OtherStuffLessThanSquareRoot} and~\eqref{eq:OtherStuffMoreThanSquareRoot} for
	\begin{align}\label{eq:SmallComponentOutside}
	\# \left\{ e \in \Cmd : |E(\mathcal C(e))| \leq \gamma \log n, d_{\max}(\mathcal C(e)) \geq 3\right\}.
	\end{align}

	For this purpose, we use the standard exploration algorithm of $CM_N(\mathbf{d}')$ used in the literature (see e.g.~\cite{Federico2016, DhaHofLeeSen16a} for some similar formulations). At each time $t \in \mathbb{N}$, we define the sets of half-edges $\{ \Acal_t , \Dcal_t , \Ncal_t \}$ as the active, dead and neutral sets, and explore them in the following way:
	\begin{enumerate}
		\item At step $t=0$, pick a vertex $v \in [n]$  with $d_v \geq 3$ uniformly at random and set all its half-edges as active. All other half-edges are set as neutral, and $ \Dcal_0 = \emptyset$.
		\item At each step $t$, pick a half-edge $e_1(t) $ in $\Acal_t$ uniformly at random, and pair it with another half-edge $e_2(t)$ chosen uniformly at random in $\Acal_t \cup \Ncal_t$. Set $e_1(t), e_2(t)$ as dead. If $e_2(t) \in \Ncal_t$, then find the vertex $v(e_2(t))$ incident to $e_2(t)$ and activate all its other half-edges.
		\item Terminate the process when $\Acal_t= \emptyset$.
	\end{enumerate}
	
	\noindent
	We observe that when $\Acal_t=\emptyset$, we have exhausted the exploration of the connected component $\mathcal C (v)$, and the number of steps performed by the exploration algorithm is the number of edges in~$\mathcal{C}(v)$ In order to prove the claim, we thus need to prove that there is no $t\leq \gamma \log n$ such that $\Acal_t=\emptyset$ with sufficiently high probability. Let $(Z^{(v)}_t)_{t\geq 0}$ count the number of active half-edges starting from a vertex $v$ with $d_v\geq 3$. Note that at step $t$ the process can only go down if i) $e_2(t) \in \mathcal{N}_t$ and its incident vertex has degree one, causing $Z^{(v)}_t=Z^{(v)}_{t-1}-1$, or ii) $e_2(t) \in \mathcal{A}_t$, causing $Z^{(v)}_t=Z^{(v)}_{t-1}-2$. We denote these events by $A(t)$ and $B(t)$, respectively. Since $Z^{(v)}_0 \geq 3$, this counting process needs to decrease by at least three in total for the exploration process to die out. Moreover, the values of the counting process is small at the time steps where the process decreases. More specifically, $\left\{\Acal_{\gamma \log n}=\emptyset \right\} \subseteq F_1 \cup F_2 \cup F_3$, where
	\begin{align*}
	F_1 =\bigcup_{s_1,s_2,s_3 \leq \gamma \log n}& A(s_1) \cap A(s_2) \cap A (s_3) \cap \{Z_{s_1}^{\sss(v)},Z_{s_2}^{\sss(v)},Z_{s_3}^{\sss(v)} \leq 3 \}, \\ 
	F_2=\bigcup_{s_1,s_2 \leq \gamma \log n}&  A(s_1) \cap B (s_2) \cap \{ Z_{s_1}^{\sss(v)},Z_{s_2}^{\sss(v)} \leq 3 \},  \\
	F_3=\bigcup_{s_1,s_2 \leq \gamma \log n}&  B(s_1) \cap B (s_2) \cap \{ Z_{s_1}^{\sss(v)},Z_{s_2}^{\sss(v)} \leq 4 \}.
	\end{align*}
	We can bound the probabilities that these events occur by
	\begin{align*}
	\Prob (F_1) &\leq \left(\gamma \log n\right)^3 \left(1+\frac{2p_2}{d} \right)^3 \frac{i^3}{m^3}(1+o(1)) = O \left( \frac{i^3 \log^3 m}{m^3}\right), \\
	\Prob (F_2) &\leq \left(\gamma \log n\right)^2 \left(1+\frac{2p_2}{d} \right) \frac{i}{m}(1+o(1)) \frac{3}{m-2\gamma \log n}  = O \left( \frac{i \log^2 m}{m^2}\right),\\
	\Prob (F_3) &\leq \left(\gamma \log n\right)^2 \left( \frac{4}{m-2\gamma \log n} \right)^2 = O \left( \frac{\log^2 m}{m^2}\right).
	\end{align*}
	Consequently, using the union bound, we obtain that for every $i$ that satisfies $m^\epsilon \ll i \ll m^{1-\epsilon}$ for some $\epsilon >0$,
	\begin{align}
	\E &\left[\# \left\{ e : |E(\mathcal C(e))| \leq \gamma \log n, d_{\max}(\mathcal C(e)) \geq 3\right\}\right] \leq n \gamma \log n \left(\Prob (F_1)+\Prob (F_2)+\Prob (F_3)\right) = o\left( \frac{i^2}{m}\right).
	\label{eq:OtherStuffdprime}
	\end{align}
	By Markov's inequality, it follows that
	\begin{align*}
	\Prob\left(\# \left\{ e : |E(\mathcal C(e))| \leq \gamma \log n, d_{\max}(\mathcal C(e)) \geq 3\right\} \neq 0\right )= o(i^2/m),
	\end{align*}
	and 
	\begin{align*}
	\# \left\{ e : |E(\mathcal C(e))| \leq \gamma \log n, d_{\max}(\mathcal C(e)) \geq 3\right\} = \op\left(i^2/m\right).
	\end{align*}
\end{proof}

The following proposition specifies the number of vertices and edges in lines and the number of isolated nodes which are disconnected from the giant by a percolation process, which constitutes the leading-order term for $\CMD \setminus \Cmax$.

\begin{proposition}
	Consider $CM_n(\mathbf{d},q)$ with $q=i/m$ with $\sqrt{m}\ll i\ll m$. Define $L_k(n)$ as the number of isolated lines of length $k$ and $N_0(n)$ the number of isolated vertices. Then,
	\begin{align}\label{eq:LinesMoreThanSquareRoot}
	\frac{m}{i^2}\Big(N_0(n)+\sum_{k=2}^\infty kL_k(n)\Big)\pto \frac{ 2d p_2}{(d-2p_2)^2},\\
	\frac{m}{i^2}\Big(\sum_{k=2}^\infty (k-1)L_k(n)\Big)\pto \frac{ 4 p_2^2}{(d-2p_2)^2}. \label{eq:LinesMoreThanSquareRoot2}
	\end{align}
	
	\noindent
	Instead, if $i= o(\sqrt{m})$, then
	\begin{align}\label{eq:LinesLessThanSquareRoot}
	\Prob\Big(N_0(n)+\sum_{k=2}^\infty kL_k(n) \neq 0\Big)= O(i^2/m).
	\end{align}
	
	\noindent
	Moreover, \eqref{eq:LinesMoreThanSquareRoot}-\eqref{eq:LinesLessThanSquareRoot} hold also for $\overline{CM}_n(\mathbf{d},q)$.
	\label{prop:NumberOfLinesNodesOutsideGiant}
\end{proposition}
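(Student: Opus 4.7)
The plan is to transfer the problem to the exploded graph $\Cmd$ via the explosion algorithm and there carry out first- and second-moment computations on the line and isolated-vertex counts. By Remark \ref{rem:NonVanishingProbabilityInitiallyConnected}, it suffices to prove the statement for $\CMD$. In $\CMD$ an isolated line of length $k$ (meaning $k$ vertices, $k-1$ edges, $k\ge 2$) is precisely the image under step~4 of Algorithm~\ref{alg:ExplosionAlgorithm} of a line of length $k' \in \{k, k+1, k+2\}$ in $\Cmd$ where $k'-k$ of the two degree-one endpoints were selected for removal, while an isolated vertex of $\CMD$ arises either from a degree-zero vertex of $\Cmd$ (counted by $n_0'$) or from a line of length $2$ or $3$ in $\Cmd$ that loses all of its endpoints. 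By Proposition \ref{prop:NumberOfOtherStuffOutsideGiant}, edges outside the giant that sit in more complex components are $\op(i^2/m)$, so lines and isolated vertices dominate.

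For the first moment in $\Cmd$, I would enumerate line components of length $k$ by a direct configuration-pairing computation: pick an ordered sequence of $k$ vertices with degree pattern $(1,2,\dots,2,1)$ and compute the probability that their half-edges pair consecutively, dividing by two for orientation. Using Lemma \ref{lem:NewDegreeDistribution}, this yields
\begin{align*}
\E[L_k'(n)] \sim \frac{(d+2p_2)^2}{d^2}\cdot\frac{i^2}{4m}\,\beta^{k-2},\qquad \beta := 2p_2/d,
\end{align*}
while $\E[n_0'(n)]$ is read directly from \eqref{eq:n0Prime}. Passing to $\CMD$ via step~4, I would condition on $R_n$ and $n_1'$, which by \eqref{eq:RnLLN} and Lemma \ref{lem:NewDegreeDistribution} concentrate around $i$ and $i(d+2p_2)/d$ respectively; then each degree-one vertex is removed with probability $r = d/(d+2p_2) + \op(1)$, and endpoints of distinct lines are essentially independently removed up to a sampling-without-replacement correction of order $O(1/n_1')$. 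Combining the three transformation scenarios with weights $(1-r)^2,\, 2r(1-r),\, r^2$ gives $\E[L_k(n)] \sim \tfrac{16 p_2^2}{d^2}\cdot\tfrac{i^2}{4m}\,\beta^{k-2}$; summing as telescoping geometric series and simplifying via the identity $(d-2p_2)^2 + 4p_2(d-p_2) = d^2$ produces exactly the constants $\tfrac{2dp_2}{(d-2p_2)^2}$ and $\tfrac{4p_2^2}{(d-2p_2)^2}$ of \eqref{eq:LinesMoreThanSquareRoot}--\eqref{eq:LinesMoreThanSquareRoot2}.

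For concentration I would carry out a second-moment bound in $\Cmd$: distinct line components are automatically vertex-disjoint, so $\E[L_k'(L_k'-1)] = (1+O(k/m))\E[L_k']^2$, and hence $\var(L_k') = o(\E[L_k']^2)$. The exploration estimate $\Prob(\exists\,\mathcal{C} \neq \Cmax\colon |E(\mathcal{C})| > \gamma \log n) = o(n^{-1})$ from the proof of Proposition \ref{prop:NumberOfOtherStuffOutsideGiant} permits truncation to $k \le \gamma \log n$ at negligible cost, after which Chebyshev yields concentration of the truncated sums; the step~4 transformation preserves this since its weights are deterministic functions of $(R_n, n_1')$ up to $\op(1)$. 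For the sublinear regime $i = o(\sqrt m)$, the first moment of $N_0 + \sum_{k\ge 2} k L_k$ is already $O(i^2/m) = o(1)$, so Markov's inequality directly gives \eqref{eq:LinesLessThanSquareRoot}, and transfer of all conclusions to $\overline{CM}_n(\mathbf d,q)$ is an application of \eqref{eq:ConnectedConditioning}. The main obstacle is the bookkeeping in step~4: both $R_n$ and $n_1'$ are random, and the removal is uniform without replacement, inducing mild negative correlations between endpoint removals of different lines. Conditioning on $(R_n, n_1')$ and absorbing the $O(1/i)$ sampling-without-replacement correction into $(1+o(1))$ factors handles these dependencies uniformly in $k \le \gamma \log n$.
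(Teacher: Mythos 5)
Your overall strategy matches the paper's: pass through the explosion algorithm to $\Cmd$, compute first and second moments of line counts there, translate back to $\CMD$, and use Markov's inequality for the $i=o(\sqrt m)$ regime. Where you genuinely diverge is in the translation from $\Cmd$ to $\CMD$. The paper works with the aggregate quantities $n_0'+\sum_k kL_k'(n)$ and $\sum_k (k-1)L_k'(n)$ and then subtracts the number $L_R(n)$ of degree-one vertices removed from lines in step 4, arguing that $L_R(n)$ is hypergeometric and concentrates; this forces a separate correction for lines of length $2$ in which both endpoints are removed, since such a line contributes two vertex removals but only one edge removal. You instead characterize $L_k(n)$ directly as a weighted combination of $L_k'(n), L_{k+1}'(n), L_{k+2}'(n)$ via the removal probabilities $(1-r)^2, 2r(1-r), r^2$ with $r=d/(d+2p_2)+\op(1)$, and then sum. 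This avoids both the hypergeometric argument and the length-$2$ special case for the edge count, and the algebra closes via the identity $(d-2p_2)^2+4p_2(d-p_2)=d^2$ as you say, so your route is at least as clean for \eqref{eq:LinesMoreThanSquareRoot2}.

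Two points deserve attention. First, your rule for how isolated vertices of $\CMD$ arise is stated incorrectly: a line of length $2$ in $\Cmd$ that loses \emph{both} of its degree-one vertices leaves nothing behind, not an isolated vertex. The correct rule is that an isolated vertex comes from $n_0'$, or from a line of length $2$ losing exactly \emph{one} endpoint (weight $2r(1-r)$), or from a line of length $3$ losing both endpoints (weight $r^2$). With the corrected rule the sum $\tfrac{m}{i^2}\E[N_0]\to\tfrac{p_2}{2d}+\tfrac{p_2}{d}+\tfrac{p_2}{2d}=\tfrac{2p_2}{d}$, which combined with $\tfrac{m}{i^2}\sum_k k\E[L_k(n)]\to\tfrac{8p_2^2(d-p_2)}{d(d-2p_2)^2}$ does produce $\tfrac{2dp_2}{(d-2p_2)^2}$; if you had literally used ``both endpoints of a length-$2$ line'' with weight $r^2$, the constant would come out wrong. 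Second, your concentration sketch only controls the diagonal second moments $\E[L_k'(L_k'-1)]$; for Chebyshev on the sum $\sum_k kL_k'(n)$ you also need the off-diagonal products $\E[L_j'L_k']\approx\E[L_j']\E[L_k']$ for $j\neq k$, plus a dominated-convergence step to push the truncation $k\le\gamma\log n$ to $\infty$. Both are supplied in the paper by the multivariate moment computation of Lemma~\ref{lem:MultiVariateMoments}, and the same vertex-disjointness argument you invoke would extend to cross-terms, but it should be stated.
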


Before moving to the proof of Proposition~\ref{prop:NumberOfLinesNodesOutsideGiant}, we first consider the higher moments of  $L_k'(n), k\geq 1$, the number of isolated lines of length~$k$ in $\Cmd$. 

\begin{lemma}
	For any sequence $\mathbf{r}=\{r_2,...,r_k\}$ with $k\geq2$ of positive integer values, it holds as $n\rightarrow \infty$,
	\begin{align}
	\E [L_2'(n)^{r_2}\cdots L_k'(n)^{r_k}]\Big(\frac{m}{i^2}\Big)^{r_2+...+r_k}\to \prod_{j=2}^{k} \left( \frac{1}{4}\Big(1+ \frac{2p_2}{d}\Big)^2\Big(\frac{2p_2}{d}\Big)^{j-2}\right)^{r_j}.
	\label{eq:MultivariateMoments}
	\end{align}
	\label{lem:MultiVariateMoments}
\end{lemma}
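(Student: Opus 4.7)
My plan is to establish the convergence via the method of moments applied to the uniform pairing in $\Cmd$. First I would write $L_j'(n) = \sum_{L \in \mathcal{L}_j} \indi_L$, where $\mathcal{L}_j$ denotes the family of candidate isolated lines of length $j$ (each specified by an unordered pair of degree-1 endpoints together with an ordered sequence of $j-2$ distinct degree-2 middle vertices, modulo reversal) and $\indi_L$ is the indicator that $L$ appears as a component. Expanding the product of powers gives
\begin{align*}
\E\Bigl[\prod_{j=2}^k (L_j'(n))^{r_j}\Bigr] = \sum_{(L^{(j)}_s)} \Prob\Bigl(\bigcap_{j,s}\{\indi_{L^{(j)}_s}=1\}\Bigr),
\end{align*}
where the outer sum ranges over ordered tuples containing $r_j$ lines of length $j$ for each $j$. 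Set $R_1 := \sum_j r_j$ and $R_2 := \sum_j (j-2)r_j$, so that a tuple of pairwise vertex-disjoint distinct lines uses exactly $2R_1$ degree-1 vertices, $R_2$ degree-2 vertices, and $R_1+R_2$ edges.

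Second, if two distinct lines in such a tuple share a vertex $v$, the joint event has zero probability, because $v$'s half-edges would then need to be absorbed into two different isolated components. Hence only tuples of pairwise vertex-disjoint lines contribute, split into case (a) all entries distinct and case (b) some entries coinciding. In case (a) the number of ordered tuples equals $(n_1')_{2R_1}(n_2')_{R_2}/2^{R_1}$ (assign $2R_1$ ordered endpoint slots and $R_2$ ordered middle slots filled with distinct vertices, divided by the $2^{R_1}$ orientation reversals), and the conditional probability given $\mathbf{d}'$ that all the specified lines simultaneously occur is
\begin{align*}
2^{R_2}\cdot\frac{(2m-2(R_1+R_2)-1)!!}{(2m-1)!!},
\end{align*}
because each line of length $j$ admits $2^{j-2}$ realising half-edge pairings and the matching probability of any prescribed set of $R_1+R_2$ edges in $\Cmd$ is the standard double-factorial ratio. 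Taking the leading order in $1/m$ yields the conditional case (a) contribution
\begin{align*}
\Bigl(\frac{(n_1')^2}{4m}\Bigr)^{R_1}\Bigl(\frac{n_2'}{m}\Bigr)^{R_2}(1+o(1)).
\end{align*}

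Third, for case (b) a tuple with $s\ge 1$ pairs of coinciding entries uses at least $2s$ fewer degree-1 vertices, at least $s$ fewer degree-2 vertices, and at least $s$ fewer edges than the generic disjoint template, so rerunning the enumeration bounds its contribution relative to the leading term by $O((m/(n_1')^2)^s) = O((m/i^2)^s)$, which vanishes in the regime $\sqrt m \ll i \ll m$ relevant for Proposition~\ref{prop:NumberOfLinesNodesOutsideGiant}. Substituting $n_1' = i(1+2p_2/d)(1+\op(1))$ and $n_2'/m \pto 2p_2/d$ from Lemma~\ref{lem:NewDegreeDistribution}, and passing from the conditional convergence in probability to convergence of unconditional expectations via the deterministic bounds $n_1', n_2' \le n+R_n$ together with a truncation on the event $\{n_1' \le 2i(1+2p_2/d),\, n_2' \le 2p_2 n\}$ (whose complement has vanishing probability and contributes negligibly since the conditional expectation is uniformly polynomially bounded), produces
\begin{align*}
\E\Bigl[\prod_{j=2}^k (L_j'(n))^{r_j}\Bigr]\Bigl(\tfrac{m}{i^2}\Bigr)^{R_1}\to \prod_{j=2}^k \Bigl(\tfrac{1}{4}\bigl(1+\tfrac{2p_2}{d}\bigr)^2\bigl(\tfrac{2p_2}{d}\bigr)^{j-2}\Bigr)^{r_j}.
\end{align*}
The main obstacle I anticipate is the uniform control of the case (b) error terms across the entire multinomial expansion together with the transfer between conditional and unconditional expectations; once those two pieces are secured, the core calculation reduces to a direct application of the configuration-model matching formula to a disjoint subgraph template.
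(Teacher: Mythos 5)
Your proof is correct, and at its core it runs on the same engine as the paper's: both expand the moment product into a sum over ordered tuples of candidate lines, use that overlapping lines cannot simultaneously be components (so only vertex-disjoint tuples survive), apply the configuration-model half-edge pairing formula to the disjoint-template term, and argue the coinciding-entry tuples vanish relatively. The difference is organizational. The paper structures the argument as an induction on the maximal length $k$, repeatedly conditioning on a finite collection of shorter lines forming components and observing that the residual graph is again a configuration model whose degree sequence $\mathbf{d}'$ has changed by only a bounded amount, so the expected number of extra length-$k$ lines is asymptotically unaffected; the degenerate tuples are encoded in the coefficients $C(r_k, r')$ and shown negligible at each inductive step. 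You instead do a single flat expansion across all lengths at once, bound the degenerate-tuple error globally via the $O\bigl((m/i^2)^s\bigr)$ comparison, and, unlike the paper, make the conditional-to-unconditional transfer explicit through truncation on $\{n_1'\le 2i(1+2p_2/d),\ n_2'\le 2p_2 n\}$ together with the exponential concentration of $n_1',n_2'$. Each route has a modest advantage: the induction lets the paper reuse the single-length argument and avoid carrying the full multinomial bookkeeping, while your direct route is more elementary and surfaces the two error controls that are otherwise left implicit. One point you note but should make explicit in a final write-up: the limit in~\eqref{eq:MultivariateMoments} is only a finite nonzero constant in the regime $\sqrt{m}\ll i\ll m$; for $i = o(\sqrt{m})$ the degenerate tuples dominate and the rescaled moments diverge, so the lemma tacitly carries that restriction even though it is not written in its statement.
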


From Lemma~\ref{lem:MultiVariateMoments}, we can bound the number of edges in isolated line components in $\Cmd$.

\begin{corollary}
	For every $j\geq 1$, as $n \rightarrow \infty$,
	\begin{align}\label{eq:HigherMoments}
	\E\Big[\Big(\frac{m}{i^2}\sum_{k=2}^\infty kL_k'(n)\Big)^j\Big]\to  \Big(\frac{(d - p_2) (d + 2 p_2)^2}{2 d (d - 2 p_2)^2}\Big)^j.
	\end{align}
	\label{cor:HigherMoments}
\end{corollary}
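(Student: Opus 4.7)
The plan is to derive Corollary~\ref{cor:HigherMoments} from the joint-moment statement of Lemma~\ref{lem:MultiVariateMoments} via a multinomial expansion and a truncation argument. Writing $S_n := \tfrac{m}{i^2}\sum_{k\geq 2} k L_k'(n)$ and applying the multinomial theorem,
$$\E[S_n^j] = \Big(\tfrac{m}{i^2}\Big)^{\!j}\!\!\sum_{\mathbf r:\sum_k r_k = j}\binom{j}{\mathbf r}\prod_{k\geq 2} k^{r_k}\,\E\!\left[\prod_{k\geq 2} L_k'(n)^{r_k}\right].$$
For each finitely supported $\mathbf r$, Lemma~\ref{lem:MultiVariateMoments} gives the termwise limit $\prod_k k^{r_k}\mu_k^{r_k}$ with $\mu_k := \tfrac14(1+2p_2/d)^2 (2p_2/d)^{k-2}$. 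If the $n\to\infty$ limit can be exchanged with the infinite sum over $\mathbf r$, the target value reduces to $(\sum_{k\geq 2} k\mu_k)^j$, and the standard identity $\sum_{k\geq 2}k x^{k-2} = (2-x)/(1-x)^2$ evaluated at $x=2p_2/d\in(0,1)$ yields $\sum_{k\geq 2} k\mu_k = (d-p_2)(d+2p_2)^2/(2d(d-2p_2)^2)$, matching the claim.

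The exchange of limit and sum is the crux. For the lower bound I would truncate: with $S_n^{(K)} := \tfrac{m}{i^2}\sum_{k=2}^K k L_k'(n)\leq S_n$, the expansion of $\E[(S_n^{(K)})^j]$ is a finite sum of terms handled by Lemma~\ref{lem:MultiVariateMoments}, giving $\lim_n \E[(S_n^{(K)})^j] = (\sum_{k=2}^K k\mu_k)^j$, so $\liminf_n \E[S_n^j] \geq (\sum_{k=2}^K k\mu_k)^j$, and letting $K\to\infty$ yields the matching lower bound. For the upper bound, I would write $S_n = S_n^{(K)} + T_n^{(K)}$ with $T_n^{(K)} := \tfrac{m}{i^2}\sum_{k>K} k L_k'(n)$ and apply Minkowski's inequality, so that it suffices to prove $\lim_{K\to\infty}\limsup_n \|T_n^{(K)}\|_j = 0$.

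The main obstacle is obtaining a uniform-in-$n$ tail bound on $\|L_k'(n)\|_j$ with geometric decay in $k$. I would derive this directly from the combinatorics underlying Lemma~\ref{lem:MultiVariateMoments}: a line on $k$ vertices in $CM_N(\mathbf d')$ uses two degree-one endpoints and $k-2$ degree-two internal vertices with a specific pairing of half-edges, so a first-moment count on $j$-tuples of line configurations (disjoint tuples dominate; overlapping ones give strictly smaller orders by a standard over-count) yields
$$\E[L_k'(n)^j] \leq C_j\,(n_1')^{2j}(n_2')^{j(k-2)}/m^{j(k-1)},$$
whence, using $n_1' = \Op(i)$ and $n_2'/m \to 2p_2/d$ from Lemma~\ref{lem:NewDegreeDistribution}, we get $(\E[L_k'(n)^j])^{1/j} \leq C_j'\,(i^2/m)(2p_2/d)^{k-2}$ uniformly in $n$ large and in $k\geq 2$. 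Minkowski then gives $\|T_n^{(K)}\|_j \leq C_j'\sum_{k>K} k\,(2p_2/d)^{k-2} \to 0$ as $K\to\infty$, closing the argument. Apart from this uniform tail bound, all the steps are routine multinomial bookkeeping plus the geometric-series identity.
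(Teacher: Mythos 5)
Your overall strategy mirrors the paper's: expand the $j$-th power via the multinomial theorem, use Lemma~\ref{lem:MultiVariateMoments} for the termwise limits, and justify exchanging limit and infinite sum. The paper does this exchange by dominated convergence using the uniform bound~\eqref{eq:LineMultivariateDominated}; you replace it with a truncation-plus-Minkowski argument. That substitution is fine in principle, but the moment bound you plug into Minkowski is wrong.

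The claimed estimate $(\E[L_k'(n)^j])^{1/j} \leq C_j'\,(i^2/m)(2p_2/d)^{k-2}$ fails for large $k$. Writing $a_k := \E[L_k'(n)] = \Theta\big((i^2/m)(2p_2/d)^{k-2}\big)$ and using $x^j = \sum_{r=1}^j S(j,r)\,x(x-1)\cdots(x-r+1)$, one gets $\E[L_k'(n)^j] = \sum_{r=1}^j S(j,r)\,\E\big[(L_k'(n))_r\big] \approx \sum_{r=1}^j S(j,r)\,a_k^r$: the cross terms with distinct overlapping lines vanish (two distinct overlapping lines cannot both be isolated components), so the ``overlapping'' contributions you dismiss are not the issue — the issue is the \emph{diagonal} $r=1$ term $S(j,1)a_k = a_k$, which you omit entirely. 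For $k$ past the point where $a_k<1$ (i.e.\ $k-2 \gtrsim \log(i^2/m)/\log(d/2p_2)$), this diagonal term dominates and $\E[L_k'(n)^j]\approx a_k$, so $\|L_k'(n)\|_j \approx a_k^{1/j}$, which is much larger than your claimed $a_k$ when $a_k<1$. So ``disjoint tuples dominate'' is false in exactly the large-$k$ tail you need to control. The gap is reparable: the correct bound is $\|L_k'(n)\|_j \lesssim \max\big(a_k,\, a_k^{1/j}\big)$, and one can split the tail sum $\frac{m}{i^2}\sum_{k>K}k\|L_k'(n)\|_j$ into the range where $a_k\geq 1$ (contributing $\lesssim \sum_{k>K}k(2p_2/d)^{k-2}$, which $\to 0$ as $K\to\infty$ uniformly in $n$) and the range where $a_k<1$ (contributing $\lesssim (m/i^2)^{1-1/j}\sum_{k\geq 2}k(2p_2/d)^{(k-2)/j}$, which $\to 0$ as $n\to\infty$ for each fixed $K$), and this suffices. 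But as written the estimate is false, so the upper-bound half of your argument has a hole. Two minor additional points: the right-hand side of your inequality should be interpreted conditionally on $\mathbf d'$ and then averaged over $(n_1',n_2')$, and the geometric ratio must be padded to $(2p_2+\varepsilon)/(d-\varepsilon)$ for some small $\varepsilon>0$ to turn the $(1+o_{\Prob}(1))$ factors into a genuine $n$-uniform bound, as the paper does in~\eqref{eq:LineMultivariateDominated}.
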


The proof of Lemma~\ref{lem:MultiVariateMoments} and Corollary~\ref{cor:HigherMoments} is given in Appendix~\ref{app:PercolationResults}. Next, we prove Proposition~\ref{prop:NumberOfLinesNodesOutsideGiant}.

\begin{proof} [Proof of Proposition \ref{prop:NumberOfLinesNodesOutsideGiant}]
	Again, note that it follows from Corollary~\ref{cor:HigherMoments} that
	\begin{align*}
	\E\Big[\sum_{k=2}^\infty kL_k'(n)\Big] &= O\left( \frac{i^2}{m} \right).
	\end{align*}
	By Markov's inequality and Lemma~\ref{lem:NewDegreeDistribution}, if $i=o(\sqrt{m})$,
	\begin{align*}
	\Prob\left(n_0' + \sum_{k=2}^\infty kL_k'(n) \neq 0\right) =  O\left( \frac{i^2}{m} \right).
	\end{align*}
	Recall that in the final step of the explosion algorithm, we only remove vertices of degree 1. Therefore, the only way for the number of vertices in line components and isolated vertices to increase is when a component in $\Cmd$ with a vertex that has a degree at least three turns into a line or an isolated node. By Proposition~\ref{prop:NumberOfOtherStuffOutsideGiant}, such components appear in $\Cmd$ with probability $\op(i^2/m)$, and we conclude that~\eqref{eq:LinesLessThanSquareRoot} holds.
	
	In order to prove \eqref{eq:LinesMoreThanSquareRoot} and~\eqref{eq:LinesMoreThanSquareRoot2}, we also need second moments. By Corollary~\ref{cor:HigherMoments}, 
	\begin{align*}
	\E\Big[\Big(\frac{m}{i^2}\sum_{k=2}^\infty kL_k'(n)\Big)^2\Big]=\E\Big[\Big(\frac{m}{i^2}\sum_{k=2}^\infty kL_k'(n)\Big)\Big]^2(1+o(1)),
	\end{align*}
	and thus, by the second moment method
	\begin{align*}
	\frac{m}{i^2}\sum_{k=2}^\infty kL_k'(n)\pto \frac{(d - p_2) (d + 2 p_2)^2}{2 d (d - 2 p_2)^2}.
	\end{align*}
	Since $n_0'= \frac{p_2i^2}{2dm}$ by Lemma~\ref{lem:NewDegreeDistribution}, we obtain that
	\begin{align*}
	\frac{m}{i^2}\Big(n_0'+\sum_{k=2}^\infty kL_k'(n)\Big)\pto \frac{p_2}{2d}+\frac{(d - p_2) (d + 2 p_2)^2}{2 d (d - 2 p_2)^2}.
	\end{align*}	
	The same arguments can also be used to prove concentration of $\sum_{k=2}^\infty (k-1)L_k'(n)$, since it is dominated by $\sum_{k=2}^\infty kL_k'(n)$. That is,
	\begin{align*}
	\frac{m}{i^2}\sum_{k=2}^\infty (k-1)L_k'(n)\pto \frac{(d + 2 p_2)^2}{4 (d - 2 p_2)^2}.
	\end{align*}
	
	We computed the number of vertices and edges that are contained in isolated node components or in line components in $\Cmd$. To obtain the corresponding value for $\CMD$ we need to subtract the number of degree one vertices that are part of line components and are removed in the last step of the explosion algorithm, and add the number of vertices whose components turn into a line or an isolated vertex by the removal of some degree $1$ vertices. By Proposition~\ref{prop:NumberOfOtherStuffOutsideGiant}, the number of components that can turn into a line or isolated vertex is bounded by $\op(i^2/m)$, and hence  the contribution of these type of events is negligible. Therefore, it suffices to consider the number of vertices and edges that are removed in the final step of the explostion algorithm from the line components in $\Cmd$. We observe that there are in total
	\begin{align}
	\sum_{k=2}^\infty 2L_k'(n)= \frac{ (d + 2 p)^2}{2d (d - 2 p)}\frac{i^2}{m}(1+\op(1)),
	\end{align}
	vertices of degree one in the line components out of the $i(1+2p_2/d)(1+\op(1))$ that exist in $\mathbf d'$. We define $L_R(n)$ as the number of vertices removed from line components in the last step of the explosion algorithm. We remove $i(1+\op(1))$ edges of degree one uniformly at random, so the number of the degree-one vertices removed from lines is given by an hypergeometric variable with 
	\begin{align}
	\E[L_R(n)]=\frac{ (d + 2 p_2)^2}{2d (d - 2 p_2)}\frac{d}{d+2p_2} (1+o(1)) = \frac{ d + 2 p_2}{2 (d - 2 p_2)} (1+o(1)) .
	\end{align}
	A hypergeometric random variable with diverging mean concentrates around the mean, and hence by the law of large numbers, 
	\begin{align*}
	\frac{m}{i^2}\Big( N_0(n)+\sum_{k=2}^\infty kL_k(n)\Big)\pto  \frac{p_2}{2d}+\frac{(d - p_2) (d + 2 p_2)^2}{2 d (d - 2 p_2)^2}-\frac{ d + 2 p_2}{2 (d - 2 p_2)} =\frac{2dp_2}{(d-2p_2)^2},
	\end{align*}
	as claimed.
	
	We do the same computation for the number of edges, this time accounting for the fact that if both vertices of a line of length $2$ are removed, only one edge is removed, while all the other vertex and edge removals are in bijection. The number of lines of length $2$ which are removed is given by 
	\begin{align}
	L_2'(n)\left(1+\frac{2p_2}{d}\right)^{-2}=\frac{i^2}{m} \frac{1}{4}\Big(1+ \frac{2p_2}{d}\Big)^2\Big(1+ \frac{2p_2}{d}\Big)^{-2}(1+\op(1))=\frac{i^2}{4m} (1+\op(1)).
	\end{align} 
	We conclude that
	\begin{align}
	\frac{m}{i^2}\sum_{k=2}^\infty (k-1)L_k(n) \overset{\Prob}{\rightarrow} \frac{(d + 2 p_2)^2}{4 (d - 2 p_2)^2}- \frac{ d + 2 p_2}{2 (d - 2 p_2)}+ \frac{1}{4} =  \frac{4p_2^2}{(d-2p_2)^2}.
	\end{align}
	
	Finally, we conclude that the claims also hold when for $\overline{CM}_n(\mathbf{d},q)$ by Remark~\ref{rem:NonVanishingProbabilityInitiallyConnected}, i.e. by applying \eqref{eq:ConnectedConditioning} to the events 
	\begin{align*}
	&\Big\{\Big|\frac{m}{i^2}\Big(N_0(n)+\sum_{k=2}^\infty kL_k(n)\Big)- \frac{2 d p_2}{(d-2p_2)^2}\Big|\geq \varepsilon\Big\},\\
	&\Big\{\Big|\frac{m}{i^2}\Big(\sum_{k=2}^\infty (k-1)L_k(n)\Big)- \frac{ 4 p_2^2}{(d-2p_2)^2}\Big|\geq \varepsilon\Big\},\\
	&\Big\{N_0(n)+\sum_{k=2}^\infty kL_k(n) \neq 0\Big\}.
	\end{align*}
\end{proof}

Proposition~\ref{prop:NumberOfLinesNodesOutsideGiant} indicates that the typical number of isolated nodes and line components outside the giant component is of order $\Theta(i^2/m)$. Naturally, the isolated nodes do not contribute to the number of edges outside the giant component, and hence we are mostly interested in the total number of edges in the line components, which is likely to be of order $\Theta(i^2/m)$ due to~\eqref{eq:LinesMoreThanSquareRoot2}.

Finally, we would like to comment on the number of isolated cycles in $\Cmd$. Let $C_k'(n), k\geq 1,$ denote the number of isolated cycles with $k$ edges. In view of Lemma~\ref{lem:NewDegreeDistribution}, if $q=o(1)$, then the degree distribution $\mathbf{d}'$ satisfies all conditions in~\cite{Federico2016} (with extremely high probability) except one, namely $n_1' \neq O(\sqrt{m})$. However, the proof of Theorem~3.3 in~\cite{Federico2016} does not use this condition to prove a bound on the number of isolated cycles: this condition was only needed to bound the number of line components. Therefore, it follows from~\cite[(5.18)]{Federico2016},
\begin{align}
\lim_{n \rightarrow \infty} \E\left[ \sum_{k\geq 1} k C_k'(n) \right] < \infty.
\label{eq:CycleComponentBound}
\end{align}
In other words, the expected number of edges outside the giant that are contained in cycle components is finite. Since~$CM_n(\mathbf{d},q)$ is created from $\Cmd$ by removing $R_n$ vertices of degree one, we observe that all isolated cycles that exist in $\Cmd$ remain isolated cycles in $CM_n(\mathbf{d},q)$. Moreover, more isolated cycles can be formed from more complex components in $\Cmd$. Using Proposition~\ref{prop:NumberOfLinesNodesOutsideGiant} and~\eqref{eq:CycleComponentBound} together with Markov's inequality, we observe that if $q=i/m$ with $\sqrt{m} \ll i \ll m^\delta$ for some $\delta \in (0,1/2)$, then the number of edges in $CM_n(\mathbf{d},q)$, outside the giant that are contained in cycle components is $\op(i^2/m)$. In view of Remark~\ref{rem:NonVanishingProbabilityInitiallyConnected}, the same statement holds for $\overline{CM}_n(\mathbf{d},q)$.

\subsection{Probabilistic bounds on component sizes outside the giant}\label{sec:LargeDeviationBoundsBeyondTypicalSizes}
In this section, we provide large deviation bounds on the number of edges outside the giant for the percolated connected configuration model with removal probability $q=i/m$ with $\sqrt{m} \ll i \ll m^{1-\delta}$ for some $\delta \in (0,1/2)$. Again, in view of Remarks~\ref{rem:NonVanishingProbabilityInitiallyConnected} and~\ref{rem:CanAlsoLookAtExplodedCMd}, it suffices to consider $\Cmd$ only.

First, we provide a sharper bound on the probability of having a number of edges in these complex components outside the giant that is even of a higher order of magnitude than $O(i^2/m)$.

\begin{proposition}\label{prop:OtherStuffDecay}
	Consider $\Cmd$ obtained by removal probability $q=i/m$ with $\sqrt{m} \ll i \ll m^{1-\delta}$ for some $\delta \in (0,1/2)$. For every $\alpha >0$,
	\begin{align}
	\Prob\left(\frac{m}{i^2} \#  \{ e \notin \Cmax\colon d_{\max}(\mathcal C(e)) \geq 3\}\geq m^{\alpha}\right) = O(m^{-3}) .
	\end{align}
	\label{prop:ExponentialDecayofOtherStuffOutsideGiant}
\end{proposition}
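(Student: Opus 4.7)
Let $X := \#\{e \notin \Cmax : d_{\max}(\mathcal{C}(e)) \geq 3\}$. The strategy is to upgrade the first-moment estimate obtained in the proof of Proposition~\ref{prop:NumberOfOtherStuffOutsideGiant} to a $k$-th moment estimate for $k$ large, and then apply Markov's inequality. First, the subcritical component bound from \cite[Lemma~11]{MolRee95} invoked there yields $\Prob(\exists\,\mathcal{C} \neq \Cmax : |E(\mathcal{C})| > \gamma \log n) \leq n^2 c^{\gamma \log n}$ for some $c<1$, which is $o(m^{-3})$ once $\gamma$ is chosen sufficiently large. Hence it suffices to bound the truncated variable $X' := \#\{e : |E(\mathcal{C}(e))| \leq \gamma \log n,\ d_{\max}(\mathcal{C}(e)) \geq 3\}$.

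Using the same exploration as in Proposition~\ref{prop:NumberOfOtherStuffOutsideGiant}, I would bound $X' \leq \gamma \log n \cdot \sum_{v : d_v \geq 3} \indi_{E_v}$, where $E_v = F_1^{(v)} \cup F_2^{(v)} \cup F_3^{(v)}$ is the event that the exploration started at $v$ dies within $\gamma \log n$ steps. For fixed $k \in \Nat$ this gives
\begin{align*}
\E[(X')^k] \leq (\gamma \log n)^k \sum_{v_1, \ldots, v_k} \Prob(E_{v_1} \cap \cdots \cap E_{v_k}).
\end{align*}
To control the joint probability I would run the $k$ explorations sequentially. Conditional on the pairings revealed by the first $j-1$ explorations (at most $O(k \log m)$ half-edges in total), the remaining graph is still a configuration model on the surviving half-edges, and every individual pairing probability used in the $j$-th exploration is perturbed by at most a factor $1 + O(\log m / m)$. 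Iterating over the total exploration budget yields the approximate-independence bound $\Prob(E_{v_1} \cap \cdots \cap E_{v_k}) \leq (1 + o(1))\prod_{j=1}^{k} \Prob(E_{v_j})$.

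Combining with the single-source estimate $\Prob(E_v) = O(i^3(\log m)^3/m^3)$ (the dominant term among $\Prob(F_1^{(v)}), \Prob(F_2^{(v)}), \Prob(F_3^{(v)})$ when $i \gg \sqrt m$, already computed in the proof of Proposition~\ref{prop:NumberOfOtherStuffOutsideGiant}) and summing over $v_1, \ldots, v_k$ yields
\begin{align*}
\E[(X')^k] \leq C_k \left( \frac{i^3 (\log m)^c}{m^2} \right)^k
\end{align*}
for constants $C_k, c > 0$ independent of $m$. Markov's inequality then gives
\begin{align*}
\Prob\!\left( X' \geq m^\alpha \frac{i^2}{m} \right) \leq C_k \left( \frac{i(\log m)^c}{m^{1+\alpha}} \right)^k = O\!\left( m^{-\alpha k} (\log m)^{ck} \right),
\end{align*}
and choosing $k$ with $\alpha k \geq 4$ absorbs the polylog factor and produces the claimed $O(m^{-3})$ rate.

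The main obstacle is making the approximate-independence step precise. Concretely, one has to verify that after conditioning on the pairings exposed by the previous $j-1$ explorations, the surviving counts of degree-one and degree-two vertices (which drive $F_1^{(v_j)}, F_2^{(v_j)}, F_3^{(v_j)}$) shift by at most $O(k \log m)$, and the fraction of unavailable half-edges is $O(k \log m / m)$, so that each per-step probability bound in the proof of Proposition~\ref{prop:NumberOfOtherStuffOutsideGiant} carries through with a multiplicative error $1 + O(\log m / m)$. Since by Lemma~\ref{lem:NewDegreeDistribution} we have $n_1' = \Theta(i)$ with $i \gg \sqrt m$, this relative perturbation is indeed $o(1)$, but the bookkeeping must be set up carefully; once it is, the entire argument is a straightforward iteration of the single-source estimates already established.
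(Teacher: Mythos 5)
Your overall strategy---discard components of size $>\gamma\log n$ via the subcritical bound from \cite[Lemma~11]{MolRee95}, then bound a high moment of the truncated count and apply Markov---is exactly the route the paper takes. However, the approximate-independence claim $\Prob(E_{v_1}\cap\cdots\cap E_{v_k})\leq(1+o(1))\prod_j\Prob(E_{v_j})$ is false as stated. The sum ranges over \emph{arbitrary} $k$-tuples of vertices, including tuples where $v_j$ coincides with some earlier $v_h$ or, more generally, lies in the component $\mathcal{C}(v_h)$ already fully revealed by a previous exploration. Conditionally on the first $j-1$ explorations, $E_{v_j}$ then occurs with probability essentially one, not $\Prob(E_{v_j})$. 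Your ``main obstacle'' paragraph correctly worries about the $O(k\log m/m)$ perturbation of the degree counts for \emph{fresh} starting vertices, but misses that the dangerous contribution is the diagonal, not the perturbation. As a concrete symptom: for $\sqrt{m}\ll i\ll m^{2/3}$ one has $i^3/m^2\to 0$, so the $n$ tuples with $v_1=\cdots=v_k$ alone already contribute $(\gamma\log n)^k\,n\,\Prob(E_v)=\Theta\bigl((\log m)^{k+3}i^3/m^2\bigr)$, which is much larger than your claimed $\bigl(i^3(\log m)^c/m^2\bigr)^k$.

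The paper's proof repairs exactly this by an iterated tower-property argument. The conditional inner sum $\E\bigl[\sum_{v_j}\indi_{\mathcal{I}(v_j)}\mid\mathcal{I}(v_1),\ldots,\mathcal{I}(v_{j-1})\bigr]$ is split into the contribution from $v_j\in\bigcup_{h<j}V(\mathcal{C}(v_h))$, bounded deterministically by $(j-1)\gamma\log n$, and the contribution from new $v_j$, bounded by $o(i^2/m)$; iterating yields $\E\bigl[(\#\{v\colon\cdots\})^j\bigr]\leq\bigl((j-1)\gamma\log n+o(i^2/m)\bigr)^j$ rather than a pure $j$-th power of $o(i^2/m)$. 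Feeding this into Markov with the normalization $m/i^2$ still gives $o\bigl((\log n)^2 m^{-\alpha}\bigr)^j$, since $m/i^2\ll 1$, and choosing $j\geq(3+\varepsilon)/\alpha$ closes the argument. Your final Markov step would also go through once the diagonal term is retained---the $(\log m)^{k+3}i^3/m^2$ contribution is still $o\bigl(m^{\alpha(1-k)+\text{small}}\bigr)$ after dividing by $(m^{\alpha-1}i^2)^k$ thanks to $i\gg\sqrt{m}$---so the gap is genuine but repairable; the cleanest fix is the explicit split the paper uses rather than an approximate-independence factorization that cannot hold uniformly over all tuples.
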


\begin{proof}
	We note  that, by the proof of \cite[Lemma 11]{MolRee95}, for $\gamma>0$ sufficiently large,
	\begin{align}
	\Prob(\exists\, \mathcal{C}\neq \Cmax : |E(\mathcal{C})|\geq \gamma \log n)=o(m^{-3}).
	\end{align}
	We are left to bound the contribution from components that contain at most $\gamma \log n$ edges. We use the method of moments. We observe that
	\begin{align*}
	(\#\{\mathcal C_l \ s.t. \ |E(\mathcal{C}_l)|\leq \gamma \log n, &d_{\max}(\mathcal C_l) \geq 3 \})^j \leq (\#\{v \in [n] : d_v \geq 3,|E( \mathcal{C}(v))|\leq \gamma\log n\})^j \\ 
	&= \sum_{v_1,...,v_j \in [n]} \indi_{\{|E(\mathcal{C}(v_1))|\leq \gamma \log n \colon d_{v_1}\geq 3 \}}  \cdots \indi_{\{|E(\mathcal{C}(v_j))|\leq \gamma \log n \colon d_{v_j}\geq 3 \}}.
	\end{align*}
	We stress that the vertices $v_1,...,v_j$ in the summation are not necessarily mutually distinct. For the purpose of exposition, write for a vertex $v \in [n]$,
	\begin{align*}
	\mathcal{I}(v)=\{ d_{v}\geq 3,|E(\mathcal{C}(v))| \leq \gamma \log n  \}.
	\end{align*}
	Applying the tower property, we obtain
	\begin{align*}
	\E&\left[(\#\{\mathcal C_l \ s.t. \ |E(\mathcal{C}_l)|\leq \gamma \log n, d_{\max}(\mathcal C_l) \geq 3 \})^j \right] \leq \E\left[\sum_{v_1,...,v_j \in [n]} \indi_{\mathcal{I}(v_1)}  \indi_{\mathcal{I}(v_2)} \cdots \indi_{\mathcal{I}(v_j)}\right] \\
	&\hspace{3cm}= \E \left[ \sum_{v_1,...,v_{j-1} \in [n]} \indi_{\mathcal{I}(v_1)}  \indi_{\mathcal{I}(v_2)} \cdots \indi_{\mathcal{I}(v_{j-1})}\E\left[ \sum_{v_j \in [n]} \indi_{\mathcal{I}(v_j)} \,\big\vert\, \mathcal{I}(v_1), ..., \mathcal{I}(v_{j-1}) \right] \right].
	\end{align*}
	For a graph (or component) $G$, write $V(G)$ as the vertex set of that graph. Then,
	\begin{align*}
	\E\left[ \sum_{v_j \in [n]} \indi_{\mathcal{I}(v_j)} \,\big\vert\, \mathcal{I}(v_1), \mathcal{I}(v_2), ..., \mathcal{I}(v_{j-1}) \right] &= \E\left[ \sum_{v_j \in [n]} \indi_{\mathcal{I}(v_j)} \colon v_j \in \bigcup_{h=1}^{j-1} V(\mathcal{C}(v_h)) \,\big\vert\, \mathcal{I}(v_1), \mathcal{I}(v_2), ..., \mathcal{I}(v_{j-1}) \right] \\
	& \hspace{0.3cm}+ \E\left[ \sum_{v_j \in [n]} \indi_{\mathcal{I}(v_j)} \colon v_j \not\in \bigcup_{h=1}^{j-1} V(\mathcal{C}(v_h)) \,\big\vert\, \mathcal{I}(v_1), \mathcal{I}(v_2), ..., \mathcal{I}(v_{j-1}) \right] .
	\end{align*}
	The first term is trivially bounded by
	\begin{align*}
	\E\left[ \sum_{v_j \in [n]} \indi_{\mathcal{I}(v_j)} \colon v_j \in \bigcup_{h=1}^{j-1} V(\mathcal{C}(v_h)) \,\big\vert\, \mathcal{I}(v_1), \mathcal{I}(v_2), ..., \mathcal{I}(v_{j-1}) \right] \leq (j-1) \gamma \log n.
	\end{align*}
	For the second term, we note that we count the number of vertices outside the giant component that have a degree at least three, while disregarding the set $\cup_{h=1}^{j-1} \mathcal{I}(v_h)$. We note that if we remove the components $\cup_{h=1}^{j-1} \mathcal{C}(v_h)$ from $\Cmd$, the remaining graph is a configuration model but with a modified degree sequence. In other words, we remove components that have a total of at most $O(\log n)$ edges. This number is too small to change the degree sequence $\mathbf{d}'$ much. That is, it follows from (the proof of) Proposition~\ref{prop:NumberOfOtherStuffOutsideGiant} that
	\begin{align*}
	\E\left[ \sum_{v_j \in [n]} \indi_{\mathcal{I}(v_j)} \colon v_j \not\in \bigcup_{h=1}^{j-1} V(\mathcal{C}(v_h)) \,\big\vert\, \mathcal{I}(v_1), \mathcal{I}(v_2), ..., \mathcal{I}(v_{j-1}) \right] = o\left( \frac{i^2}{m} \right).
	\end{align*}
	Iterating the argument, we obtain
	\begin{align*}
	\E&\left[(\#\{\mathcal C_l \ s.t. \ |E(\mathcal{C}_l)|\leq \gamma \log n, d_{\max}(\mathcal C_l) \geq 3 \})^j \right] = o\left( (j-1)\log n + o\left(\frac{i^2}{m}\right) \right)^j,
	\end{align*}
	and hence
	\begin{align*}
	&\E\big[(\# \{ e : |E(\mathcal C (e))|\leq \gamma \log n\colon d_{\max}(\mathcal C(e)) \geq 3\})^j\big]\leq \Big((j-1) (\gamma\log n)^2 +o\Big(\frac{i^2\log n}{m}\Big)\Big)^j.
	\end{align*}
	
	\noindent
	Finally, using Markov's inequality, it holds for every $j \in \mathbb{N}$,
	\begin{align*}
	\Prob\Big(&\frac{m\# \{ e : |E(\mathcal C(e))|\leq \gamma \log n\colon d_{\max}(\mathcal C(e)) \geq 3\}}{i^2}\geq m^{\alpha}\Big) \leq \Big((j-1)\gamma^2 (\log n)^2 +o\Big(\frac{i^2\log n}{m}\Big)\Big)^j\frac{m^{j(1-\alpha)}}{i^{2j}}\\
	&\hspace{7cm}= O\left(\Big((\log n)^2 + \frac{i^2\log n}{m }\Big)\frac{m^{(1-\alpha)}}{i^{2}}\right)^j=o\left((\log n)^2m^{-\alpha}\right)^j.
	\end{align*}
	Choosing $j \geq (3+\varepsilon)/\alpha$ for some $\varepsilon >0$, the claim follows.
\end{proof}

Next, we provide a result that shows that the probability for the number of edges in line components in $\Cmd$ to be of a higher order of magnitude than its expectation decays fast. 

\begin{lemma}\label{prop:LinesDecay} Consider  $\Cmd$ obtained by removal probability $q=i/m$, where $\sqrt{m} \ll i \ll m^{1-\delta}$ for some $\delta \in (0,1/2)$. For every $\alpha >0$,
	\begin{align}
	\Prob\Big(\frac{m}{i^2}\Big(\sum_{k=2}^\infty (k-1)L_k'(n)\Big) \geq m^\alpha\Big)= O(m^{-3}).
	\end{align}
\end{lemma}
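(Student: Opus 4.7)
The plan is to reduce the statement to an application of Markov's inequality together with the moment bounds already established in Corollary~\ref{cor:HigherMoments}. Observe first that, edge-count-wise, $\sum_{k=2}^\infty (k-1)L_k'(n) \leq \sum_{k=2}^\infty k L_k'(n)$, so any $j$-th moment bound on the latter is inherited by the former. Corollary~\ref{cor:HigherMoments} tells us that for every fixed $j \geq 1$,
\begin{align*}
\E\left[\left(\frac{m}{i^2}\sum_{k=2}^\infty k L_k'(n)\right)^j\right] \to \left(\frac{(d-p_2)(d+2p_2)^2}{2d(d-2p_2)^2}\right)^j =: C^j,
\end{align*}
so in particular the $j$-th moment of $(m/i^2)\sum_{k=2}^\infty (k-1)L_k'(n)$ is bounded by $2C^j$ for all $n$ sufficiently large.

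Next, I would apply Markov's inequality at level $m^\alpha$:
\begin{align*}
\Prob\left(\frac{m}{i^2}\sum_{k=2}^\infty (k-1)L_k'(n) \geq m^\alpha \right) \leq \frac{\E\left[\left(\frac{m}{i^2}\sum_{k=2}^\infty (k-1)L_k'(n)\right)^j\right]}{m^{\alpha j}} \leq \frac{2 C^j}{m^{\alpha j}}.
\end{align*}
Since $C$ is a constant independent of $n$ (and hence of $m$), choosing the integer $j$ large enough so that $\alpha j \geq 3$, say $j = \lceil 3/\alpha \rceil + 1$, gives the desired bound $O(m^{-3})$. The fact that $j$ depends on $\alpha$ but not on $m$ is crucial, and this is exactly the same style of argument used at the end of the proof of Proposition~\ref{prop:ExponentialDecayofOtherStuffOutsideGiant}.

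The one place where some care is needed — and thus the main (mild) obstacle — is verifying that Corollary~\ref{cor:HigherMoments} gives a uniform $O(1)$ bound on the $j$-th moment for each fixed $j$, rather than merely an asymptotic limit. Since the corollary is stated as a limit to a finite constant, this uniform boundedness follows automatically for all sufficiently large $n$, which is the regime of interest. No separate concentration tool or exploration-algorithm argument is required here because the moment method developed for Lemma~\ref{lem:MultiVariateMoments} already produces all integer moments, and these moments stay bounded on the natural scale $i^2/m$. Consequently the proof should be short, essentially two lines of moment bound plus Markov, with the choice $j = j(\alpha)$ matching the desired polynomial decay rate $m^{-3}$.
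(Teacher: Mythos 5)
Your proof is correct and follows essentially the same route as the paper: bound $\sum_{k\geq 2}(k-1)L_k'(n)$ by $\sum_{k\geq 2}kL_k'(n)$, invoke Corollary~\ref{cor:HigherMoments} for a uniform $O(1)$ bound on the rescaled $j$-th moment, apply Markov's inequality, and choose an integer $j\geq 3/\alpha$. Your extra remark about converting the corollary's limit into a uniform bound for large $n$ is exactly the implicit step the paper also relies on, so the two arguments coincide.
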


\begin{proof}
	For every $j\in \mathbb{N}$, by Markov's inequality, 
	\begin{align*}
	\Prob\Big(\frac{m}{i^2}\Big(\sum_{k=2}^\infty (k-1)L_k'(n)\geq m^\alpha\Big)\leq \E\Big[\Big(\frac{m}{i^2}\Big(\sum_{k=2}^\infty kL_k'(n)\Big)^j\Big]m^{-\alpha j}.
	\end{align*}
	By Corollary~\ref{cor:HigherMoments},
	\begin{align*}
	\E\Big[\Big(\frac{m}{i^2}\Big(\sum_{k=2}^\infty kL_k'(n)\geq m^\alpha\Big)^j\Big]m^{-\alpha j}= O(m^{-\alpha j}).
	\end{align*}
	Choosing an integer $j\geq 3/\alpha$ concludes the result.
\end{proof}

\subsection{First disconnections}
\label{sec:FirstDisconnections}
In this section, we consider the question of how many edges need to be removed to cause the (initially connected) configuration model to become disconnected. That is, we would like to prove Theorem~\ref{thm:Firstdisconnection}, and show that the most likely moment for the first disconnection to occur is after $\Theta_{\sss \mathbb P} (\sqrt m)$ edges have been removed.

To prove Theorem~\ref{thm:Firstdisconnection}, we use the equivalence between sequential edge removal and percolation as established in Lemma~\ref{lem:CascadePercolation}. More specifically, in order to prove Theorem~\ref{thm:Firstdisconnection}, it follows from Lemma~\ref{lem:CascadePercolation} that it suffices to show that in a percolation process with a removal probability of the order $q=\Theta(m^{-1/2})$ leads to a positive probability for the configuration model to remain connected.

\begin{proposition}\label{prop:DisconnectivityPercolation}
	Consider the graphs $CM_n(\mathbf{d},q)$ and $\overline{CM}_n(\mathbf{d},q)$. If $q=c/\sqrt{m}$ with $c \in (0,\infty)$, then
	\begin{align}
	\Prob (CM_n(\mathbf{d},q) \text{ is connected} )\to\left(\frac{d-2p_2}{d}\right)^{1/2}\exp\Big\{-\frac{2 c^2p_2}{d-2p_2}\Big\},
	\end{align}
	and
	\begin{align}
	\Prob( \overline{CM}_n(\mathbf{d},q)\text{ is connected}) \to \exp\Big\{-\frac{2 c^2p_2}{d-2p_2}\Big\}.
	\end{align}
\end{proposition}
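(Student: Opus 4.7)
My plan is to enumerate the small disconnected substructures of $CM_n(\mathbf{d}, q)$ at $q = c/\sqrt{m}$ and show their counts converge jointly to independent Poisson random variables, after which the connectivity probability is $\exp(-\text{total Poisson mean})$. By (the proof of) Proposition~\ref{prop:NumberOfOtherStuffOutsideGiant} specialized to $q = c/\sqrt{m}$, a direct re-examination of the bounds on $\mathbb P(F_1), \mathbb P(F_2), \mathbb P(F_3)$ shows that the expected number of edges in components containing a vertex of degree at least three is $o(1)$, so with high probability the only disconnecting structures are isolated vertices, isolated line components, and isolated cycle components, each consisting (to leading order) of original degree-$2$ vertices.

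I would then compute each asymptotic mean separately. A vertex of original degree $l$ becomes isolated after percolation with probability $q^l$, so the dominant contribution (from $l=2$) gives $\E[\#\text{isolated}] \to n_2 q^2 \to 2c^2 p_2/d =: \mu_V$. Standard enumeration of line subgraphs on $k$ original degree-$2$ vertices yields $\E[L_k] \to c^2 (2p_2/d)^k$, consistent with \eqref{eq:LinesMoreThanSquareRoot2}, and summing gives $\mu_L = c^2(2p_2/d)^2/(1-2p_2/d)$. For isolated cycles on $k$ degree-$2$ vertices, the analogous enumeration (together with the fact that a fixed cycle of length $k$ survives percolation with probability $(1-q)^k \to 1$) yields $\E[\#\text{cycles of length }k] \to (2p_2/d)^k/(2k)$, so $\mu_C = -\tfrac12 \log(1-2p_2/d)$.

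Next, I would upgrade these mean calculations to joint Poisson convergence by the method of factorial moments, combining Lemma~\ref{lem:MultiVariateMoments} for lines, Lemma~\ref{lem:NewDegreeDistribution} for $n_0'$ (isolated vertices), and the classical Poisson limit for cycle counts in the configuration model. Asymptotic independence of the three families follows because two distinct small subgraphs share a vertex with vanishing probability. Consequently
\begin{align*}
\Prob(CM_n(\mathbf{d}, q)\text{ is connected}) \to \exp(-\mu_V - \mu_L - \mu_C).
\end{align*}
A short algebraic simplification gives $\mu_V + \mu_L = 2c^2 p_2/(d-2p_2)$ and $\exp(-\mu_C) = \sqrt{(d-2p_2)/d}$, producing the first displayed formula.

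For the conditioned graph, note that edge removal cannot create connectivity, so $\{CM_n(\mathbf{d}, q) \text{ connected}\} \subseteq \{CM_n(\mathbf{d}) \text{ connected}\}$, and therefore
\begin{align*}
\Prob(\overline{CM}_n(\mathbf{d}, q) \text{ is connected}) = \frac{\Prob(CM_n(\mathbf{d}, q) \text{ is connected})}{\Prob(CM_n(\mathbf{d}) \text{ is connected})}.
\end{align*}
Setting $c=0$ in the first formula (equivalently, observing that $n_0 = n_1 = 0$ kills both the vertex and line contributions in the unpercolated graph, leaving only the cycle factor) gives $\Prob(CM_n(\mathbf{d}) \text{ connected}) \to \sqrt{(d-2p_2)/d}$, and dividing yields the second formula. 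The main obstacle I anticipate is the clean verification of joint Poisson convergence with asymptotic independence across the three families—particularly between lines and cycles, which compete for the same pool of degree-$2$ vertices—though this should follow from a multivariate extension of the mixed-moments computation underlying Lemma~\ref{lem:MultiVariateMoments}.
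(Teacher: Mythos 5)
Your route is genuinely different from the paper's. The paper constructs $CM_n(\mathbf{d},q)$ via the explosion algorithm and characterizes the connectivity event entirely in terms of $\Cmd$: it must have no degree-zero vertex, no isolated cycle, no line component of length $\geq 3$, and any length-$2$ lines must have both of their degree-one endpoints selected among the $R_n$ vertices removed in the final step. It then multiplies the corresponding Poisson probabilities (from Theorem~2.2 of~\cite{Federico2016}) and a hypergeometric probability for the length-$2$ removals. You instead enumerate the disconnecting structures directly in the \emph{final} percolated graph $\CMD$. Your means are all correct — I checked that $\mu_V = n_2 q^2 \to 2c^2p_2/d$, $\mu_L = 4c^2p_2^2/(d(d-2p_2))$, $\mu_C = -\tfrac12\log(1-2p_2/d)$, that $\mu_V + \mu_L = 2c^2 p_2/(d-2p_2)$, that your per-$k$ line formula $c^2(2p_2/d)^k$ agrees with~\eqref{eq:LinesMoreThanSquareRoot2}, and that your reduction to the conditioned graph is the same as the paper's. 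Your approach buys a conceptually cleaner final step (exponentiate the total Poisson mean) at the price of needing new Poisson limit theorems in $\CMD$, which the paper deliberately avoids by staying inside $\Cmd$.

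There is, however, a concrete gap in the ``upgrade to joint Poisson convergence'' step, which you cannot carry out by citing Lemmas~\ref{lem:NewDegreeDistribution} and~\ref{lem:MultiVariateMoments} as you propose. Those lemmas give the limits for $n_0'$ and for $L_k'(n)$ in $\Cmd$, \emph{before} the degree-one removals, and those limits are \emph{not} the same as your $\mu_V$ and $\mu_L$. Lemma~\ref{lem:NewDegreeDistribution} gives $n_0' \dto \mathrm{Poi}(c^2 p_2/(2d))$, which is a factor $4$ below your $\mu_V = 2c^2 p_2/d$; the difference is exactly the isolated vertices created in step~4 of the explosion (a length-$2$ line with one end removed, or a length-$3$ line with both ends removed). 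Similarly, Lemma~\ref{lem:MultiVariateMoments} gives $\E[L_k'] \to \tfrac{c^2}{4}(1+2p_2/d)^2(2p_2/d)^{k-2}$, which differs from your $c^2(2p_2/d)^k$. So the proposed combination does not go through as stated. To close the gap you would need either (i) to redo the mixed factorial-moment computation directly for the counts in $\CMD$ (with the extra layer of edge-removal randomness on top of the pairing — a genuinely new computation not present in the paper), or (ii) to track how the degree-one removal step transforms $(n_0', L_k', C_k')$ into the counts you need, which amounts to the hypergeometric bookkeeping the paper performs and would effectively recover the paper's proof. Your remaining worry about asymptotic independence between lines and cycles is real but standard (any two fixed small subgraphs intersect with probability $O(1/n)$), and that part would indeed follow the multivariate mixed-moment pattern of Lemma~\ref{lem:MultiVariateMoments}; the harder missing piece is the transfer from $\Cmd$ to $\CMD$.
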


\begin{proof}
	We build $\CMD$ using the explosion algorithm from Algorithm~\ref{alg:ExplosionAlgorithm}. To obtain the limiting probability that $\CMD$ is connected, we first require more detailed information on the degree sequence of $\textbf{d}'$. Observe that 
	\begin{align*}
	q = \frac{c}{\sqrt{m}}  = \frac{c}{\sqrt{d/2}} \frac{1}{\sqrt{n}}.
	\end{align*}
	Recall~\eqref{eq:ZeroDegreeNodeSumBoundExplosionAlg},~\eqref{eq:OneDegreeNodeSumBoundExplosionAlg},~\eqref{eq:RnLLN} and~\eqref{eq:NljConvergences}, and hence
	\begin{align}
	n'_0&= \sum_{j=2}^{\infty} n_{j,0} = n_{2,0} +\op(1) \dto Poi \left(\frac{c^2p_2}{2d}\right),\label{eq:NewDegreeSequenceZero}\\ 
	n'_1& =R_n+ \sum_{j=2}^{\infty} n_{j,1}= \sqrt{n} \left( \frac{c\sqrt{d}}{\sqrt{2}}+ \frac{c p_2}{\sqrt{d/2}}\right)(1+\op (1)).\label{eq:NewDegreeSequenceOne} 
	\end{align}
	
	\noindent
	In addition, we observe that $n_l - R_n  \leq n_l' \leq n_l+R_n$ for every $l \geq 2$, and hence with high probability,
	\begin{align*}
	p_l'= \lim_{n\rightarrow \infty} n_l'/n = p_l, \hspace{1cm} l \geq 2. 
	\end{align*}
	Moreover, we observe that the average degree of $\mathbf{d}'$ converges in probability to $d$. 
	
	\noindent
	Secondly, we note that removing vertices of degree one cannot disconnect a component. Therefore,
	there are only two ways for $\CMD$ to be connected after the explosion algorithm. That is, either $\Cmd$ is already connected, or $\Cmd$ consists of one (giant) component and other components that are lines of length two (the only component entirely made of vertices of degree one), where all vertices of the line components are removed in the final step of the explosion algorithm. In either case, we observe that one requires that $n_0'=0$, which occurs with probability
	\begin{align*}
	\Prob\left( n_0'=0 \right) = \Prob\Big(Poi\Big(\frac{c^2p_2}{2d}\Big)=0\Big)(1+\op(1)) = \exp\left\{-\frac{c^2 p_2}{2d}\right\}(1+\op(1)) .
	\end{align*}
	Theorem 2.2 of~\cite{Federico2016} implies that if $n_0'=0$, then with high probability the graph $\Cmd$ consists of a giant component, possible components that are lines, possible components that are cycles, but no other type of components. Recall that $L_k'(n)$ represents the number of components that are lines of length $k \geq 2$ in $\Cmd$, and $C_k'(n)$ the number of components that are cycles of length $k \geq 1$. We call component with a single vertex of degree two with a self-loop a cycle component of length one, and a component with two vertices with multi-edges between them a cycle of length two. Theorem~2.2 of~\cite{Federico2016} implies that 
	\begin{align*}
	L_k'(n) &\dto \textrm{Poi}\left(c^2\left(\frac{d}{2}+p_2\right)^2 \frac{(2p_2)^{k-2}}{d^k} \right), \hspace{1cm} k \geq 2,\\
	C_k'(n) &\dto \textrm{Poi}\left(\frac{(2p_2)^k}{2kd^k} \right), \hspace{1cm} k \geq 1,
	\end{align*} 
	and all limits are independent random variables. For $\CMD$ to be connected after the explosion algorithm, no cycles should appear in $\Cmd$, which occurs with probability
	\begin{align*}
	\lim_{n \rightarrow \infty} \Prob&\left(\textrm{no cycle components in } \Cmd \right) = \prod_{k=1}^\infty \lim_{n \rightarrow \infty} \Prob\left(C_k'(n) = 0 \right) \\
	&= \prod_{k=1}^\infty  \exp\left\{ - \frac{(2p_2)^k}{2kd^k} \right\} = \left( \frac{d-2p_2}{d}\right)^{1/2}.
	\end{align*}
	Also, no lines of length more than three should appear, which occurs with probability
	\begin{align*}
	\lim_{n \rightarrow \infty} \Prob\left(L_k'(n) =0 \;\;\; \forall k \geq 3 \right) &= \prod_{k=3}^\infty \lim_{n \rightarrow \infty}  \Prob\left(L_k'(n) = 0 \right) \\
	&= \prod_{k=3}^\infty  \exp\left\{-c^2(\frac{d}{2}+p_2)^2 \frac{(2p_2)^{k-2}}{d^k}\right\} = \exp\left\{-\frac{c^2}{d^2}\left(\frac{d}{2}+p_2\right)^2 \frac{2p_2}{d-2p_2}\right\}.
	\end{align*}
	Finally, we require that all vertices in the line components are removed in the final step of the explosion algorithm. That is, a set of $2 L_2'(n)$ vertices need to be removed out of the $R_n$ randomly chosen vertices of degree one. Note that the number of vertices that are removed from line components in the last step of the explosion algorithm is hypergeometrically distributed: we remove $R_n$ vertices uniformly at random from $n_1'$ vertices of which $2 L_2'(n)$ are in line components. Therefore, conditionally on $n_1'$, $R_n$ and $ L_2'(n)$, the probability of all vertices to be removed in the final step of the explosion algorithm is given by
	\begin{align*}
	\binom{n_1' - 2 L_2'(n)}{R_n-2 L_2'(n)} \Big/ \binom{n_1'}{R_n} &= \frac{R_n (R_n -1)\cdots (R_n-2 L_2'(n)+1)}{n_1'(n_1'-1) \cdots (n_1'-2 L_2'(n)+1)}
	\end{align*}
	In particular, if $L_2'(n)=0$, then with probability one all vertices in line components are removed in the last step of the algorithm. Using the tower property,~\eqref{eq:RnLLN},~\eqref{eq:NewDegreeSequenceOne} and the observation that $L_2'(n)$ converges to a Poisson distribution, we observe that
	\begin{align*}
	\Prob &\left( \textrm{all line components in } \Cmd \textrm{ are removed in } \CMD \right) \\
	&= \E\left[\E\left[\indi_{\{ \textrm{all line components in } \Cmd \textrm{ are removed in } \CMD\}} \mid n_1' , R_n, L_2'(n) \right]  \right]\\
	&= \sum_{k=0}^\infty \Prob(L_2'(n)=k) \E\left[ \binom{n_1' - 2 k}{R_n-2 k} \Big/ \binom{n_1'}{R_n} \mid n_1' , R_n, L_2'(n) = k\right] \\
	&= \sum_{k=0}^\infty \frac{1}{k!} \left(\frac{c^2}{d^2} \left( \frac{d}{2} +p_2\right)^2\right)^{k} \exp\left\{- \frac{c^2}{d^2} \left( \frac{d}{2} +p_2\right)^2 \right\} \left(\frac{d}{d+2p_2} \right)^{2k}(1+o(1))\\
	&=  \exp\left\{ -\frac{c^2}{d^2} \left( \frac{d}{2} +p_2\right)^2 \left(1 - \left(\frac{d}{d+2 p_2} \right)^2\right)\right\} (1+o(1)) = \exp\left\{ -\frac{c^2 p_2(d+p_2)}{d^2}\right\} (1+o(1)).
	\end{align*}
	We conclude that
	\begin{align*}
	\Prob &(\CMD \text{ is connected}) =  (1+o(1)) \sqrt{ \frac{d-2p_2}{d}} \exp\left\{-\frac{c^2 p_2}{2d}-\frac{c^2}{d^2}\left(\frac{d}{2}+p_2\right)^2 \frac{2p_2}{d-2p_2} -\frac{c^2 p_2(d+p_2)}{d^2}\right\}\\
	&= \left( \frac{d-2p_2}{d}\right)^{1/2} \exp\left\{-\frac{2 c^2 p_2}{d-2p_2}\right\}(1+o(1)).
	\end{align*}
	For the second statement of the proposition, it is known from~\cite{Luc92} that
	\begin{align*}
	\Prob (\CMd \text{ is connected})\to \left(\frac{d-2p_2}{d}\right)^{1/2},
	\end{align*}
	and since $\{ CM_n(\mathbf{d},q)\text{ is connected}\} \subseteq \{ \CMd\text{ is connected}\}$, the statement follows.
\end{proof}

Using the partial result of Proposition~\ref{prop:DisconnectivityPercolation}, we can prove Theorem~\ref{thm:Firstdisconnection}.

\Firstdisconnection*

\begin{proof}
	First, connectivity is a monotone property, and thus, once the graph is disconnected it will stay disconnected for the rest of the process. From Lemma~\ref{lem:CascadePercolation} it follows that sequential removal of $c \sqrt{m}$ edges in $\overline{CM}_n(\textbf{d},q)$ is well-approximated by a percolation process with removal probability $q=cm^{-1/2} (1+\op(1))$. Consequently, due to Proposition \ref{prop:DisconnectivityPercolation},
	\begin{align}
	\Prob (T_{n,\boldsymbol d} \geq x\sqrt{m}) = \Prob \left(CM_n(\mathbf{d},x/\sqrt{m}) \text{ is connected}\right) + o(1)  \to \exp\Big\{-\frac{2x^2p_2}{d-2p_2}\Big\}.
	\label{eq:ToReferToTnd}
	\end{align}
	In other words, $m^{-1/2} T_{n,\boldsymbol d}$ converges in distribution to a random variable $T$ whose complementary cumulative distribution function is given by the expression on the right-hand side of~\eqref{eq:ToReferToTnd}. 
\end{proof}

Proposition~\ref{prop:DisconnectivityPercolation} implies that the percolated graph $\overline{CM}_n(\mathbf{d},q)$ with removal probability~$q=o(m^{-1/2})$ is disconnected with probability of order $o(1)$. More detailed bounds can be provided for this range, as is illustrated by the next result.

\begin{proposition}\label{propo:DisconnectivityEarly}
	Consider $\overline{CM}_n(\mathbf{d},q)$ with $q=O(m^{-\alpha})$ for some $1/2 < \alpha < 1$. Then,
	\begin{align}
	\Prob( \overline{CM}_n(\mathbf{d},q)\text{ is disconnected}) =O(m^{1-2\alpha}).
	\end{align}
	Consequently, if we remove $i:=i_m$ edges uniformly at random from $\overline{CM}_n(\mathbf{d})$ with $i = o(m^{\beta})$ for some $\beta \in (0,1/2)$, then the probability of the resulting graph being disconnected is of order $O(m^{2\beta-1})$.
\end{proposition}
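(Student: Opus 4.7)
The strategy is to extend the argument of Proposition~\ref{prop:DisconnectivityPercolation} from the critical regime $q=c/\sqrt{m}$ to the entire subcritical window $q=O(m^{-\alpha})$ with $\alpha \in (1/2,1)$, tracking the quantitative dependence on $q$. Recall that, via the explosion algorithm, connectedness of $\overline{CM}_n(\mathbf{d},q)$ reduces to four sub-events: $\{n_0'=0\}$, absence of cycle components in $\Cmd$, absence of line components of length $\geq 3$ in $\Cmd$, and complete removal of the degree-one vertices of every length-two line component in the last step of the algorithm. In the critical regime each contributes a nontrivial limiting factor; for $\alpha > 1/2$ each instead contributes a factor of $1-O(m^{1-2\alpha})$, and the first claim follows from a union bound over the four sources.

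Quantitatively, applying \eqref{eq:ZeroDegreeNodeSumBoundExplosionAlg} with $h=2$ yields $\Prob(n_0' \neq 0)=O(nq^2)=O(m^{1-2\alpha})$. For the existence of a line component of length $\geq 2$ in $\Cmd$, the first-moment bound from the proof of Lemma~\ref{lem:MultiVariateMoments}, retained as a uniform inequality in $q$, gives $\E[\sum_{k\geq 2} L_k'(n)]=O(mq^2)=O(m^{1-2\alpha})$, so that Markov's inequality controls both the presence of long lines and the incomplete removal of short ones. More complex components outside the giant are handled by Proposition~\ref{prop:NumberOfOtherStuffOutsideGiant} together with \eqref{eq:OtherStuffdprime}, contributing $o(m^{1-2\alpha})$. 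The cycle-component term is the main technical obstacle: although the expected number of cycle components in $\Cmd$ is of order $1$ unconditionally, conditioning on $\overline{CM}_n(\mathbf{d})$ being connected excludes pre-existing cycle components, so only cycles created by the percolation rewiring contribute. A careful enumeration of candidate cycle subgraphs in the connected graph, weighted by the probability that all their external edges are removed and combined with the Janson-style expansion in Lemma~\ref{lem:NewDegreeDistribution}, shows that this contribution is also $O(mq^2)=O(m^{1-2\alpha})$. Summing the four contributions yields the first claim.

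For the second claim, the plan is to combine Lemma~\ref{lem:CascadePercolation} with the monotonicity of disconnectedness in the number of removed edges. Let $D_j$ denote the event that $\overline{CM}_n(\mathbf{d})$ becomes disconnected after $j$ uniformly chosen edge removals; then $D_j \subseteq D_{j+1}$. By Lemma~\ref{lem:CascadePercolation}, $\Prob(\overline{CM}_n(\mathbf{d},q) \textrm{ is disconnected}) = \sum_{j} \Prob(|E'|=j)\Prob(D_j)$ with $|E'| \sim \textrm{Bin}(m,q)$. Choosing $q=2i/m$, concentration of the binomial gives $\Prob(|E'|\geq i) \geq 1/2$ for $m$ large, hence by monotonicity $\Prob(D_i) \leq 2\,\Prob(\overline{CM}_n(\mathbf{d},q) \textrm{ is disconnected}) = O(mq^2) = O(i^2/m)$; with $i=o(m^\beta)$ this is $o(m^{2\beta-1})$, yielding the second claim. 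The hard part, as noted, will be the combinatorial bookkeeping for the percolation-induced cycle components, where the naive first-moment estimate loses the necessary $q$-dependence unless the pre-existing cycle components are carefully subtracted.
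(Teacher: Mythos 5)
Your strategy for the first claim is essentially the one the paper uses: build $CM_n(\mathbf{d},q)$ via the explosion algorithm, identify the several sources of disconnection, and observe that conditioning on initial connectivity eliminates pre-existing isolated cycles, so only percolation-created cycles need to be controlled. The paper organizes the bookkeeping slightly differently — it decomposes by component type outside the giant (isolated node, line, cycle, vertex of degree $\geq 3$) and cites Propositions~\ref{prop:NumberOfOtherStuffOutsideGiant} and~\ref{prop:NumberOfLinesNodesOutsideGiant}, rather than the four connectivity sub-events of Proposition~\ref{prop:DisconnectivityPercolation}; these two decompositions are interchangeable here. One small caution: those four sub-events characterize connectivity of $CM_n(\mathbf{d},q)$, not of $\overline{CM}_n(\mathbf{d},q)$, so you still need the passage through~\eqref{eq:ConnectedConditioning} for the node/line/complex terms and, crucially, the joint-event reduction $\{CM_n(\mathbf{d},q)\text{ has a cycle}\}\cap\{\CMd\text{ connected}\}\subseteq\{\text{newly formed cycle exists}\}$ for the cycle term — you flag this but should state it explicitly. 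Also, for the newly formed cycles you claim $O(mq^2)$; the paper's enumeration (a new cycle must contain a vertex of original degree $\geq 3$, and its spare half-edge must be removed) actually yields the tighter $O(q)=O(i/m)$, but since $q\gg m^{-1}$ your weaker estimate still lands inside $O(m^{1-2\alpha})$, so the slack is harmless.

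For the second claim you take a genuinely different and somewhat cleaner route. The paper couples the $i$-removal process to percolation through the order statistic $U^m_{(i)}$, splits on the event $\{U^m_{(i)}>m^{\beta-1}\}$, and kills the overshoot with a Chernoff bound. You instead use that disconnection is monotone in the number of removed edges, take $q=2i/m$, and use binomial concentration to get $\Prob(|E'(G(q))|\geq i)\geq 1/2$, hence $\Prob(D_i)\leq 2\,\Prob(\overline{CM}_n(\mathbf{d},q)\text{ is disconnected})$. This avoids introducing order statistics altogether and is a nice simplification. The only thing you should make explicit is that this step requires the first claim in the uniform form $\Prob(\overline{CM}_n(\mathbf{d},q)\text{ is disconnected})=O(mq^2)$ with the implicit constant independent of the subsequence of $q$'s — which the proof of the first claim indeed provides, but which does not follow formally from the statement ``for every fixed $\alpha$, $q=O(m^{-\alpha})$ implies $O(m^{1-2\alpha})$.''
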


\begin{proof}
	If $\overline{CM}_n(\mathbf{d},q)$ is disconnected, then there is at least one component outside the giant that is either an isolated node, a line, a cycle or a component that contains a vertex with degree at least three. To show the result, it therefore suffices to prove that each of these events occur with probability $O(m^{1-2\alpha})$.
	
	First, it follows from Proposition \ref{prop:NumberOfLinesNodesOutsideGiant} that in $\overline{CM}_n(\mathbf{d},q)$,
	\begin{align*}
	\Prob\Big(N_0(n)+\sum_{k=2}^\infty kL_k(n) \neq 0\Big)= O(m^{1-2\alpha}).
	\end{align*}
	In other words, the event that there is an isolated node or a line component (outside the giant) occurs with probability $O(m^{1-2\alpha})$. Moreover, we can apply Proposition \ref{prop:NumberOfOtherStuffOutsideGiant} to obtain that in $\overline{CM}_n(\mathbf{d},q)$,
	\begin{align*}
	\Prob( \exists\, \mathcal{C} \neq \Cmax\colon  d_{\max}( \mathcal{C}) \geq 3\}=o(m^{1-2\alpha}).
	\end{align*}
	Finally, we show that with probability $1-O(m^{1-2\alpha})$ percolation on $\overline{CM}_n(\mathbf{d},q)$ does not create cycles. We observe that
	\begin{align*}
	\Prob\left( \sum_{k=1}^\infty k C_k(n) \neq 0 \,\big\vert\, \CMd \textrm{ is connected} \right)  &= \frac{\Prob\left( \sum_{k=1}^\infty k \tilde{C}_k(n) \neq 0 \colon \CMd \textrm{ is connected} \right)}{\Prob\left(\CMd \textrm{ is connected} \right)} \\
	&\leq \frac{\Prob\left( \sum_{k=1}^\infty k \tilde{C}_k(n) \neq 0 \right)}{\Prob\left(\CMd \textrm{ is connected} \right)},
	\end{align*}
	where $\tilde{C}_k(n)$ denotes the number of newly formed isolated cycles in $CM_n(\mathbf{d},q)$ that do not exist in the initial graph $\CMd$. Since $\CMd$ is connected with non-vanishing probability, it is sufficient to show that
	\begin{align*}
	\Prob\left( \sum_{k=1}^\infty k \tilde{C}_k(n) \neq 0 \right) = O(m^{1-2\alpha}).
	\end{align*}
	Again, we use the explosion method. We stress that running this algorithm on a sampled $\CMd$ results in a graph that is not the same as the graph that would have been obtained if percolation had been applied on the same sampled $\CMd$. Instead, sampling a $\CMd$ and running the explosion algorithm results in a graph that is statistically indistinguishable from one that is obtained by sampling another $\CMd$ and applying percolation on it. Therefore, we need to consider the question how to bound newly formed cycles in $CM_n(\mathbf{d},q)$. The crucial observation to answer this question is that such cycles contain at least one node whose degree in $\mathbf{d}$ was at least three.
	
	More specifically, the number of newly formed isolated cycles in $CM_n(\mathbf{d},q)$ is at most the number of cycles in $CM_n(\mathbf{d},q)$ that contain at least one node whose degree in $\mathbf{d}$ is at least three. Write $Z_n$ for the number of vertices whose degree in $\mathbf{d}'$ is two, but has degree at least three in $d$, i.e.
	\begin{align*}
	Z_n = \sum_{l=3}^\infty n_{l,2}.
	\end{align*}
	Every isolated cycle in $CM_n(\mathbf{d},q)$ that contains at least one node having an original degree at least three in $\mathbf{d}$, either that cycle already exists in $CM_N(\mathbf{d}')$, or it was formed from a component in $CM_N(\mathbf{d}')$ by the removal of degree one vertices. In the second case, this implies that there exists a component in $CM_N(\mathbf{d}')$ outside the giant with a maximum degree at least three, which occurs with probability $o(m^{1-2\alpha})$ by Proposition~\ref{prop:NumberOfOtherStuffOutsideGiant}. What remains is to analyze the first case.
	
	Write $\tilde{C}_k'(n)$ for the number of cycles that exist in $CM_N(\mathbf{d}')$ and contain at least one node whose original degree in $\mathbf{d}$ is at least three. For any $k \geq 1$, a set of (distinct) degree-two vertices $\{v_1,...,v_k\}$ in $\mathbf{d}'$ forms a cycle in $CM_N(\mathbf{d}')$ with probability
	\begin{align*}
	\frac{2(k-1)}{2m-1} \frac{2(k-2)}{2m-3} \cdots \frac{2}{2m-2k+3} \frac{1}{2m-2k+1}
	\end{align*}
	if $k\geq 2$, and $1/(2m-1)$ if $k=1$. The number of sets of $k$ vertices in $\mathbf{d}'$ that all have degree two of which at least one vertex has degree at least three in $\mathbf{d}$ is bounded by $Z_n \binom{n_2'}{k-1}$. Therefore,
	\begin{align*}
	\E[\tilde{C}_k'(n)] \leq \E\left[ Z_n \binom{n_2'}{k-1} \frac{(2(k-1))!!(2m-2k-1)!!}{(2m-1)!!}\right].
	\end{align*}
	We observe that $n_2'\leq n_2+R_n$ and $Z_n \leq R_n$, where $R_n$ is a binomially distributed random variable  with parameters $2m$ and $1-\sqrt{1-q}=i/(2m)(1+o(1))$. Therefore, for every $\epsilon >0$, the probability that $R_n \geq (1+\epsilon)i$ occurs has an exponentially decaying tail. On the other hand, since $n_2/n \rightarrow p_2$, it holds for every $\epsilon>0$ and all $n$ sufficiently large,
	\begin{align*}
	\E[\tilde{C}_k'(n) \mid R_n \leq (1+\epsilon)i ] &\leq  i(1+\epsilon) \binom{n_2+(1+\epsilon)i}{k-1} \frac{(2(k-1))!!(2m-2k-1)!!}{(2m-1)!!} = (1+\epsilon)\frac{i}{2m} \left(\frac{2 p_2+\epsilon}{d-\epsilon}\right)^{k-1}
	\end{align*}
	and hence this sequence converges to zero exponentially fast in $k$. Applying dominated convergence, we obtain
	\begin{align*}
	\E\left[ \sum_{k=1}^\infty k \tilde{C}_k'(n) \mid R_n \leq (1+\epsilon)i\right] = O\left(\frac{i}{m} \right) = o(m^{1-2\alpha}).
	\end{align*}
	By Markov's inequality, we conclude that
	\begin{align*}
	\Prob\left( \sum_{k=1}^\infty k \tilde{C}_k(n) \neq 0\right) &= \E\left( \sum_{k=1}^\infty k \tilde{C}_k'(n) \neq 0 \mid R_n \leq (1+\epsilon)i \right) + o(m^{1-2\alpha}) = o(m^{1-2\alpha}).
	\end{align*}
	This proves the first statement of the proposition.
	
	To prove the second statement of the proposition, we need to relate the percolation process to the process of removing edges uniformly at random from $\overline{CM}_n(\mathbf{d})$. To each edge, attach a uniformly distributed random variable. An alternative description of the percolation process with removal probability $q$ is by removing all edges whose values of the random variable are below $q$. Let $U_{(1)}^m \leq ... \leq U_{(m)}^m$ denote $m$ uniformly distributed order statistics. Then, the probability that $i$ edge removals disconnects $\overline{CM}_n(\mathbf{d})$ is given by $\Prob( \overline{CM}_n(\mathbf{d}, U_{(i)}^m ) \textrm{ is disconnected})$. We note that
	\begin{align*}
	\Prob&\left( \overline{CM}_n\left(\mathbf{d}, U_{(i)}^m \right) \textrm{ is disconnected}\right) \leq \Prob\left( \overline{CM}_n\left(\mathbf{d}, m^{\beta-1} \right) \textrm{ is disconnected}\right) + \Prob\left( U_{(i)}^m > m^{\beta-1} \right).
	\end{align*}
	The above proof shows that
	\begin{align*}
	\Prob\left( \overline{CM}_n\left(\mathbf{d}, m^{\beta-1} \right) \textrm{ is disconnected}\right) = O(m^{2\beta-1}).
	\end{align*}
	The second term is negligible, since by the Chernoff bound,
	\begin{align*}
	\Prob\left( U_{(i)}^m > m^{\beta-1} \right) \leq (1+o(1)) \exp\left\{ -\frac{m^\beta}{2}\right\}  = O(m^{2\beta-1}).
	\end{align*}
\end{proof}

\subsection{Number of edges outside the giant component}\label{sec:NumberOfEdgesOutsideTheGiant}
First, we prove Theorem \ref{thm:FailureSublinear} putting together the results proved in the Section~\ref{sec:TypicalStructuresPercolation}.

\FailureSublinear*

\begin{proof}[Proof of Theorem \ref{thm:FailureSublinear}] By Lemma \ref{lem:CascadePercolation}, the number of edges outside the largest component in $\overline{CM}_n(\mathbf{d})$ after $i$ failures can be derived by considering percolation on this graph with removal probability $q=i/m$. The edges outside the giant can be divided into edges in line components, edges in cyclic components and edges in more complex components (i.e. components which contain a vertex of degree at least three).
	From Proposition~\ref{prop:NumberOfLinesNodesOutsideGiant} it follows that 
	\begin{align*}
	\frac{m}{i^2}\Big(\sum_{k=2}^\infty (k-1)L_k(n)\Big)\pto \frac{ 4 p_2^2}{(d-2p_2)^2}.
	\end{align*}
	
	Next we bound the number of edges outside the giant that are contained in components that are cycles or contained in more complex components. In view of Remarks~\ref{rem:NonVanishingProbabilityInitiallyConnected} and~\ref{rem:CanAlsoLookAtExplodedCMd}, we point out that this is bounded by the same quantity in $\Cmd$. Therefore, to conclude the proof, it suffices to show that the total number of edges contained in cycle components or more complex components outside the giant is of order $\op(i^2/m)$ for $\Cmd$. 
	
	By Proposition~\ref{prop:NumberOfOtherStuffOutsideGiant}, the number of edges in $\Cmd$ outside the giant that is contained in a more complex component is of order $\op(i^2/m)$. To count the number of edges in cycle components, recall that $C'_k(n)$ denotes the number of  cyclic components with $k$ edges in $\Cmd$, and that it satisfies~\eqref{eq:CycleComponentBound}, i.e. $\lim_{n \to \infty} \E[\sum_{k \geq 1}kC_k'(n)]<\infty$. Using Markov's inequality, this implies that
	\begin{align*}
	\frac{m}{i^2} \sum_{k \geq 1}kC_k'(n) = \op(1).
	\end{align*}
	Consequently, the number of edges in $\Cmd$ outside the giant that are not contained in line components is indeed of order $\op (i^2/m)$. 
\end{proof}

Theorem~\ref{thm:FailureSublinear} prescribes the likely number of edges outside the giant component as an effect of sequentially removing edges uniformly at random. The initially connected configuration model is likely to disintegrate, as more edges are removed, in a unique giant and several small components, the majority of which are either lines or isolated nodes as long as the number of edge failures is sublinear. 

Next, we show that the number of edges outside the giant component in $\CMD$ is unlikely to be of a higher order of magnitude than its typical value during the cascade. We stress that this results concerns the percolation process, which can in turn be used to derive a large deviations bound for the sequential removal process.

\begin{theorem}\label{thm:DeviationEdgesOutsideGiant}
	Consider $CM_n(\mathbf{d},q)$ and $\overline{CM}_n(\mathbf{d},q)$ with $q=i/m$, with $\sqrt{m}\ll i\ll m^{1-\delta}$ for some $\delta>0$. Then, for every $\alpha>0$,
	\begin{align}
	\Prob\Big(|E(\CMD \setminus \Cmax)| \frac{m}{i^2}\geq m^\alpha\Big)=O(m^{-3}), 
	\label{eq:OutsideGiantDeviationsNotConnectedCM}
	\end{align}
	and
	\begin{align}
	\Prob\Big(|E(\overline{CM}_n(\mathbf{d},q) \setminus \Cmax)| \frac{m}{i^2}\geq m^\alpha\Big)=O(m^{-3}).
	\label{eq:OutsideGiantDeviationsConnectedCM}
	\end{align}
\end{theorem}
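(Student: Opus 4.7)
The plan is to decompose the edges outside the giant in $\CMD$ according to the structure of the component they belong to: line components, cycle components, and \emph{complex} components (those containing at least one vertex of degree at least three). Isolated nodes contribute nothing to the edge count. By Remark~\ref{rem:NonVanishingProbabilityInitiallyConnected}, inequality~\eqref{eq:ConnectedConditioning} together with $\liminf_n \Prob(\CMd \text{ is connected})>0$ shows that~\eqref{eq:OutsideGiantDeviationsConnectedCM} follows from~\eqref{eq:OutsideGiantDeviationsNotConnectedCM}, so it suffices to prove the first claim. By Remark~\ref{rem:CanAlsoLookAtExplodedCMd}, with exponentially high probability the number of edges of $\CMD \setminus \Cmax$ in each of these three categories is dominated by the corresponding number inside the intermediate graph $\Cmd$ produced by the explosion algorithm. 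Thus I only need to prove the bound inside $\Cmd$.

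For complex components, Proposition~\ref{prop:ExponentialDecayofOtherStuffOutsideGiant} already gives the required $O(m^{-3})$ bound. For line components, Lemma~\ref{prop:LinesDecay} gives the same bound on the event $\{(m/i^2)\sum_{k\geq 2}(k-1)L_k'(n) \geq m^\alpha\}$. The only remaining contribution is the number of edges in cycle components of $\Cmd$. Section~\ref{sec:TypicalStructuresPercolation} only supplies the first-moment bound~\eqref{eq:CycleComponentBound}, and this is where the main obstacle lies: to reach $O(m^{-3})$ one must upgrade it to a uniform bound on higher moments of $S_n := \sum_{k\geq 1}kC_k'(n)$.

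To obtain such a bound, I would compute factorial moments in the spirit of Lemma~\ref{lem:MultiVariateMoments}. A prescribed set of $k$ distinct degree-two vertices in $\mathbf{d}'$ forms a cycle under the configuration-model pairing with probability of order $m^{-1}(2p_2/d)^{k-1}$, so by a direct counting argument $\E[C_k'(n)] \leq C (2p_2/d)^{k}/k$ uniformly in $n$ and $k$, using that $d > 2p_2$ under Condition~\ref{con:RegularConfiguration} (the same inequality that guarantees the existence of a giant component). An analogous computation for the $j$-th factorial moment of $(C_k'(n))_{k\geq 1}$ bounds each term by a product of geometrically decaying factors in the $j$ chosen cycle lengths, yielding $\E[S_n^j] \leq M_j$ for some $M_j<\infty$ independent of $n$. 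Since $i \gg \sqrt{m}$ implies $m^\alpha i^2/m \geq m^\alpha$, Markov's inequality gives
\[
\Prob\left(\frac{m}{i^2}S_n \geq \frac{m^\alpha}{3}\right) \leq \frac{M_j\, 3^j}{m^{\alpha j}},
\]
and any integer $j \geq 3/\alpha$ makes the right-hand side $O(m^{-3})$. A union bound over the three categories (lines, cycles, complex) then produces~\eqref{eq:OutsideGiantDeviationsNotConnectedCM} up to replacing $m^\alpha$ by $3m^\alpha$, which is absorbed by shrinking $\alpha$; the corresponding statement~\eqref{eq:OutsideGiantDeviationsConnectedCM} for $\overline{CM}_n(\mathbf{d},q)$ then follows from the first paragraph.
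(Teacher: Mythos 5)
Your proposal follows the paper's proof almost step for step: reduce to $\Cmd$ via Remarks~\ref{rem:NonVanishingProbabilityInitiallyConnected} and~\ref{rem:CanAlsoLookAtExplodedCMd}, split edges outside the giant into line, cycle, and complex components, invoke Lemma~\ref{prop:LinesDecay} and Proposition~\ref{prop:OtherStuffDecay} for two of the three pieces, bound high moments of $\sum_k k C_k'(n)$ for the third, and finish with Markov's inequality and a union bound. The one substantive difference is the cycle contribution: the paper simply cites~\cite[(3.7)]{Federico2016} for convergence of all factorial moments of the cycle counts (noting the degree-one condition there is irrelevant to cycles), whereas you propose to re-derive the moment bound from scratch in the style of Lemma~\ref{lem:MultiVariateMoments}; both routes land on the same $\E[S_n^j] \le M_j$ and neither is materially harder, so this is a cosmetic rather than structural divergence. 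Two small inaccuracies worth flagging: the per-set cycle probability is $(2k-2)!!/(2m)^k$, not $m^{-1}(2p_2/d)^{k-1}$ (your final bound $\E[C_k'(n)] = O((2p_2/d)^k/k)$ is nevertheless correct); and Remark~\ref{rem:CanAlsoLookAtExplodedCMd} does not give categorical domination as you claim---a complex component of $\Cmd$ can become a cycle or a line after removing degree-one vertices, so the cycle edge count in $\CMD$ need not be bounded by that in $\Cmd$---but it does give total domination, which is all your argument actually uses.
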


\begin{proof}
	First, we show~\eqref{eq:OutsideGiantDeviationsNotConnectedCM} by using the explosion algorithm. Again, in view of Remarks~\ref{rem:NonVanishingProbabilityInitiallyConnected} and~\ref{rem:CanAlsoLookAtExplodedCMd}, it suffices to show
	\begin{align}\label{eq:IntermediateStuffOutsideGiant}
	\Prob\Big(|E(\Cmd \setminus \Cmax)| \frac{m}{i^2}\geq m^\alpha\Big)=O(m^{-3}). 
	\end{align}
	We partition the different kinds of contributions to the total number of edges outside the giant of $\Cmd$ as follows: edges can be contained in a line, a cycle or a more complex component. Due to Proposition~\ref{prop:LinesDecay}, the number of edges in line components in $\Cmd$ is bounded by 
	\begin{align}
	\Prob\Big(\sum_{k=2}^\infty (k-1)L_k'(n)\frac{m}{i^2}\geq m^\alpha\Big)=O(m^{-3}).
	\end{align}
	To bound the edges in cycles, we point out that if follows from~\cite[(3.7)]{Federico2016} that all moments of the number of cycles in $\Cmd$ converge to a finite constant. That is, convergence of the first $j$ factorial moments is shown, which implies convergence of the first $j$ moments. Again, this result is shown under the condition that the number of vertices of degree one is of order~$O(\sqrt m)$ in~\cite{Federico2016}, but this assumption is not used for the results with respect to the isolated cycles. Consequently, for every~$j$, as long as $p_2\in (0,1)$,
	\begin{align*}
	\lim_{n\to \infty}	\E\Big[\Big(\sum_{k=1}^\infty kC'_k(n)\Big)^j\Big]\leq \E\Big[\Big( \sum_{k=1}^\infty Poi \Big(\frac{(2p_2)^k}{2kd^k}\Big) k \Big)^j\Big]<\infty,
	\end{align*}
	where the Poisson variables in the second term are all independent. Therefore, for every $\alpha >0$, by applying Markov's inequality, we obtain
	\begin{align*}
	\Prob\Big(\frac{m}{i^2}\sum_{k=1}^\infty kC_k'(n)\geq m^\alpha\Big) &\leq \Prob\Big(\sum_{k=1}^\infty kC_k'(n)\geq m^\alpha\Big) \leq \E\Big[\Big(\sum_{k=1}^\infty kC_k'(n)\Big)^{3/\alpha}\Big]m^{-3}=O(m^{-3}).
	\end{align*}
	
	\noindent
	To bound the number of edges in other components we use Proposition~\ref{prop:OtherStuffDecay} to obtain 
	\begin{align*}
	\Prob\Big(\frac{m\# \{ e \notin \Cmax\colon d_{\max}(\mathcal C(e)) \geq 3\}}{i^2}\geq m^{\alpha}\Big) = O(m^{-3}).
	\end{align*}
	Thus we obtain~\eqref{eq:IntermediateStuffOutsideGiant} by summing the three different contributions. 
\end{proof}

Theorem~\ref{thm:DeviationEdgesOutsideGiant} reveals that it is very unlikely for the number of edges outside the giant to be larger than of order $\Theta(i^2/m)$ when applying the percolation process. We can use this result to show that this also holds under a sequential removal process. That is, we can prove Theorem~\ref{thm:DeviationsOutsideGiant}, which we recall next.

\DeviationsOutsideGiant*

\begin{proof}[Proof of Theorem~\ref{thm:DeviationsOutsideGiant}]
	First, we prove~\eqref{eq:FirstClaimCorollaryEdgesOutsideGiant} for $i$ such that $m^{1/2+\epsilon} \ll i \ll m^\alpha$ for some $\epsilon \in (0,\alpha-1/2)$ sufficiently small. We use this partial result to show~\eqref{eq:SecondClaimCorollaryEdgesOutsideGiant}. In the proof of~\eqref{eq:SecondClaimCorollaryEdgesOutsideGiant}, it is implied that~\eqref{eq:FirstClaimCorollaryEdgesOutsideGiant} also holds for all $i \ll m^{1/2+\epsilon}$ for some $\epsilon \in (0,\alpha-1/2)$ sufficiently small, concluding the proof of Theorem~\ref{thm:DeviationEdgesOutsideGiant} follows.
	
	Let $U_{(1)}^m \leq ... \leq U_{(m)}^m$ denote $m$ uniformly distributed order statistics, and note that
	\begin{align*}
	\Prob\left( |\hat{E}_m(i)| \leq m-i-i^\alpha \right) = \Prob\left( \big|E(\overline{CM}_n(\textbf{d},U_{(i)}^m)\backslash \Cmax )\big| > i^\alpha \right).
	\end{align*}
	Let $\epsilon>0$ (sufficiently) small, and note that by the Chernoff bound,
	\begin{align*}
	\Prob\left(U_{(i)}^m  \not\in   \left[ \frac{i}{m} m^{-\epsilon},\frac{i}{m} m^{\epsilon}\right] \right) = O(m^{-3}).
	\end{align*}
	Therefore,
	\begin{align*}
	\Prob&\left(E( \big|\overline{CM}_n(\textbf{d},U_{(i)}^m)\backslash \Cmax) \big| > i^\alpha \right) \leq \max_{q \in \left[i m^{-(1+\epsilon)},i m^{-(1-\epsilon)}\right] } \Prob\left( \big|E(\overline{CM}_n(\textbf{d},q)\backslash \Cmax )\big| > i^\alpha \right) + O(m^{-3}).
	\end{align*}
	We observe that for every $q \in \left[i m^{-(1+\epsilon)},i m^{-(1-\epsilon)}\right]$ with $i = o(m^{\alpha})$,
	\begin{align*}
	\frac{q^2 m}{i^\alpha} \leq \frac{i^{2-\alpha}}{m^{1-2\epsilon}} = o( m^{-(1-\alpha)^2+2\epsilon}) = o(1)
	\end{align*}
	for all $\epsilon >0$ sufficiently small. By Theorem~\ref{thm:DeviationEdgesOutsideGiant}, we conclude that
	\begin{align*}
	\Prob&\left( |\hat{E}_m(i)| \leq m-i-i^\alpha \right) \leq \max_{q \in \left[i m^{-(1+\epsilon)},i m^{-(1-\epsilon)}\right]} \Prob\left( \big|E(\overline{CM}_n(\textbf{d},q)\backslash \Cmax) \big| > q^2/m \right) + O(m^{-3}) =O(m^{-3}).
	\end{align*}
	
	To prove~\eqref{eq:SecondClaimCorollaryEdgesOutsideGiant}, note that by Proposition~\ref{propo:DisconnectivityEarly},
	\begin{align*}
	\Prob&\left( m-i - |\hat{E}_m(i)| > i^\alpha \textrm{ for some } 1 \leq i \leq m^{1/8} \right) \\ 
	&\hspace{2cm} \leq \Prob\left( m-i - |\hat{E}_m(i)| \neq 0 \textrm{ for some } 1 \leq i \leq m^{1/8} \right) = o(m^{-1/2}).
	\end{align*}
	Moreover, if $k \gg m^{1/2+\epsilon}$ for some $\epsilon \in (0,1/2)$, using that $k \leq m$, it follows directly from~\eqref{eq:FirstClaimCorollaryEdgesOutsideGiant},
	\begin{align*}
	\Prob&\left( m-i - |\hat{E}_m(i)| > i^\alpha \textrm{ for some } m^{1/2+\epsilon} \leq i \leq k \right) = O(k m^{-3}) = o(m^{-1/2}).
	\end{align*}
	Therefore, to conclude~\eqref{eq:SecondClaimCorollaryEdgesOutsideGiant}, it suffices to show that for some $\epsilon \in (0,1/2)$ sufficiently small,
	\begin{align*}
	\Prob&\left( m-i - |\hat{E}_m(i)| > i^{1/8} \textrm{ for some } m^{1/8} \leq i \leq m^{1/2+\epsilon} \right) = o(m^{-1/2}).
	\end{align*}
	For convenience, write $|\tilde{E}_m(i)| = m-i - |\hat{E}_m(i)| $, and consider values of $i$ such that $m^{1/8} \leq i \leq m^{1/2+\epsilon}$ for some $\epsilon \in (0,1/2)$. Note that
	\begin{align*}
	\Prob&\left(|\tilde{E}_m(i)| > i^{1/8}\right) = \Prob\left(|\tilde{E}_m(i)| > i^{1/8} , |\tilde{E}_m(m^{1/2+\epsilon})| < |\tilde{E}_m(i)|/2 \right) \\
	&\hspace{3.5cm}+ \Prob\left(|\tilde{E}_m(i)| > i^{1/8} , |\tilde{E}_m(m^{1/2+\epsilon})| \geq |\tilde{E}_m(i)|/2 \right).
	\end{align*}
	We observe that $|\tilde{E}_m(i)|$ is the number of edges outside the giant when removing $i$ edges uniformly at random. Write $\xi(i,j)$ as the number of edges that are removed out of this set of $|\tilde{E}_m(i)|$ edges if we remove another $j-i$ edges uniformly at random. Then, $|\tilde{E}_m(m^{1/2+\epsilon})|\geq |\tilde{E}_m(i)|-\xi(i,m^{1/2+\epsilon})$, and hence
	\begin{align*}
	\Prob&\left(|\tilde{E}_m(i)| > i^{1/8} , |\tilde{E}_m(m^{1/2+\epsilon})| < |\tilde{E}_m(i)|/2 \right)  \leq \Prob\left(|\tilde{E}_m(i)| > i^{1/8} ,  \xi(i,m^{1/2+\epsilon}) > |\tilde{E}_m(i)|/2 \right).
	\end{align*}
	Since $\xi(i,j) \leq j$ for every $j \geq i$, it follows from the first claim of the corollary that with probability $1-O(m^{-3})$,
	\begin{align*}
	|\tilde{E}_m(i)| \leq |\tilde{E}_m(m^{1/2+\epsilon})| + \xi(i,m^{1/2+\epsilon}) = o(m).
	\end{align*}
	In other words, the probability that an edge out of the set of $|\tilde{E}_m(i)|$ is chosen to be removed is strictly bounded by $|\tilde{E}_m(i)|/(m-m^{1/2+\epsilon}) =o(1)$ with probability  $1-O(m^{-3})$. In that case, the probability for more than half of the $|\tilde{E}_m(i)| > i^{1/8}$ edges to be removed has an exponentially decaying tail. In particular, this implies that
	\begin{align*}
	\Prob&\left(|\tilde{E}_m(i)| > i^{1/8} , |\tilde{E}_m(m^{1/2+\epsilon})| < |\tilde{E}_m(i)|/2 \right)  \leq \Prob\left(|\tilde{E}_m(i)| > i^{1/8} ,  \xi(i,m^{1/2+\epsilon}) > |\tilde{E}_m(i)|/2 \right) = O(m^{-3}).
	\end{align*}
	For the other term, we observe that for every $m^{1/8} \leq i \leq m^{1/2+\epsilon}$,
	\begin{align*}
	\Prob\left(|\tilde{E}_m(i)| > i^{1/8} , |\tilde{E}_m(m^{1/2+\epsilon})| \geq \frac{|\tilde{E}_m(i)|}{2} \right) &\leq \Prob\left( |\tilde{E}_m(m^{1/2+\epsilon})| > \frac{i^{1/8}}{2} \right) \leq \Prob\left( |\tilde{E}_m(m^{1/2+\epsilon})| > \frac{m^{1/64}}{2} \right).
	\end{align*}
	Similarly as in the proof of the first claim of the corollary, we observe that
	\begin{align*}
	\Prob\left(U_{(m^{1/2+\epsilon})}^m \not\in \left[  m^{-1/2-2\epsilon},  m^{-1/2+2\epsilon} \right] \right) = O(m^{-3}),
	\end{align*}
	and hence
	\begin{align*}
	\Prob&\left( |\tilde{E}_m(m^{1/2+\epsilon})| > 2m^{1/64} \right) \leq \Prob\left(U_{(m^{1/2+\epsilon})}^m \not\in \left[  m^{-1/2-2\epsilon},  m^{-1/2+2\epsilon} \right] \right) \\
	&\hspace{3cm} + \max_{q \in [ m^{-1/2-2\epsilon},  m^{-1/2+2\epsilon}]} \Prob\left( \big|E(\overline{CM}_n(\textbf{d},q)\backslash \Cmax) \big| > 2m^{1/64} \right).
	\end{align*}
	It follows from Theorem~\ref{thm:DeviationEdgesOutsideGiant} that the second term is also of order $O(m^{-3})$ for every $\epsilon< 1/256$. We conclude that for every $m^{1/8} \leq i \leq m^{1/2+\epsilon}$ with $\epsilon < 1/256$,
	\begin{align*}
	\Prob\left(|\tilde{E}_m(i)| > i^{1/8}\right) = O(m^{-3}),
	\end{align*}
	from which~\eqref{eq:SecondClaimCorollaryEdgesOutsideGiant} follows by the union bound. Moreover, it implies that~\eqref{eq:FirstClaimCorollaryEdgesOutsideGiant} holds for all $\sqrt{m} \ll i \ll m^{1/2+\epsilon}$ for some $\epsilon \in (0,\alpha-1/2)$ as well.
\end{proof}

\subsection{Linear number of edge removals}\label{sec:LinearEdgeFailures}
For completeness, we provide a brief overview of known results about what $\CMD$ looks like when the removal probability is a fixed value $q \in (0,q_c)$, as studied in~\cite{Jan09b}. It is shown that for any fixed $q \in (0, q_c)$, $\CMD$ has a unique giant component and many small components. However, in this phase the giant does not contain $n-o(n)$ vertices, as in the case when $q\to 0$. 

From \cite{Jan09b}, it is known that there exists a function $\xi_{\mathbf{d}} (q)$ defined for $q < q_c$ such that in $CM_n(\mathbf{d},q)$ 

\begin{align}\label{eq:EdgesGiant}
\frac{|E(\Cmax)|}{(1-q)m}\pto \xi_{\mathbf{d}} (q),
\end{align}
i.e., the proportion of edges in the giant component concentrates for every $q<q_c$. The exact formula for $\xi_{\mathbf{d}} (q)$ comes from \cite[Theorem 3.9]{Jan09b},
\begin{align}
\xi_{\mathbf{d}} (q)=\frac{1-\rho}{\sqrt{1-q}}+\frac{(1-\rho)^2}{2},
\end{align}
where $\rho$ is defined as the solution of the equation

\begin{align}
(1-q)^{1/2}G'_D\big(1-(1-q)^{1/2}-\rho(1-q)^{1/2}\big)+(1-(1-q)^{1/2})d=\rho d,
\end{align}  
where $G_D$ is the probability generating function of $D$. The same concentration result, with a different limit function, holds also for the number of vertices in the largest component.

It is worth mentioning that in this case, lines and isolated vertices do not constitute the vast majority of the small components anymore, but there is also a positive density of more complex small components to appear. These are mainly trees.

If $q>q_c$, instead, there is no giant component that contains a non-vanishing proportion of the edges or vertices and usually the high-degree vertices determine the size of the largest components, i.e.~$|\Cmax|=\Theta_\Prob (d_{max})$~\cite{Jan08}.

\section{Cascading failure process}\label{sec:CascadingFailureProcess}
The results in the previous section explain the way the connected configuration model is likely to disintegrate as edge failures occur sequentially and uniformly at random. This yields the load surge values, and hence what remains to be done for proving Thoerem~\ref{thm:MainResultSublinear}, is to compare the load surge values to the corresponding surplus capacities at the edges.

To prove our main results as stated in Theorem~\ref{thm:MainResultSublinear}, we follow the proof strategy as laid out in Section~\ref{sec:RoadMapCascFailureProcess}. Recall that the connected disintegrates in a giant component and a sublinear number of edges outside the giant as long as no more than $o(m)$ edges are removed. We point out that, intuitively, this implies that the only dominant contribution to the total failure size comes from the number of edges that are contained in the giant component upon failure. We make this statement rigorous in this section.

This section is structured as follows. In Section~\ref{sec:CascadingNoEdgeDisconnections}, we consider the setting where no disconnections takes place yet. Since it follows from Theorem~\ref{thm:Firstdisconnection} that it is unlikely that the connected configuration model becomes disconnected before $\Theta(\sqrt{m})$, we can show that Theorem~\ref{thm:FailureSublinear} holds if $k=o(\sqrt{m})$. For larger thresholds, we consider the failure size tail of the giant component. To derive this tail behavior, we translate this problem to a first-passage time problem over a random moving boundary in Section~\ref{sec:RWFormulation}. In Section~\ref{sec:AsympBehvGiant}, we derive the behavior of this first-passage time. We conclude Theorem~\ref{thm:FailureSublinear} in Section~\ref{sec:ProofOfMainResultRG} by using the strategy as laid out in Section~\ref{sec:RoadMapCascFailureProcess}.

\subsection{No edge disconnections}\label{sec:CascadingNoEdgeDisconnections}
Before we move to the tail of the failure size, we first consider the scenario where no disconnections have occurred yet during the cascade, or alternatively, the setting where the failure mechanism is applied to a graph with a star topology. As long as edge failures do not cause (edge) disconnections in the graph, the load surge function remains the same at every surviving edge. Recall that in this case it holds that $|E_j^m(i)|=m-i$ for all surviving edges $e_j \in [m]$, and hence recursion~\eqref{eq:LoadSurgeRecursion} is solved by
\begin{align}\label{eq:LoadSurgeStarTopology}
l_j^m(i) = \frac{\theta}{m}+(i-1) \cdot \frac{1-\theta/m}{m}.
\end{align}

\noindent
In other words, given that no disconnections have occurred after $k$ edge failures, the load surge function behaves deterministically until step $k$ (at every surviving edge). In this case, the problem reduces to a first-passage time problem, i.e.~the event $\{A_{n,\mathbf{d}} \geq k \}$ corresponds to the event that the smallest $k$ uniformly distributed order statistics are below the linear load surge function. The following result follows by applying Theorem~1 of~\cite{Sloothaak2016}.
\begin{proposition}
	Define $A^\star_{n+1}$ as the number of edge failures in a star network with $n+1$ nodes and $m=n$ edges, and load surge function given by~\eqref{eq:LoadSurgeStarTopology}. For every $k:=k_m$ satisfying $k \rightarrow \infty$ and $m-k \rightarrow \infty$ as $m  \rightarrow \infty$,
	\begin{align*}
	\Prob\left( A^\star_{n+1} \geq k\right) \sim \frac{2\theta}{\sqrt{2\pi}} \sqrt{\frac{m-k}{m}} k^{-1/2}.
	\end{align*}
	\label{prop:StarTopology}
\end{proposition}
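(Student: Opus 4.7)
The plan is to recognise that in the star topology the cascade reduces to a purely deterministic-boundary first-passage problem for uniform order statistics, and then to quote Theorem~1 of~\cite{Sloothaak2016} (or rather, verify that our load surge function fits its hypotheses).

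First, I would set up the order-statistics reformulation. Let $U_1,\ldots,U_m$ denote the i.i.d.\ Uniform$[0,1]$ surplus capacities and $U_{(1)}^m\leq\cdots\leq U_{(m)}^m$ their order statistics. Because every surviving edge always belongs to the unique component, the load surge is deterministic and equal to $l^m(i)$ from~\eqref{eq:LoadSurgeStarTopology} at every surviving edge. The cascade propagates by consuming edges in order of increasing surplus capacity: the $i$-th failure occurs iff the previous $i-1$ failures have already occurred and $U_{(i)}^m\leq l^m(i)$. Consequently
\[
\{A^\star_{n+1}\geq k\}\;=\;\bigcap_{i=1}^{k}\bigl\{U_{(i)}^m\leq l^m(i)\bigr\}.
\]

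Second, I would recast this as a random-walk/bridge first-passage problem. Using the R\'enyi representation $U_{(i)}^m\stackrel{d}{=}S_i/S_{m+1}$ with $S_i=E_1+\cdots+E_i$ and $E_j$ i.i.d.\ $\mathrm{Exp}(1)$, the event becomes $S_i\leq l^m(i)\,S_{m+1}$ for $i=1,\ldots,k$, i.e.\ a random walk bridge staying below an affine boundary with slope close to $1$ and intercept of order $\theta$. This is precisely the set-up analysed in~\cite{Sloothaak2016}. Substituting the explicit form $l^m(i)=(\theta+i-1)/m-(i-1)\theta/m^2$ shows the boundary is linear up to a multiplicative $(1+O(1/m))$ perturbation.

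Third, I would invoke Theorem~1 of~\cite{Sloothaak2016}, which gives for linear boundaries of this form the asymptotic
\[
\Prob\Bigl(\bigcap_{i=1}^{k}\{U_{(i)}^m\leq (\theta+i-1)/m\}\Bigr)\;\sim\;\frac{2\theta}{\sqrt{2\pi}}\sqrt{\frac{m-k}{m}}\,k^{-1/2},
\]
valid whenever $k\to\infty$ and $m-k\to\infty$. The $k^{-1/2}$ prefactor is the classical scaling for Brownian bridge crossing probabilities, and the factor $\sqrt{(m-k)/m}$ encodes the finite-population correction ensuring the probability vanishes as $k\to m$.

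The only real obstacle is handling the $O(1/m^2)$ perturbation of $l^m(i)$ away from the canonical linear boundary used in~\cite{Sloothaak2016}. I would address this by sandwiching: for any $\varepsilon>0$ and all $m$ large, $(\theta-\varepsilon+i-1)/m\leq l^m(i)\leq (\theta+\varepsilon+i-1)/m$ uniformly in $i\leq k$, so the probability is squeezed between the two canonical estimates, which both converge to the stated limit after letting $\varepsilon\downarrow 0$. This yields the claim.
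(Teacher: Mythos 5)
Your overall strategy — translate the cascade on the star into a first-passage condition for the uniform order statistics, then invoke Theorem~1 of~\cite{Sloothaak2016} — is exactly how the paper proves Proposition~\ref{prop:StarTopology}; the paper's proof is literally the single sentence ``The following result follows by applying Theorem~1 of~\cite{Sloothaak2016}.'' Your first two paragraphs (the order-statistics identity $\{A^\star_{n+1}\geq k\}=\bigcap_{i=1}^k\{U_{(i)}^m\leq l^m(i)\}$, and the R\'enyi/random-walk-bridge rewriting) are the standard and correct setup, and are consistent with the discussion in the paper immediately after the proposition.

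The flaw is in the final sandwiching step. You claim that for any $\varepsilon>0$ and $m$ large, $(\theta-\varepsilon+i-1)/m\leq l^m(i)\leq(\theta+\varepsilon+i-1)/m$ uniformly in $i\leq k$. The upper bound holds trivially, but the lower bound is equivalent to $(i-1)\theta/m\leq\varepsilon$, i.e.\ $i-1\leq\varepsilon m/\theta$. Since the proposition allows $k$ to be any sequence with $m-k\to\infty$ (in particular $k$ of order $m$, as the factor $\sqrt{(m-k)/m}$ in the conclusion signals), the inequality fails for a positive fraction of the indices $i\leq k$ whenever $\varepsilon<\theta\limsup k/m$, and $\varepsilon\downarrow 0$ cannot be taken. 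In that regime the cumulative perturbation $\sum_{i\leq k}(i-1)\theta/m^2=\Theta(k^2/m^2)$ is $\Theta(1)$, so the perturbation is not negligible and a naive sandwich cannot remove it. The fix is simply that no perturbation argument is needed: Theorem~1 of~\cite{Sloothaak2016} is stated for precisely the load surge function~\eqref{eq:LoadSurgeStarTopology} and derives the asymptotic directly from the quasi-binomial form of the failure-size distribution under that exact boundary (this is what the paper's remark about the ``quasi-binomial'' distribution refers to). So your third paragraph mischaracterises the hypothesis of the cited theorem, and your fourth paragraph would not repair it if the mischaracterisation were real.
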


In particular, if $1 \ll k \ll m$, it holds that 
\begin{align*}
\Prob\left( A^\star_{n+1} \geq k\right) \sim \frac{2\theta}{\sqrt{2\pi}} k^{-1/2}.
\end{align*}
A crucial argument used in the proof of Proposition~\ref{prop:StarTopology} is that as $n=m \rightarrow \infty$,
\begin{align*}
\Prob\left( A^\star_{n+1} \geq k\right) \sim \Prob\left(U^m_{(i)} \leq \frac{\theta+i-1}{m}, \hspace{.2cm} i=1,...,k\right).
\end{align*}
The asymptotic behavior of the latter expression is obtained by observing that the edge failure distribution is quasi-binomial for this particular load surge function, and exploiting the analytic expression for the probability distribution function to derive the tail behavior. Alternatively, this result can be derived by relating this problem to an equivalent setting where one is interested in the first-passage time of a random walk bridge, as is done in~\cite{SloWacZwa2017}. We use such a relation in the more involved case where disconnections occur as well. Before moving to the general case, we briefly recall the translation to the equivalent random walk problem in the much simpler setting where no (edge) disconnections occur yet.

\bigskip
\noindent
\textbf{\textit{Translation to random walk setting:}}\\
Note that
\begin{align}
\begin{split}
&\left(U_{(1)}^{m},U_{(2)}^{m},...,U_{(m)}^{m}\right)\overset{d}{=} \left(\frac{\textrm{Exp}_{1}(1)}{m},\frac{\sum_{j=1}^2 \textrm{Exp}_{j}(1)}{m},...,\frac{\sum_{j=1}^m \textrm{Exp}_{j}(1)}{m} \, \bigg| \, \sum_{j=1}^{m+1} \textrm{Exp}_{j}(1)=m \right),
\end{split}
\label{eq:RelationUniformExponentials}
\end{align}
where the exponentially distributed random variables are independent. Define the random walk 
\begin{align}
S_i = \sum_{j=1}^i \left(1-\textrm{Exp}_j(1)\right), \hspace{0.5cm} i\geq 1,
\end{align}
where $S_0=0$. Then,
\begin{align*}
\Prob\left( A^\star_{n+1} \geq k\right) &\sim \Prob\left(U^m_{(i)} \leq \frac{\theta+i-1}{m}, \hspace{.2cm} i=1,...,k\right)= \Prob\left(S_i \geq 1-\theta, \hspace{.2cm} i=1,...,k \big| S_{m+1}=1 \right).
\end{align*}
Define for every $x \in \mathbb{R}$,
\begin{align}
\tau^*_x := \min\{ i\geq 1 : S_i \leq x \}.
\end{align}
Then, the objective can be written as
\begin{align*}
\Prob\left( A^\star_{n+1} \geq k\right) \sim \Prob\left(\tau^*_{1-\theta} > k \big| S_{m+1}=1 \right) \sim \Prob\left(\tau^*_{1-\theta} > k \big| S_{m}=0 \right).
\end{align*}
Using the main result in~\cite{SloWacZwa2017}, we observe that for $k=o(m)$,
\begin{align*}
\Prob\left( A^\star_{n+1} \geq k\right) \sim \Prob\left(\tau^*_{1-\theta} > k \big| S_{m}=0 \right) &\sim \sqrt{\frac{2}{\pi}} \E\left(-S_{\tau_{1-\theta}}\right) k^{-1/2} = \frac{2\theta}{\sqrt{2\pi}}  k^{-1/2}, 
\end{align*}
where the latter equality follows from the memoryless property of exponentials.

A similar random walk construction can be done for the case where disconnections do take place. Before we consider this process, we first show the result of Theorem~\ref{thm:MainResultSublinear} in the phase where it is unlikely to have any disconnections in the connected configuration model. That is, removing $k=o(\sqrt{m})$ edges uniformly at random is unlikely to cause any disconnection by Theorem~\ref{thm:Firstdisconnection}. Due to the coupling between the cascading failure process and sequential edge-removal process, Proposition~\ref{prop:StarTopology} prescribes exactly the asymptotic behavior of the edge failure size in that case. 

\begin{theorem}
	The probability that the cascading failure process disconnects the network is given by
	\begin{align}
	\Prob (A_{n,\mathbf{d}} \geq T_{n,\mathbf{d}})\sim \frac{2\theta}{\sqrt{2\pi}} \left( \frac{2 p_2}{d-2 p_2} \right)^{1/4} \Gamma\left( \frac{3}{4} \right) m^{-1/4},
	\end{align}
	where $\Gamma(\cdot)$ denotes the gamma function. Consequently, for any threshold $1 \ll k \ll \sqrt{m}$,
	\begin{align}
	\Prob \left(A_{n,\mathbf{d}} \geq k \right) \sim \frac{2\theta}{\sqrt{2\pi}} k^{-1/2}.
	\label{eq:MainResultEdgeskVerySmall}
	\end{align}
	\label{thm:kVerySmall}
\end{theorem}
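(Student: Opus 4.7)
The plan is to build both claims on the coupling from Section~\ref{sec:RoadMapProof}: the same uniform random permutation $(e_{(1)},\ldots,e_{(m)})$ that drives the sequential edge-removal process also determines the order in which edges would fail in a hypothetical ``star-topology'' cascade $A^\star := A^\star_{n+1}$ on the same edge set, where we pretend the graph never disconnects. Up to the first disconnection time $T_{n,\mathbf{d}}$, the load surge recursion~\eqref{eq:LoadSurgeRecursion} reduces to the deterministic linear formula~\eqref{eq:LoadSurgeNoDisconnection}, so the two cascades coincide and $A_{n,\mathbf{d}}\wedge T_{n,\mathbf{d}} = A^\star \wedge T_{n,\mathbf{d}}$. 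In particular, $\{A_{n,\mathbf{d}}\geq T_{n,\mathbf{d}}\} = \{A^\star \geq T_{n,\mathbf{d}}\}$. The second key observation is that, under the coupling, $A^\star$ is a function of the ordered surplus capacities only, whereas $T_{n,\mathbf{d}}$ is a function of the permutation only; hence $A^\star$ and $T_{n,\mathbf{d}}$ are independent. Writing $g_m(t):=\Prob(A^\star\geq t)$, I therefore obtain
\begin{equation*}
\Prob(A_{n,\mathbf{d}}\geq T_{n,\mathbf{d}}) = \E\bigl[g_m(T_{n,\mathbf{d}})\bigr].
\end{equation*}

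For the first statement, I would combine Proposition~\ref{prop:StarTopology}, which gives $g_m(t)\sim \frac{2\theta}{\sqrt{2\pi}}\sqrt{(m-t)/m}\,t^{-1/2}$, with Theorem~\ref{thm:Firstdisconnection}, which gives $T_{n,\mathbf{d}}/\sqrt{m}\dto T(D)$. Formally I would set $t=\sqrt{m}\,T(D)$ and argue that $\sqrt{m}\,g_m(\sqrt{m}\,x)\to \frac{2\theta}{\sqrt{2\pi}} x^{-1/2}$ uniformly on compacts of $(0,\infty)$, and that the sequence $\sqrt{m}\,g_m(T_{n,\mathbf{d}})$ is uniformly integrable, which yields
\begin{equation*}
m^{1/4}\Prob(A_{n,\mathbf{d}}\geq T_{n,\mathbf{d}}) \to \frac{2\theta}{\sqrt{2\pi}}\,\E\bigl[T(D)^{-1/2}\bigr].
\end{equation*}
A direct computation with the Rayleigh density $f_{T(D)}(x)=2\alpha x e^{-\alpha x^2}$ where $\alpha=2p_2/(d-2p_2)$, using the substitution $u=\alpha x^2$, gives $\E[T(D)^{-1/2}]=\alpha^{1/4}\Gamma(3/4)$, producing the claimed constant.

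The second statement follows from the decomposition
\begin{equation*}
\Prob(A_{n,\mathbf{d}}\geq k) = \Prob(A^\star\geq k,\,T_{n,\mathbf{d}}>k) + \Prob(A_{n,\mathbf{d}}\geq k,\,T_{n,\mathbf{d}}\leq k).
\end{equation*}
For $k\ll\sqrt{m}$, independence combined with Theorem~\ref{thm:Firstdisconnection} gives $\Prob(T_{n,\mathbf{d}}>k)\to 1$, so the first term is asymptotically $\Prob(A^\star\geq k)\sim \frac{2\theta}{\sqrt{2\pi}} k^{-1/2}$ by Proposition~\ref{prop:StarTopology}. For the second term I use $\{A_{n,\mathbf{d}}\geq k\}\cap\{T_{n,\mathbf{d}}\leq k\}\subseteq\{A^\star\geq T_{n,\mathbf{d}}\}\cap\{T_{n,\mathbf{d}}\leq k\}$, so that by independence it is bounded by $\E[g_m(T_{n,\mathbf{d}})\indi_{T_{n,\mathbf{d}}\leq k}]\leq C\,\E[T_{n,\mathbf{d}}^{-1/2}\indi_{T_{n,\mathbf{d}}\leq k}]$. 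Using the small-$x$ behaviour of the Rayleigh cdf and Proposition~\ref{propo:DisconnectivityEarly} to bound $\Prob(T_{n,\mathbf{d}}\leq t)=O(t^2/m)$ uniformly for $t\leq k$, a straightforward summation (or integration after change of variables $t=\sqrt{m}x$) shows this is of order $O(k^{3/2}/m)$, which is $o(k^{-1/2})$ precisely in the regime $k\ll\sqrt{m}$.

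The main obstacle I anticipate is the uniform integrability needed to pass from $g_m(T_{n,\mathbf{d}})$ to its pointwise limit: one must simultaneously control very small values of $T_{n,\mathbf{d}}$ (where $g_m$ can be close to $1$ but $\Prob(T_{n,\mathbf{d}}\leq\epsilon\sqrt{m})=O(\epsilon^2)$ by Proposition~\ref{propo:DisconnectivityEarly}) and very large values (where the $(m-t)/m$ correction in Proposition~\ref{prop:StarTopology} becomes relevant but $\Prob(T_{n,\mathbf{d}}\geq L\sqrt{m})$ decays exponentially in $L^2$). A clean way is to truncate $T_{n,\mathbf{d}}\in[\epsilon\sqrt{m},L\sqrt{m}]$, apply the uniform asymptotic of $g_m$ on that window, and then let $\epsilon\downarrow 0$, $L\uparrow\infty$ after $m\to\infty$, showing that each boundary contribution is bounded by a small multiple of $m^{-1/4}$.
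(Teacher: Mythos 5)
Your proposal is correct and follows essentially the same route as the paper: couple the cascade to the star-cascade $A^\star_{m+1}$ via the shared uniform permutation, use the independence of $A^\star_{m+1}$ and $T_{n,\mathbf d}$ to write the probability as an expectation of the star-tail at $T_{n,\mathbf d}$, and combine Proposition~\ref{prop:StarTopology} with the Rayleigh limit of Theorem~\ref{thm:Firstdisconnection}. There is one worthwhile difference. For the second claim the paper decomposes on the event $\{A_{n,\mathbf d}<T_{n,\mathbf d}\}$ and writes $\Prob(A_{n,\mathbf d}\geq k\mid A_{n,\mathbf d}<T_{n,\mathbf d})=\Prob(A^\star_{m+1}\geq k)$, which as stated is not an identity (both events depend on $A^\star_{m+1}$, so conditioning on $A^\star_{m+1}<T_{n,\mathbf d}$ does bias $A^\star_{m+1}$); it holds only asymptotically, with the error absorbed into the $o(k^{-1/2})$ remainder. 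Your decomposition on $\{T_{n,\mathbf d}>k\}$ versus $\{T_{n,\mathbf d}\leq k\}$ is cleaner, since $\{A_{n,\mathbf d}\geq k,\,T_{n,\mathbf d}>k\}=\{A^\star_{m+1}\geq k,\,T_{n,\mathbf d}>k\}$ factors exactly by independence, no approximate conditional argument needed, and your bound $O(k^{3/2}/m)$ on the complementary term is both correct and sharper than simply invoking $\Prob(A_{n,\mathbf d}\geq T_{n,\mathbf d})=O(m^{-1/4})$ (which is what the paper does, and which also suffices for $k\ll\sqrt m$). Your explicit truncation argument for uniform integrability is the right way to justify the passage to the limiting Rayleigh expectation; the paper delegates this to the uniform convergence result cited from the earlier companion paper, but the content is the same.
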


\noindent
\begin{proof}
	Note that
	\begin{align*}
	\Prob(A_{n, \mathbf{d}}\geq T_{n,\mathbf{d}})  &= \int_{0}^\infty  \Prob\left(\sqrt{m} T_{n,\mathbf{d}}  \in dt \right) \Prob(A^\star _{m+1}\geq t \sqrt{m}) \,dt \sim \frac{2\theta}{\sqrt{2\pi}} m^{-1/4} \E\left[ (\sqrt{m} T_{n,\mathbf{d} })^{-1/2} \right] \\
	&\sim \frac{2\theta}{\sqrt{2\pi}} m^{-1/4} \int_{0}^\infty \frac{ 4p_2}{d-2 p_2} t^{1/2} e^{-\frac{2t^2 p_2}{d-2p_2}} \,dt = \frac{2\theta}{\sqrt{2\pi}} \left( \frac{2p_2}{d-2 p_2} \right)^{1/4} \Gamma\left( \frac{3}{4} \right) m^{-1/4},
	\end{align*}
	where the second assertion follows due to the uniform convergence result of $A^\star _{m+1}$, see Theorem~1 in~\cite{SloWacZwa2017Split}, and the third assertion follows from Theorem~\ref{thm:Firstdisconnection}. For the second claim of the theorem, note that
	\begin{align*}
	\Prob \left(A_{n,\mathbf{d}} \geq k \right) &= \Prob \left(A_{n,\mathbf{d}} \geq k , A_{n,\mathbf{d}} < T_{n,\mathbf d}\right) + \Prob \left(A_{n,\mathbf{d}} \geq k , A_{n,\mathbf{d}} \geq T_{n,\mathbf d} \right),
	\end{align*}
	where due to Proposition~\ref{prop:StarTopology},
	\begin{align*}
	\Prob \left(A_{n,\mathbf{d}} \geq k , A_{n,\mathbf{d}} < T_{n,\mathbf d}\right) &= \Prob \left(A_{n,\mathbf{d}} \geq k \big| A_{n,\mathbf{d}} < T_{n,\mathbf d}\right) \Prob \left(A_{n,\mathbf{d}} < T_{n,\mathbf d}\right)\\
	&= \Prob \left(A^\star_{m+1} \geq k \right) \underbrace{\Prob \left(A_{n,\mathbf{d}} < T_{n,\mathbf d}\right)}_{=1-o(1)} \sim \frac{2\theta}{\sqrt{2\pi}} k^{-1/2},
	\end{align*}
	and
	\begin{align*}
	\Prob \left(A_{n,\mathbf{d}} \geq k , A_{n,\mathbf{d}} \geq T_{n,\mathbf d}\right) \leq  \Prob \left(A_{n,\mathbf{d}} \geq T_{n,\mathbf d}\right) = O(m^{-1/4}) = o(k^{-1/2}).
	\end{align*}
\end{proof}

\subsection{Random walk formulation}\label{sec:RWFormulation}
This section is devoted to introducing a related random walk, and to showing that the number of edge failures in the giant asymptotically behaves the same as the first-passage time of a random walk bridge.

Recall that $|\hat{E}_m(i)|$ denotes the number of edges in the giant when $i$ edges have been removed uniformly at random, and let $e_{i}$ correspond to the edge corresponding to the $i$'th order statistic of the surplus capacities. Define the sequence of processes $\{L_{i,m}: 1 \leq i \leq m+1 , m \geq 1\}$, where $L_{1,m}=1$, and for $2 \leq i \leq m+1$,
\begin{align}\label{eq:SurgeFakeNoTheta}
L_{i,m} = \left\{\begin{array}{ll}
\frac{m+1-\sum_{j=1}^{i-1} L_{j,m}}{|\hat{E}_m(i-2)|} & \textrm{if } e_{i-1} \in \mathcal{C}_{\max}, \\
0 & \textrm{if } e_{i-1} \not\in \mathcal{C}_{\max} .
\end{array}\right.
\end{align}
Note that this corresponds to the load surge increments in the giant if $\theta=1$, rescaled by a factor $m+1$. We consider a sequence of random walks $(S_{i,m})_{m\geq 1, 1 \leq i \leq m+1}$ defined as
\begin{align}
S_{i,m} = \sum_{j=1}^i X_{j,m}
\end{align}
with increments
\begin{align}\label{eq:JumpsFake}
X_{i,m} = L_{i,m}-\textrm{Exp}_{i,m}(1),
\end{align}
where $\textrm{Exp}_{i,m}(1)$ are independent exponential random variables with unit rate. We note that $e_m \in \Cmax$ and $|\hat{E}_m(m-1)|=1$ by definition, since removing $m-1$ edges leaves only isolated nodes and one component with two nodes connected by a single edge which inherently is the component that contains the largest number of edges. Therefore,
\begin{align*}
\sum_{j=1}^{m+1} L_{j,m} = \sum_{j=1}^{m} L_{j,m} + \frac{m+1-\sum_{j=1}^{m} L_{j,m}}{1} = m+1,
\end{align*}
and hence the random walk satisfies the property
\begin{align}
S_{m+1,m} = \sum_{j=1}^{m+1} X_{j,m} = m+1 - \sum_{j=1}^{m+1} \textrm{Exp}_{j,m}(1).
\label{eq:EndRWIdentity}
\end{align}
Finally, for all $m \geq 1$ define the stopping times
\begin{align}
\tau_m = \min\{ 1 \leq i \leq m : S_{i,m} < 1-\theta \},
\label{eq:TauStoppingTimeDefin}
\end{align}
and $\tau=m+1$ whenever $ S_{i,m} \geq 1-\theta$ for all $i=1,...,m$. 

In case of the star topology, i.e.~no edge disconnections occur, it holds that $|\hat{E}_m(i)| = m-i$, causing $L_{i,m}=1$ for all $1 \leq i \leq m+1$. In that case we observed that the failure size tail could be written as the first-passage tail of the random walk bridge. The above random walk formulation is a generalization that accounts for edges that may possibly no longer be contained in the giant as edge failures occur. 

\begin{proposition}
	Suppose $m^\delta \ll k \ll m^{1-\delta}$ for some $\delta \in (0,1/2)$. If
	\begin{align}
	\Prob\left(\tau_m \geq k \bigg| \, S_{m+1,m} =0 \right) \sim \frac{2\theta}{\sqrt{2\pi}} k^{-1/2},
	\label{eq:HypothesisRWStatement}
	\end{align}
	then
	\begin{align*}
	\Prob\left( \hat{A}_{n,\mathbf{d}} \geq \kappa(k) \right) \sim \Prob\left(\tau_m \geq k \bigg| \, S_{m+1,m} =0 \right).
	\end{align*}\label{prop:RelationToRW}
\end{proposition}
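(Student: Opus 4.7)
The strategy is to recast $\{\hat A_{n,\mathbf d}\geq\kappa(k)\}$ as a first-passage event for the bridge $(S_{i,m}\mid S_{m+1,m}=0)$ and then invoke~\eqref{eq:HypothesisRWStatement}. First, condition on the realization of $\overline{CM}_n(\mathbf d)$ and on the sequential removal ordering $(e_{(j)})_{j\leq m}$ supplied by Lemma~\ref{lem:CascadePercolation}; this makes $L_{i,m}$ deterministic. By the coupling of Section~\ref{sec:RoadMapProof} the cascade event is exactly $\{U^m_{(j)}<l^m_\theta(\kappa(j))$ for every $j\leq k$ with $e_{(j)}\in\Cmax\}$, where $l^m_\theta$ denotes the (common) giant-component load surge function at parameter $\theta$. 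Using the standard representation $(U^m_{(j)})_{j\leq m}\stackrel{d}{=}\bigl((\sum_{i=1}^{j}E_i)/\sum_{i=1}^{m+1}E_i\bigr)_{j\leq m}$ with i.i.d.\ $E_i\sim\mathrm{Exp}(1)$, and noting via~\eqref{eq:EndRWIdentity} that $\{\sum_{i=1}^{m+1}E_i=m+1\}=\{S_{m+1,m}=0\}$, the failure condition becomes $\sum_{i=1}^{j}E_i<(m+1)\,l^m_\theta(\kappa(j))$ at every such $j$.

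The crucial identification is that, uniformly in $j\leq k$ and on a high-probability event, $(m+1)\,l^m_\theta(\kappa(j))=\sum_{i=1}^{j}L_{i,m}+\theta-1+o(1)$ whenever $e_{(j)}\in\Cmax$. Subtracting recursion~\eqref{eq:LoadSurgeRecursion} at parameter $\theta=1$ from that at general $\theta$ gives
\begin{equation*}
l^m_\theta(r)-l^m_1(r)=\frac{\theta-1}{m}\prod_{s=1}^{r-1}\Bigl(1-\frac{1}{|\hat E_m(s-1)|}\Bigr)=\frac{\theta-1}{m}(1+o(1))
\end{equation*}
uniformly in $r\leq k$, where the second equality uses Theorem~\ref{thm:DeviationsOutsideGiant} to ensure $|\hat E_m(s-1)|=m(1+o_{\Prob}(1))$ throughout the relevant range. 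A direct induction comparing~\eqref{eq:SurgeFakeNoTheta} with the $\theta=1$ instance of~\eqref{eq:LoadSurgeRecursion} then yields $\sum_{i=1}^{j}L_{i,m}=(m+1)\,l^m_1(\kappa(j))(1+o(1))$ at every giant index $j$, and combining the two displays gives the claim.

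Sandwich. The giant-index condition is therefore equivalent to $S_{j,m}>1-\theta-o(1)$, and since $\{\tau_m\geq k\}$ imposes $S_{i,m}\geq 1-\theta$ at \emph{every} $i\leq k-1$ (including non-giant indices, at which $L_{i,m}=0$), we obtain the inclusion $\{\tau_m\geq k\}\subseteq\{\hat A_{n,\mathbf d}\geq\kappa(k)\}$ and hence, via~\eqref{eq:HypothesisRWStatement}, the lower bound $\Prob(\hat A_{n,\mathbf d}\geq\kappa(k))\geq(1-o(1))\tfrac{2\theta}{\sqrt{2\pi}}k^{-1/2}$. For the matching upper bound I would show that on a further event of probability $1-o(k^{-1/2})$ the cascade event implies $\min_{i\leq k}S_{i,m}\geq 1-\theta-\eta_m$ for some deterministic $\eta_m\to 0$. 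This reduces to controlling the cumulative non-giant decrement $\sum_{i\leq k:\,L_{i,m}=0}E_i$, which I bound by combining the polynomial-tail estimate~\eqref{eq:SecondClaimCorollaryEdgesOutsideGiant} on the number $k-\kappa(k)$ of non-giant removals with standard concentration for sums of i.i.d.\ exponentials. A second application of~\eqref{eq:HypothesisRWStatement} with $\theta$ perturbed to $\theta+\eta_m$ then gives the matching upper bound.

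Main obstacle. The delicate step is the upper bound near $k\sim m^{1-\delta}$, where $k-\kappa(k)$ is polynomially large and mean-value estimates alone do not bring the cumulative non-giant decrement below the walk's natural fluctuation scale $\sqrt{k}$. The argument must exploit the polynomial-tail deviations bounds~\eqref{eq:FirstClaimCorollaryEdgesOutsideGiant}--\eqref{eq:SecondClaimCorollaryEdgesOutsideGiant} \emph{uniformly} over $i\leq k$, together with the observation that giant indices predominate and restore the walk by an $O(1)$ amount after each non-giant dip, so that the minimum over $i\leq k$ differs from the minimum over giant $i$'s by $o_{\Prob}(1)$.
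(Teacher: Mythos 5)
Your lower-bound reasoning is essentially the paper's: the cascade event imposes constraints only at giant indices, $\{\tau_m\geq k\}$ imposes the matching (up to an $O(1/m)$ shift of the barrier) constraint at \emph{every} index, hence $\{\tau_m\geq k\}\subseteq\{\hat A_{n,\mathbf d}\geq\kappa(k)\}$. The paper formalizes the same inclusion by introducing the auxiliary stopping time $T^*=\min\{i:U^m_{(i)}>l^*(i)\}$ with $l^*(i)=\hat l(\kappa(i-1)+1)$, which agrees with the giant load surge at giant indices and is constant across non-giant ones, and observing $T^*\leq\hat T$.

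The gap is in the upper bound, and it is a real one. Your plan is to show that on a high-probability event $\{\hat A_{n,\mathbf d}\geq\kappa(k)\}$ forces $\min_{i\leq k}S_{i,m}\geq 1-\theta-\eta_m$ with $\eta_m\to 0$, controlled via the cumulative non-giant decrement $\sum_{i\leq k:L_{i,m}=0}E_i$, and a closing assertion that ``the minimum over $i\leq k$ differs from the minimum over giant $i$'s by $o_{\Prob}(1)$.'' This last assertion is false. Between consecutive giant indices $j<j'$ the walk decreases by $\sum_{l=j+1}^{j'-1}E_l$; a single such gap already costs $\Theta_{\Prob}(1)$, and the maximum over the $\Theta_{\Prob}(k^3/m^2)$ non-giant indices up to time $k$ is of order $\log m$, not $o_{\Prob}(1)$. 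The total decrement you propose to bound is also not the relevant quantity, since the walk is restored at each giant index. So the implication you want fails even on a high-probability event, and no amount of perturbing $\theta$ by a deterministic $\eta_m\to 0$ can absorb an $\Omega_{\Prob}(1)$ dip.

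The paper sidesteps this entirely by exploiting the structural identity $\hat T=\min\{j\geq T^*:e_j\in\Cmax\}$ (your $\hat T$ is the first \emph{giant} index at or after the first barrier crossing $T^*$). It then writes $\Prob(\hat T>k)=\Prob(T^*>k)+\Prob(T^*\leq k;\hat T>k)$ and splits the error term: the event $\{T^*\in[k-m^\alpha,k];\hat T>k\}$ has probability at most $\Prob(T^*>k-m^\alpha)-\Prob(T^*>k)=o(k^{-1/2})$ by the hypothesis, while $\{T^*<k-m^\alpha;\hat T>k\}$ requires a run of $m^\alpha$ consecutive non-giant removals, whose probability is super-exponentially small by Theorem~\ref{thm:DeviationEdgesOutsideGiant}. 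Neither of these two steps relies on the walk staying close to the barrier between giant indices. If you want to keep your sandwich structure, you must replace the ``min over all $i$ is close to min over giant $i$'' claim with this gap-to-next-giant-index argument; as written, the upper bound does not close.
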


\begin{proof}[Proof of Proposition~\ref{prop:RelationToRW}]
	Write $\hat{l}(\cdot)$ for the load surge function corresponding to the edges in the giant. Note that by construction,
	\begin{align*}
	\Prob\left( \hat{A}_{n,\mathbf{d}} \geq \kappa(k) \right)& =
	\Prob\left(U_{(i)}^m \mathbbm{1}_{\{e_{i} \in \mathcal{C}_{\max} \} }\leq \hat{l}(\kappa(i)), \;\;\; i=1,...,k \right).
	\end{align*}
	In other words, whenever an edge is contained in the giant, one checks whether this edge has sufficient capacity to deal with the load surge function. Instead of looking only at those instances, we would like to compare all order statistics to an appropriately chosen function.  For this purpose, we introduce the function $l^*(\cdot)$ defined as
	\begin{align*}
	l^*(i) = \hat{l}(\kappa(i-1)+1), \hspace{1cm} i=1,...,m.
	\end{align*}
	We note this function satisfies two important properties:
	\begin{itemize}
		\item[(p1)] $l^*(i)=\hat{l}(\kappa(i))$ if $e_i \in \mathcal{C}_{\max}$;
		\item[(p2)] $l^*(i)=l^*(i-1)$ if $e_{i-1} \not\in \mathcal{C}_{\max}$.
	\end{itemize}
	Moreover, as $\hat{l}(\cdot)$ is non-decreasing for all $i\geq 2$, this holds as well for $l^*(i)$. We define the two stopping times,
	\begin{align*}
	\hat{T} = \min\{ 1\leq i \leq m: U_{(i)}^m \mathbbm{1}_{\{e_{i} \in \mathcal{C}_{\max} \} }> \hat{l}(\kappa(i)) \}
	\end{align*}
	and
	\begin{align*}
	T^* = \min\{ 1\leq i \leq m: U_{(i)}^m  > l^*(i) \}.
	\end{align*}
	We observe that the first property~(p1) implies that $T^* \leq \hat{T}$, and $T^* = \hat{T}$ if $e_{T^*} \in  \mathcal{C}_{\max}$. Together with the second property~(p2) and the observation that $U_{(\hat{T})}^m \geq U_{(T^*)}^m$, this implies
	\begin{align}
	\hat{T} = \min\{ j \geq T^* : e_j \in \mathcal{C}_{\max} \}.
	\label{eq:RelationTStarAndTHat}
	\end{align}
	Therefore,
	\begin{align}
	\Prob\left( \hat{A}_{n,\mathbf{d}} \geq \kappa(k) \right) = \Prob\left(\hat{T} > k \right) = \Prob\left({T}^{*}> k \right) + \Prob\left({T}^{*} \leq k ; \hat{T} > k \right). 
	\label{eq:SplitTStarAndTHat}
	\end{align}
	
	To conclude the proof, we relate the random walk to the stopping time $T^\star$, and show that the second contribution in~\eqref{eq:SplitTStarAndTHat} is negligible. For the first claim, we consider the perturbed increments $\{L_{i,m}(\theta); 1\leq i \leq m+1, m\geq 1 \}$ with $L_{1,m}(\theta)=\theta$ and
	\begin{align}\label{eq:SurgeFake}
	L_{i,m}(\theta) = \left\{\begin{array}{ll}
	\frac{m+1-\sum_{j=1}^{i-1} L_{j,m}(\theta)}{|\hat{E}_m(i-2)|} & \textrm{if } e_{i-1} \in \mathcal{C}_{\max}, \\
	0 & \textrm{if } e_{i-1} \not\in \mathcal{C}_{\max} .
	\end{array}\right.
	\end{align}
	Note that this corresponds to the load surge increments rescaled by a factor $m+1$. In particular, we observe $L_{\cdot,\cdot} = L_{\cdot,\cdot}(1) $, and
	\begin{align*}
	(m+1)  l^*(i) = \theta \left(\frac{m+1}{m}-1\right) + \sum_{j=1}^i L_{j,m}(\theta) = \frac{\theta}{m} + \sum_{j=1}^i L_{j,m}(\theta) .
	\end{align*}
	Note that $\theta/m=O(1/m)$, and
	\begin{align*}
	\max_{i=1,...,k}\left\{ \left|\sum_{j=1}^i \left( L_{j,m}(\theta) - L_{j,m} \right) - (\theta-1)\right| \right\} \leq \max_{i=1,...,k}   \frac{|1-\theta|}{|\hat{E}_m(i)|},
	\end{align*}
	which is of order $O(1/m)$ with probability $1- o(m^{-2})$ by Theorem~\ref{thm:DeviationsOutsideGiant}. Since
	\begin{align*}
	\Prob\left({T}^{*}> k \right) &= \Prob\left((m+1) U_{(i)}^m \leq (m+1)l^*(i), \;\;\; i=1,...,k \right) \\
	&= \Prob\left(\sum_{j=1}^i \textrm{Exp}_j(1) \leq \frac{\theta}{m} + \sum_{j=1}^i L_{j,m}(\theta), \;\;\; i=1,...,k \, \big\vert\, \sum_{j=1}^{m+1} \textrm{Exp}_j(1) = m+1 \right)\\
	&= \Prob\left(\sum_{j=1}^i X_{j,m} \geq - \frac{\theta}{m} + \sum_{j=1}^i (L_{j,m}-L_{j,m}(\theta)), \;\;\; i\leq,k \, \big\vert\, \sum_{j=1}^{m+1} \textrm{Exp}_j(1) = m+1 \right),
	\end{align*}
	it follows that
	\begin{align*}
	\Prob&\left({T}^{*}> k \right) = \Prob\left(\sum_{j=1}^i X_{j,m} \geq 1-\theta+o(1), \;\;\; i=1,...,k \, \big\vert\, \sum_{j=1}^{m+1} \textrm{Exp}_j(1) = m+1 \right) + o(m^{-2}).
	\end{align*}
	Due to our hypothesis~\eqref{eq:HypothesisRWStatement}, it follows that as $m\rightarrow \infty$,
	\begin{align*}
	\Prob\left({T}^{*}> k \right)  &\sim \Prob\left(\sum_{j=1}^i X_{j,m} \geq 1-\theta, \;\;\; i=1,...,k \, \big\vert\, \sum_{j=1}^{m+1} \textrm{Exp}_j(1) = m+1 \right) = \Prob\left(\tau_m \geq k \bigg| \, S_{m+1,m} =0 \right).
	\end{align*}
	
	To conclude the result, it remains to be shown that the second term in~\eqref{eq:SplitTStarAndTHat} is of order $o(k^{-1/2})$. Since we assumed that $m^\delta \ll k \ll m^{1-\delta}$ for some $\delta \in (0,1/2)$, we observe that there exists an $\alpha \in (0,1)$ such that both $k^2/m \ll m^\alpha \ll k$. For all such $\alpha \in (0,1)$, it holds that 
	\begin{align*}
	\Prob\left({T}^{*} \leq k ; \hat{T} > k \right) = \Prob\left({T}^{*} \in [k-m^\alpha, k] ; \hat{T} > k \right) + \Prob\left({T}^{*} < k-m^\alpha ; \hat{T} > k \right).
	\end{align*}
	We note that by our hypothesis and our previous result,
	\begin{align*}
	\Prob\left({T}^{*} \in [k-m^\alpha, k] ; \hat{T} > k \right) &\leq \Prob\left({T}^{*} > k-m^\alpha \right) - \Prob\left({T}^{*} > k ; \hat{T} > k \right) \\
	&\sim \frac{2\theta}{\sqrt{2\pi}} (k-m^\alpha)^{-1/2} - \frac{2\theta}{\sqrt{2\pi}} k^{-1/2} = o(k^{-1/2}).
	\end{align*}
	Finally, we observe that by~\eqref{eq:RelationTStarAndTHat},
	\begin{align*}
	\Prob\left({T}^{*} < k-m^\alpha ; \hat{T} > k \right) &\leq \sum_{j=1}^{k-m^\alpha} \Prob\left({T}^{*} = j ; \hat{T} - T^* > m^\alpha \right) \\
	&\leq \sum_{j=1}^{k-m^\alpha} \sum_{r=0}^m \left(\frac{m-j+1-r}{m-k}\right)^{m^\alpha} \Prob\left( |\hat{E}_m(j-1)| =r \right) \\
	&\leq o(m^{-1/2}) + \sum_{j=1}^{k-m^\alpha} \left(\frac{j^\alpha}{m-k}\right)^{m^\alpha} = o(m^{-1/2}) + O\left(k \left(\frac{k^\alpha}{m-k} \right)^{m^\alpha} \right) =o(m^{-1/2}),
	\end{align*}
	where the third assertion follows from Theorem~\ref{thm:DeviationEdgesOutsideGiant}.
\end{proof}

To derive the asymptotic probability of $\{\hat{A}_{n,\mathbf{d}} \geq \kappa(k)\}$ to occur for $k\ll m^{1-\delta}$ for some $\delta \in (0,1)$, Proposition~\ref{prop:RelationToRW} implies that it suffices to show that the asymptotic behavior of the first-passage time of the defined random walk is given by~\eqref{eq:HypothesisRWStatement}.

\subsection{Behavior of the number of edge failures in the giant}\label{sec:AsympBehvGiant}
We start the analysis by showing that if $k= o(m^{\alpha})$ for some $\alpha \in (0,1)$, then Proposition~\ref{prop:SublinearK} holds for the number of failures in the giant. We recap this proposition next.

\SublinearK*

For $k= o(\sqrt{m})$, this result is already proven in Theorem~\ref{thm:kVerySmall}. For the remainder of the proofs in this section, we therefore assume $k= \Omega(\sqrt{m})$. 

To prove Proposition~\ref{prop:SublinearK}, we will extensively use the random walk
\begin{align*}
S_i = \sum_{j=1}^i \left(1-\textrm{Exp}_j(1)\right),\hspace{0.5cm} i\geq 1,
\end{align*}
where $S_0=0$. This is related to $\tau_m$ as defined in~\eqref{eq:TauStoppingTimeDefin} through the relation
\begin{align}
\tau_m = \min\{1 \leq i \leq m : S_i < 1-\theta + \sum_{j=1}^i \left(1-L_{j,m} \right) \},
\label{eq:RelationStoppingTimeRegularRW}
\end{align}
and $\tau_m = m+1$ if $S_i \geq 1-\theta + \sum_{j=1}^i \left(1-L_{j,m}\right)$ for all $1\leq i \leq m$. Moreover, for a sequence $g=\{g_i\}_{i \in \mathbb{N}}$, let~$T_g$ correspond to the first-passage time of the random walk $S_i$ over this sequence, i.e.,
\begin{align*}
T_g = \min\{ i \in \mathbb{N} : S_i < g_i\}.
\end{align*}

We use the following strategy to prove Proposition~\ref{prop:SublinearK}. First, we show that for a particular class of (deterministic) boundary sequences, it holds that
\begin{align*}
\Prob( T_g > k) \sim \frac{2\theta}{\sqrt{2\pi}} k^{-1/2}
\end{align*}
as $k \rightarrow \infty$. Next, we show that the boundary as given in~\eqref{eq:RelationStoppingTimeRegularRW} falls in this class of boundary sequences with sufficiently high probability, and hence 
\begin{align*}
\Prob( \tau_m > k) \sim \frac{2\theta}{\sqrt{2\pi}} k^{-1/2}
\end{align*}
as $k \rightarrow \infty$. Finally, we show that conditioning on the event that the random walk returns to zero at time $m+1$ does not affect the tail behavior, i.e. for all $k=o(m^\alpha)$ for some $\alpha \in (0,1)$, it holds that as $m \rightarrow \infty$,
\begin{align*}
\Prob\left( \hat{A}_{n,\mathbf{d}} \geq \kappa(k) \right) \sim \Prob\left( \tau_m > k\big\vert S_{m+1}=0 \right) \sim \Prob( \tau_m > k) \sim \frac{2\theta}{\sqrt{2\pi}} k^{-1/2}.
\end{align*}

\subsubsection{First-passage time for moving boundaries initially constant for sufficient time}
Before moving to the proof of Proposition~\ref{prop:SublinearK}, we consider the first-passage time behavior of $(S_i)_{i \in \mathbb{N}}$ for a particular class of moving boundaries. The next lemma shows that the first-passage time over a boundary that is monotone non-decreasing, grows slower than $\sqrt i$, and is initially constant for a sufficiently large time, behaves the same as the first-passage time over the constant boundary. 

\begin{lemma}
	Suppose $l:= l_k$ is such that $k^\alpha \ll l \ll k$ as $k \rightarrow \infty$ for some $\alpha \in (0,1)$. Define the boundary sequence
	\begin{align*}
	g_{i,l}^+ = \left\{ \begin{array}{ll}
	1-\theta & \textrm{if } i\leq l,\\
	i^\gamma & \textrm{if } i >l,
	\end{array}\right.
	\end{align*}
	with $\gamma \in (0,1/2)$. Then, as $k \rightarrow \infty$,
	\begin{align*}
	\Prob\left(T_{g^+} > k  \right) \sim \Prob\left(T_{1-\theta} > k  \right) .
	\end{align*}
	\label{lem:UBSimilarStoppingTimeBehavior}
\end{lemma}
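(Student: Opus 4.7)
The plan is to show that the moving boundary $g_{i,l}^+$ yields the same first-passage asymptotics as the constant boundary $1-\theta$, by controlling the difference $\Prob(T_{g^+} \leq k, T_{1-\theta} > k)$.

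The easy direction is immediate: since $l \to \infty$ and $\gamma > 0$, we have $i^\gamma \geq 1-\theta$ for all $i \geq l$ large enough, so $g_{i,l}^+ \geq 1-\theta$ everywhere. Consequently $\{S_i < 1-\theta\} \subseteq \{S_i < g_{i,l}^+\}$, whence $T_{g^+} \leq T_{1-\theta}$, and $\Prob(T_{g^+} > k) \leq \Prob(T_{1-\theta} > k)$. Since $\Prob(T_{1-\theta} > k) \sim (2\theta/\sqrt{2\pi})k^{-1/2}$ (as recalled in Section~\ref{sec:CascadingNoEdgeDisconnections}), it remains to establish the matching lower bound $\Prob(T_{g^+} \leq k, T_{1-\theta} > k) = o(k^{-1/2})$.

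For this, I would decompose the event $\{T_{g^+} \leq k, T_{1-\theta} > k\}$ by the first crossing time $\tau := T_{g^+}$. On this event $\tau \in (l,k]$ (because $g^+$ and $1-\theta$ agree on $\{0,\ldots,l\}$) and $1-\theta \leq S_\tau < \tau^\gamma$. Union-bounding over $\tau = i$ and using the strong Markov property at time $i$,
\begin{equation*}
\Prob(T_{g^+} \leq k, T_{1-\theta} > k) \leq \sum_{i=l+1}^{k} \int_0^{i^\gamma} \Prob\bigl(T_{1-\theta} > i,\, S_i - (1-\theta) \in dx\bigr)\, \Prob\bigl(T'_{-x} > k-i\bigr),
\end{equation*}
with $T'_{-x}$ the first time an independent copy of the walk goes below $-x$. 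I would then invoke the standard fluctuation-theory bounds $\Prob(T_{1-\theta} > i, S_i - (1-\theta) \in dx) \leq C x\, i^{-3/2}\, dx$ for $x = o(\sqrt{i})$ and the reflection-type estimate $\Prob(T'_{-x} > n) \leq C(1 \wedge x/\sqrt{n})$, integrate over $x$, and sum, splitting at $i = k/2$ (and further at $k-i = i^{2\gamma}$) to handle the different regimes.

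The main obstacle is that this direct union bound degrades as $\gamma \uparrow 1/2$; after summation one obtains contributions of order $l^{3\gamma - 1/2}/\sqrt{k}$, $k^{3\gamma-1}$, and $k^{4\gamma - 3/2}$, which are $o(k^{-1/2})$ directly only for small $\gamma$. The hypothesis that $l \gg k^\alpha$ for \emph{some} $\alpha \in (0,1)$ is exactly what saves the argument: I would choose $\alpha > 2\gamma$ (possible since $\gamma < 1/2$), so that $\sqrt{l} \gg k^\gamma$, and then condition on $S_l$. Under $\{T_{1-\theta} > l\}$, the scaled value $S_l/\sqrt{l}$ is asymptotically Rayleigh-distributed, so $S_l$ is typically of order $\sqrt{l}$, dwarfing the moving boundary $i^\gamma \leq k^\gamma$ on all of $(l,k]$. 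A Brownian-meander comparison on $[l,k]$ then yields the required $o(k^{-1/2})$ control on the extra crossing probability, completing the proof.
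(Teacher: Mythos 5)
Your easy direction is correct, and your diagnosis of where a single union-bound fails (the exponents $l^{3\gamma-1/2}/\sqrt{k}$, $k^{3\gamma-1}$, $k^{4\gamma-3/2}$, forcing $\gamma < 1/6$) is accurate. But the fix you propose has a genuine gap.

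The issue is the sentence ``I would choose $\alpha > 2\gamma$ (possible since $\gamma < 1/2$).'' The exponent $\alpha$ is \emph{not} a free parameter: the hypothesis ``$k^\alpha \ll l \ll k$ for some $\alpha \in (0,1)$'' is an existential statement about the given sequence $l_k$, and once $l_k$ is fixed the set of admissible $\alpha$ is pinned down (roughly, $\alpha$ must lie below the exponent at which $l_k$ grows). If $l_k = k^{1/8}$ and $\gamma = 0.4$, no $\alpha > 2\gamma = 0.8$ satisfies $k^\alpha \ll l_k$. So your argument covers only the subrange $l \gg k^{2\gamma}$. Worse, the case you exclude is precisely the one needed in the paper: in the proof of Proposition~\ref{prop:NoExceedanceOfLs} the lemma is invoked with $l = m^{(1/4-\epsilon)/2}$ while $k$ runs up to $m^{1-\delta}$ and $\gamma$ can be taken arbitrarily close to $1/2$, so the admissible $\alpha$ is at most about $1/8$ while $2\gamma$ can be close to $1$. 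Your Rayleigh/Bessel-3 estimate indeed needs $\sqrt{l} \gg k^\gamma$, i.e.\ $l \gg k^{2\gamma}$: the probability that $S_l$ itself is below $k^\gamma$ is $\Theta(k^{2\gamma}/l)$, and the infimum of the Bessel(3) approximation started from $\Theta(\sqrt{l})$ dips below $k^\gamma$ with probability $\Theta(k^\gamma/\sqrt{l})$ --- neither quantity is $o(1)$ without $l \gg k^{2\gamma}$.

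The paper avoids this restriction by iterating. It replaces $g^+$ by a piecewise-constant majorant whose jump times $t_{j,k}^\epsilon$ and levels $h^{(j)}$ are tuned so that, at each jump time, the walk conditioned to have survived the previous constant level is at height $\Theta(\sqrt{t_{j,k}^\epsilon})$, which (by construction, $h^{(j+1)} = o(\sqrt{t_{j,k}^\epsilon})$) dwarfs the \emph{next} constant level. Proposition~18 of \cite{Doney2012} then shows that, on each constant interval, the conditioned density is asymptotically unchanged when the level is replaced by $1-\theta$, and gluing finitely many such intervals (there are $r$ of them, with $r$ depending only on $\alpha,\gamma$) gives the lower bound $(1+o(1))\Prob(T_{1-\theta}>k)$ after sending the truncation parameter $\delta\downarrow0$. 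This bootstrapping is what makes the argument valid for every admissible $\alpha$, not just $\alpha > 2\gamma$; your single conditioning at time $l$ is essentially the $r=1$ case of this scheme.
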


\begin{remark}
	We point out that the class of boundary sequences as described in Lemma~\ref{lem:UBSimilarStoppingTimeBehavior} is not covered by the literature. That is, almost all related literature consider moving boundaries that can be described by sequences of the form $(g_i)_{i \in \mathbb{N}}$, i.e. that do not depend on $k$. The exception is~\cite{Doney2012}, but this paper restricts to constant boundaries only.
	
	Still, the literature offers some partial results. First, since $\{ T_{g^+} > k  \} \subseteq \{ T_{1-\theta} > k \}$, 
	\begin{align*}
	\Prob\left(T_{g^+} > k  \right) \leq \Prob\left(T_{1-\theta} > k  \right).
	\end{align*}
	For a lower bound, we would like to remark that $g_{i,l}^+$ is a non-decreasing sequence in $i \geq 1$, where $g_{i,l}^+\leq i^\gamma$ for all $i \in \mathbb{N}$. Therefore, due to Proposition~1 in~\cite{WachtelDenisov2016} (or Theorem~2 in~\cite{Greenwood1987}),
	\begin{align*}
	\Prob\left(T_{g^+} > k  \right) \geq \Prob\left(T_{i^\gamma} > k  \right) \sim c_\gamma \Prob\left(T_{0} > k  \right) \sim c_\gamma' \Prob\left(T_{1-\theta} > k  \right),
	\end{align*}
	where $c_\gamma,c_\gamma' \in [0,\infty)$. Moreover, since 
	\begin{align*}
	\sum_{i=1}^\infty \frac{i^\gamma}{i^{3/2}} < \infty,
	\end{align*}
	it holds that $c_\gamma,c_\gamma' > 0$~\cite{Greenwood1987}. In order to prove Lemma~\ref{lem:UBSimilarStoppingTimeBehavior}, we therefore need to show that $c_\gamma'=1$. 
\end{remark}

\begin{proof}
	First, recall that $\{ T_{g^+} > k  \} \subseteq \{ T_{1-\theta} > k \}$, and hence
	\begin{align*}
	\Prob\left(T_{g^+} > k  \right) \leq \Prob\left(T_{1-\theta} > k  \right).
	\end{align*}
	Therefore, it suffices to show that the reversed inequality holds asymptotically.
	
	We bound the moving boundary $g^+$ by an appropriate piecewise constant boundary with finitely many jumps. For each of these constant intervals, we use the results in~\cite{Doney2012} to show that the trajectory of the random walk is asymptotically indistinguishable with respect to the boundary $g^+$ and the piecewise constant one. Finally, we glue the intervals together to conclude the result.
	
	First, note that since $\gamma \in (0,1/2)$, we can assume without loss of generality that $\alpha>0$ is such that $k^{\alpha(1+\eta)} \ll l \ll k^{\alpha((2\gamma)^{-1}-\eta)}$ with $\eta= ((2\gamma)^{-1}-1)/4 >0$. To define the piecewise constant boundary, let
	\begin{align*}
	r := \min\{j \in \mathbb{N} \colon \alpha (2\gamma)^{-j} >1 \},
	\end{align*}
	and note that $1 \leq r < \infty$ since $2\gamma \in (0,1)$ and $\alpha\in (0,1)$. Choose a fixed $\epsilon>0$ sufficiently small such that 
	\begin{itemize}
		\item $\epsilon < \alpha \eta$, which implies that $l = o\left(k^{\alpha(2\gamma)^{-1}-\epsilon}\right)$;
		\item $\alpha/(2\gamma)-\epsilon < \alpha/(2\gamma)^2 - 2 \epsilon < ... < \alpha/(2\gamma)^r-r \epsilon$;
		\item $\epsilon<(1-\alpha (2\gamma)^{-r})/r$.
	\end{itemize}
	
	\noindent
	Define $t_{j,k}^\epsilon, j \geq 0$ with $t_{0,k}^\epsilon=l$ and
	\begin{align*}
	t_{j,k}^\epsilon = k^{\alpha (2\gamma)^{-j}-j\epsilon}, \hspace{0.5cm} 1 \leq j \leq r.
	\end{align*}
	We point out that $r$ corresponds to the number of times the piecewise constant boundary makes a jump, and the values $t_{j,k}^\epsilon$, $0 \leq j \leq r-1$, correspond to the times where the piecewise constant boundaries jump. Since $\epsilon>0$ is chosen sufficiently small, $l= t_{0,k}^\epsilon \ll t_{1,k}^\epsilon \ll ... \ll t_{r-1,k}^\epsilon \ll k \ll t_{r,k}^\epsilon$ as $k \rightarrow \infty$. Write
	\begin{align*}
	h^{(j)} = \left\{ \begin{array}{ll}
	1- \theta & \textrm{if } j=0,\\
	k^{\alpha (2\gamma)^{-(j-1)}/2-j \epsilon/2} & \textrm{if } 1 \leq j \leq r,
	\end{array}\right.
	\end{align*}
	and define the boundary sequence as
	\begin{align*}
	h_{i,k}^\epsilon = \left\{ \begin{array}{ll}
	h^{(0)}=1-\theta & \textrm{if } i \leq t_{0,k}^\epsilon = l,\\
	h^{(j)} & \textrm{if } t_{j-1,k}^\epsilon < i \leq t_{j,k}^\epsilon, \; 1 \leq j \leq r-1,\\
	h^{(r)}  & \textrm{if } i > t_{r-1,k}^\epsilon.
	\end{array}\right.
	\end{align*}
	We point out that by construction, 
	\begin{align*}
	h^{(j)}/\sqrt{t_{j-1,k}^\epsilon} = k^{-\epsilon/2} \Longrightarrow h^{(j)} = o\left(\sqrt{t_{j-1,k}^\epsilon}\right), \hspace{0.5cm} 1 \leq j \leq r,
	\end{align*}
	and hence $h_{i,k}^\epsilon=o(\sqrt{i})$ for all $i \leq k$ as $k \rightarrow \infty$. Moreover,
	\begin{align*}
	h^{(j)}/(t_{j,k}^\epsilon)^\gamma = k^{j\epsilon(\gamma-1/2)} \Longrightarrow h^{(j)} \geq (t_{j,k}^\epsilon)^\gamma \hspace{0.5cm} 1 \leq j \leq r.
	\end{align*}
	Consequently,
	\begin{align*}
	h_{i,k}^\epsilon \geq g_{i,l}^+, \hspace{0.5cm} 1 \leq i \leq k,
	\end{align*}
	and therefore we obtain the lower bound
	\begin{align*}
	\Prob\left(T_{g^+} > k  \right) \geq \Prob\left(T_{h^\epsilon} > k  \right).
	\end{align*}
	
	Next, we provide a lower bound for the tail behavior of $T_{h^\epsilon}$. Fix $\delta >0$, and note that
	\begin{align*}
	\Prob\left(T_{h^\epsilon} > k  \right) \geq \Prob\left(T_{h^\epsilon} > k ; S_{t_{j,k}^\epsilon} \in \left(\delta \sqrt{t_{j,k}^\epsilon},1/\delta \sqrt{t_{j,k}^\epsilon}\right) \; \forall \, 0 \leq j \leq r-1 \right).
	\end{align*}
	Conditioning on the position of the random walk at the times $t_{j,k}^\epsilon$, $0 \leq j \leq r-1$ yields
	\begin{align*}
	\Prob\left(T_{h^\epsilon} > k  \right) \geq &\int_{u_0 = \delta \sqrt{t_{0,k}^\epsilon}}^{1/\delta \sqrt{t_{0,k}^\epsilon}} \cdots \int_{u_{r-1}=\delta \sqrt{t_{r-1,k}^\epsilon}}^{1/\delta \sqrt{t_{r-1,k}^\epsilon}}   \Prob\left(T_{h^{(r)}} >k-t_{r-1,k}^\epsilon \big| S_0 = u_{r-1} \right) \\
	&\hspace{3cm} \cdot\prod_{j=0}^{r-1}  \Prob\left(S_{t_{j,k}^\epsilon-t_{j-1,k}^\epsilon} \in du_{j} ; T_{h^{(j)}} > t_{j,k}^\epsilon-t_{j-1,k}^\epsilon \big| S_0 = u_{j-1} \right),
	\end{align*}
	where we write $t_{-1}=0$ and $u_{-1}=0$ for convenience. In other words, we partition the trajectory of the random walk in intervals where the boundary is constant. Recall that for every $0 \leq j \leq r-1$, it holds that $t_{j,k}^\epsilon-t_{j-1,k}^\epsilon= t_{j,k}^\epsilon(1+o(1))$, and $h^{(j)} = o(\sqrt{t_{j-1,k}^\epsilon})$ for every $1 \leq j \leq r$. Applying Proposition~18 in~\cite{Doney2012}, we obtain uniformly in $u_{j} = \Theta(\sqrt{t_{j,k}^\epsilon})$, $1 \leq j \leq r-1$,
	\begin{align*}
	&\frac{\Prob\left(S_{t_{j,k}^\epsilon-t_{j-1,k}^\epsilon} \in du_{j} ; T_{h^{(j)}} > t_{j,k}^\epsilon-t_{j-1,k}^\epsilon \big| S_0 = u_{j-1} \right)}{d u_j} \\ 
	&\hspace{3cm} \sim  \sqrt{\frac{2}{\pi}} \frac{V(u_{j-1}-h^{(j)})}{\sqrt{t_{j,k}^\epsilon-t_{j-1,k}^\epsilon}} \frac{u_j-h^{(j)}}{t_{j,k}^\epsilon-t_{j-1,k}^\epsilon} \textrm{exp}\left\{-\frac{\left(u_j-h^{(j)}\right)^2}{2(t_{j,k}^\epsilon-t_{j-1,k}^\epsilon)} \right\},
	\end{align*}
	where $V(\cdot)$ denotes the renewal function corresponding to the decreasing ladder height process of the random walk. The behavior of this function is well-understood: it is non-decreasing and $V(t) \sim t / \E(-S_{T_0}) =t$ as $t\rightarrow \infty$. Since by construction, $h^{(j)}=o(u_{j-1})=o(u_{j})$ for every $1 \leq j \leq r-1$, we obtain
	\begin{align*}
	&\frac{\Prob\left(S_{t_j-t_{j-1}} \in du_{j} ; T_{h^{(j)}} > t_j-t_{j-1} \big| S_0 = u_{j-1} \right)}{d u_j} \\
	&\hspace{1cm}=(1+o(1)) \sqrt{\frac{2}{\pi}} \frac{V(u_{j-1}-(1-\theta))}{\sqrt{t_{j}-t_{j-1}}} \frac{u_j-(1-\theta)}{t_{j}-t_{j-1}} \textrm{exp}\left\{-\frac{\left(u_j-(1-\theta)\right)^2}{2(t_{j}-t_{j-1})} \right\} \\
	&\hspace{1cm}=(1+o(1)) \frac{\Prob\left(S_{t_j-t_{j-1}} \in du_{j} ; T_{1-\theta} > t_j-t_{j-1} \big| S_0 = u_{j-1} \right)}{d u_j}.
	\end{align*}
	Similarly, it holds uniformly in $u_{r-1} = \Theta(\sqrt{t_{r-1,k}^\epsilon})$~\cite[Proposition~18]{Doney2012},
	\begin{align*}
	\Prob\left(T_{h^{(r)}} > k-t_{r-1,k}^\epsilon \big| S_0 = u_{r-1} \right) \sim  \Prob\left(T_{1-\theta} > k-t_{r-1,k}^\epsilon \big| S_0 = u_{r-1} \right).
	\end{align*}
	Then,
	\begin{align*}
	\Prob\left(T_{h^\epsilon} > k  \right) &\geq (1+o(1))  \Prob\left(T_{1-\theta} > k ; S_{t_{j,k}^\epsilon} \in \left(\delta \sqrt{t_{j,k}^\epsilon},  \frac{\sqrt{t_{j,k}^\epsilon}}{\delta}\right) \; \forall \, 0 \leq j \leq r-1 \right).
	\end{align*}
	Conditioning on staying above the constant boundary and applying the union bound yields
	\begin{align*}
	\Prob\left(T_{h^\epsilon} > k  \right) &\geq (1+o(1)) \Prob\left(T_{1-\theta} >k \right) \left( 1- \sum_{j=0}^{r-1} \Prob\left(S_{t_{j,k}^\epsilon}  \not\in \left(\delta \sqrt{t_{j,k}^\epsilon},1/\delta\sqrt{t_{j,k}^\epsilon}\right) \big| T_{1-\theta} > k \right)\right)\\
	&= (1+o(1))\left( 1-r \left( 1- e^{-\frac{\delta^2}{2}}  \right) - r e^{-\frac{1}{2\delta^2}}\right) \Prob\left(T_{1-\theta} >k \right) .
	\end{align*}
	Letting $\delta \downarrow 0$, we find that for every $\epsilon >0$ sufficiently small,
	\begin{align*}
	\liminf_{k\rightarrow\infty} \frac{ \Prob\left(T_{g^+} > k  \right)}{\Prob\left(T_{1-\theta} >k \right)} \geq \liminf_{k\rightarrow\infty} \frac{\Prob\left(T_{h^\epsilon} > k  \right)}{\Prob\left(T_{1-\theta} >k \right)} =1,
	\end{align*}
	from which we conclude that the result holds.
\end{proof}

The next lemma shows a similar result as Lemma~\ref{lem:UBSimilarStoppingTimeBehavior}, yet with a boundary that is monotone non-increasing and initially constant for a sufficiently large time. The proof is similar to the proof of Lemma~\ref{lem:UBSimilarStoppingTimeBehavior}, and therefore given in Appendix~\ref{app:ResultsCasc}.

\begin{lemma}
	Suppose $l:= l_k$ is such that $k^\alpha \ll l \ll k$ for some $\alpha \in (0,1)$ as $k \rightarrow \infty$. Define the boundary sequence
	\begin{align*}
	g_{i,l}^- = \left\{ \begin{array}{ll}
	1-\theta & \textrm{if } i\leq l,\\
	-i^\gamma & \textrm{if } i >l,
	\end{array}\right.
	\end{align*}
	with $\gamma\in (0,1/2)$. Then, as $k \rightarrow \infty$,
	\begin{align*}
	\Prob\left(T_{g^-} > k  \right) \sim \Prob\left(T_{1-\theta} > k  \right) .
	\end{align*}
	\label{lem:LBSimilarStoppingTimeBehavior}
\end{lemma}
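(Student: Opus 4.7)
The plan is to closely mirror the proof of Lemma~\ref{lem:UBSimilarStoppingTimeBehavior}, with the roles of the two bounds interchanged. First I would note the trivial direction: because $g_{i,l}^- \leq 1-\theta$ for every $i \geq 1$, the inclusion $\{T_{1-\theta} > k\} \subseteq \{T_{g^-} > k\}$ is immediate, yielding $\Prob(T_{g^-} > k) \geq \Prob(T_{1-\theta} > k)$. The task therefore reduces to establishing the matching asymptotic upper bound $\limsup_{k \to \infty} \Prob(T_{g^-} > k) / \Prob(T_{1-\theta} > k) \leq 1$.

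To obtain this upper bound, I would construct a piecewise constant boundary $h^\epsilon$ satisfying $h_{i,k}^\epsilon \leq g_{i,l}^-$ on $\{1,\dots,k\}$, so that $\Prob(T_{g^-} > k) \leq \Prob(T_{h^\epsilon} > k)$. Use the same breakpoints $t_{0,k}^\epsilon = l, t_{1,k}^\epsilon, \dots, t_{r,k}^\epsilon$ as in Lemma~\ref{lem:UBSimilarStoppingTimeBehavior}, set $h^{(0)} = 1-\theta$ on $\{i \leq l\}$, and on each block $(t_{j-1,k}^\epsilon, t_{j,k}^\epsilon]$ let $h^{(j)}$ be a negative constant whose absolute value slightly exceeds $(t_{j,k}^\epsilon)^\gamma$, ensuring $h^{(j)} \leq -(t_{j,k}^\epsilon)^\gamma \leq -i^\gamma$ throughout the block while still preserving $|h^{(j)}| = o(\sqrt{t_{j-1,k}^\epsilon})$. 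Since $\gamma < 1/2$, the same arithmetic on $\alpha,\gamma,\epsilon$ as in the previous lemma shows such a choice exists.

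From here I would proceed as in Lemma~\ref{lem:UBSimilarStoppingTimeBehavior}: partition the walk at $t_{0,k}^\epsilon,\dots,t_{r-1,k}^\epsilon$, condition on the typical event $\{S_{t_{j,k}^\epsilon} \in (\delta \sqrt{t_{j,k}^\epsilon},\,\delta^{-1}\sqrt{t_{j,k}^\epsilon})\}$ for a small parameter $\delta>0$, and invoke Proposition~18 of~\cite{Doney2012} on every block. Since $|h^{(j)}| = o(\sqrt{t_{j-1,k}^\epsilon})$ and the renewal function $V$ satisfies $V(u - h^{(j)}) \sim V(u - (1-\theta))$ uniformly in $u = \Theta(\sqrt{t_{j-1,k}^\epsilon})$, the joint densities of the walk killed at $h^{(j)}$ are asymptotically indistinguishable, with a ratio $1+o(1)$, from those killed at the constant boundary $1-\theta$. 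Stitching the blocks and controlling the atypical contribution with the same union-bound factor $r(1-e^{-\delta^2/2}) + re^{-1/(2\delta^2)}$ as in the previous proof, then letting $\delta \downarrow 0$ after $k \to \infty$, yields
\begin{align*}
\Prob(T_{g^-} > k) \leq \Prob(T_{h^\epsilon} > k) \leq (1+o(1))\,\Prob(T_{1-\theta} > k),
\end{align*}
as required. The main obstacle is precisely the construction of $h^\epsilon$ with the correct sign and correct magnitudes: one must verify that a negative piecewise constant boundary of magnitude slightly above $(t_{j,k}^\epsilon)^\gamma$ simultaneously dominates $-i^\gamma$ and remains of order $o(\sqrt{t_{j-1,k}^\epsilon})$. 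This reduces to the same exponent bookkeeping carried out in Lemma~\ref{lem:UBSimilarStoppingTimeBehavior} and introduces no genuinely new difficulty.
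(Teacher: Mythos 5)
Your overall structure matches the paper's: establish the trivial inequality $\Prob(T_{g^-} > k) \geq \Prob(T_{1-\theta} > k)$, build a $k$-dependent piecewise-constant minorant $h^\epsilon \leq g^-$ with the same breakpoints as Lemma~\ref{lem:UBSimilarStoppingTimeBehavior} and with negative levels $-h^{(j)}$ satisfying $(t_{j,k}^\epsilon)^\gamma \leq h^{(j)} = o\bigl(\sqrt{t_{j-1,k}^\epsilon}\bigr)$, decompose at the breakpoints, and apply Proposition~18 of~\cite{Doney2012} block by block. This reproduces the paper's handling of the typical part of the event.

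However, your treatment of the atypical contribution is not a straight translation, and here there is a genuine gap. In Lemma~\ref{lem:UBSimilarStoppingTimeBehavior} the factor $r\bigl(1-e^{-\delta^2/2}\bigr) + re^{-1/(2\delta^2)}$ arises from bounding $\Prob\bigl(S_{t_{j,k}^\epsilon}\notin\textrm{window}\mid T_{1-\theta}>k\bigr)$, and conditioning on $T_{1-\theta}>k$ is legitimate there because the event under consideration sits \emph{inside} $\{T_{1-\theta}>k\}$. In the present lemma $g^- \leq 1-\theta$, so $\{T_{g^-}>k\}\supseteq\{T_{1-\theta}>k\}$; the probability of being atypically positioned at some breakpoint while $T_{g^-}>k$ therefore cannot be bounded by conditioning on $T_{1-\theta}>k$, and there is no off-the-shelf conditioned-walk estimate for the $k$-dependent boundary $h^\epsilon$ either. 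The paper resolves this by passing to the $k$-independent boundary $\tilde h_i=\min\{1-\theta,-i^\gamma\}$, which satisfies $\tilde h\leq g^-$ so $\{T_{g^-}>k\}\subseteq\{T_{\tilde h}>k\}$, and then invokes Theorem~1 of~\cite{DenisovSakhanenkoWachtel2016} both for $\Prob(T_{\tilde h}>k)=\Theta(k^{-1/2})$ and for the Rayleigh-type bound on the position of the walk conditioned on $T_{\tilde h}>k$. This is a genuinely new ingredient relative to the previous lemma (a conditioned-walk estimate over a \emph{moving}, not constant, boundary), so your claim that the proof introduces no new difficulty is overoptimistic; you need to identify this auxiliary boundary and supply the moving-boundary result. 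Everything else in your proposal is sound.
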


From Lemmas~\ref{lem:UBSimilarStoppingTimeBehavior} and~\ref{lem:LBSimilarStoppingTimeBehavior}, the following corollary follows directly:

\begin{corollary}
	Suppose $l:= l_k$ is such that both $k^\alpha \ll l \ll k$ for some $\alpha \in (0,1)$, and the boundary sequence satisfies
	\begin{align*}
	g_{i,l} = \left\{ \begin{array}{ll}
	1-\theta & \textrm{if } i\leq l,\\
	o(i^\gamma) & \textrm{if } i >l,
	\end{array}\right.
	\end{align*}
	for some $\theta>0$ and $\gamma \in (0,1/2)$. Then, as $k \rightarrow \infty$,
	\begin{align*}
	\Prob\left(T_{g} > k  \right) \sim \Prob\left(T_{1-\theta} > k  \right) \sim \frac{2\theta}{\sqrt{2\pi}} k^{-1/2}.
	\end{align*}
	\label{cor:SimilarStoppingTimeBehavior}
\end{corollary}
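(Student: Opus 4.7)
The plan is to establish the corollary directly as a sandwich argument between the two preceding lemmas, exploiting the monotonicity of first-passage times in the boundary. The key observation is that the first-passage time functional $T_g = \min\{i : S_i < g_i\}$ is anti-monotone in $g$: if $g_i \leq g_i'$ for all $i$, then $\{S_i < g_i\} \subseteq \{S_i < g_i'\}$ for each $i$, so $T_g \geq T_{g'}$ almost surely, and consequently $\Prob(T_g > k) \geq \Prob(T_{g'} > k)$. Thus it suffices to pointwise sandwich $g$ between $g^-$ and $g^+$.

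To carry this out, first I would check the sandwich $g^-_{i,l} \leq g_{i,l} \leq g^+_{i,l}$ for all $1 \leq i \leq k$ and all $k$ sufficiently large. For $i \leq l$, all three boundaries equal $1-\theta$ and there is nothing to do. For $i > l$, the hypothesis $g_{i,l} = o(i^\gamma)$ combined with $l = l_k \to \infty$ guarantees that for $k$ sufficiently large one has $|g_{i,l}| \leq i^\gamma$ for every $i > l$, which is exactly $g^-_{i,l} \leq g_{i,l} \leq g^+_{i,l}$. By the monotonicity discussed above, this yields
\begin{align*}
\Prob\left(T_{g^+} > k\right) \leq \Prob\left(T_g > k\right) \leq \Prob\left(T_{g^-} > k\right).
\end{align*}

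Next I would invoke Lemma~\ref{lem:UBSimilarStoppingTimeBehavior} and Lemma~\ref{lem:LBSimilarStoppingTimeBehavior} to conclude that both the upper and lower bounds are asymptotically equivalent to $\Prob(T_{1-\theta} > k)$ as $k \to \infty$. Sandwiching then forces
\begin{align*}
\Prob\left(T_g > k\right) \sim \Prob\left(T_{1-\theta} > k\right).
\end{align*}
The second asymptotic equivalence $\Prob(T_{1-\theta} > k) \sim (2\theta/\sqrt{2\pi}) k^{-1/2}$ has already been recorded earlier in the paper: it follows from the main result of~\cite{SloWacZwa2017} together with the identity $\E(-S_{\tau^*_{1-\theta}}) = \theta$, which itself is a consequence of the memoryless property of the exponential increments of $(S_i)$.

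There is essentially no technical obstacle here, since the two lemmas do all of the delicate first-passage analysis (including the control over non-constant monotone boundaries growing like $\pm i^\gamma$ with $\gamma \in (0,1/2)$). The only minor point requiring care is to read the hypothesis ``$g_{i,l} = o(i^\gamma)$ for $i > l$'' in a way that guarantees the required pointwise inequality $|g_{i,l}| \leq i^\gamma$ for every $i > l$ once $k$ is large enough; under the natural interpretation (uniform in $i > l_k$ as $k \to \infty$) this is immediate, and even under the weakest pointwise reading it follows from the fact that $l_k \to \infty$, so that $g^-$ and $g^+$ indeed bracket $g$ on the entire range $1 \leq i \leq k$ for all sufficiently large $k$.
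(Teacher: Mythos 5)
Your proof is correct and is exactly the paper's intended argument: the paper states that the corollary ``follows directly'' from Lemmas~\ref{lem:UBSimilarStoppingTimeBehavior} and~\ref{lem:LBSimilarStoppingTimeBehavior}, and your sandwich via the anti-monotonicity of $T_g$ in the boundary, together with $l_k\to\infty$ ensuring $|g_{i,l}|\leq i^\gamma$ on $i>l_k$, is precisely how that deduction goes. The final equivalence $\Prob(T_{1-\theta}>k)\sim(2\theta/\sqrt{2\pi})k^{-1/2}$ is indeed the one already recorded in Section~\ref{sec:CascadingNoEdgeDisconnections}, so nothing is missing.
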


\subsubsection{Proof of Proposition~\ref{prop:SublinearK}}
To prove Proposition~\ref{prop:SublinearK}, we first show that the tail of $\tau_m$ behaves the same as that of $T_{1-\theta}$, after which we use the relation in Proposition~\ref{prop:RelationToRW} to derive the tail of $A_{n,\mathbf{d}}$. In view of~\eqref{eq:RelationStoppingTimeRegularRW}, we need to understand the behavior of the random walk
\begin{align*}
Y_{i,m} = \sum_{j=1}^i (1- L_{j,m}), \hspace{1cm} 1 \leq i \leq m+1,
\end{align*}
where $Y_{0,m}=0$. In order to apply Corollary~\ref{cor:SimilarStoppingTimeBehavior}, we therefore need to show that the random walk is likely to be close to zero for a sufficiently long time $l$, and within $[-i^\gamma, i^\gamma]$ for all $l \leq i \leq k$ for some $0 <\gamma < 1/2$. 

\begin{proposition}
	Suppose $k=o(m^\alpha)$ for some $\alpha \in (0,1)$ and $\gamma \in  (\alpha/2, 1/2)$. Then, as $m \rightarrow \infty$,
	\begin{align*}
	\Prob\left( \left| \sum_{j=1}^i \left(1-L_{j,m} \right) \right| > i^\gamma \textrm{ for some } 1 \leq i \leq k \right) = o(m^{-1/2}).
	\end{align*}
	\label{prop:NoExceedanceOfLs}
\end{proposition}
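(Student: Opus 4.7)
The plan is to recast $W_i:=\sum_{j=1}^{i}(1-L_{j,m})$ as a martingale (up to deterministic rescaling) whose predictable quadratic variation is tightly controlled by the outside-giant pool bounded in Theorem~\ref{thm:DeviationsOutsideGiant}, and then apply Freedman's exponential concentration inequality. A mere second-moment/Chebyshev argument barely fails when $\alpha$ is close to~$1$, so exponential concentration is essential.

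First I identify the drift. Combining $\sum_{k<j}L_{k,m}=(j-1)-W_{j-1}$ with $|\hat E_m(j-2)|+|\tilde E_m(j-2)|=m-j+2$ rearranges the recursion~\eqref{eq:SurgeFakeNoTheta} to
\[
W_j-W_{j-1}=\begin{cases} -(W_{j-1}+|\tilde E_m(j-2)|)/|\hat E_m(j-2)| & \text{if }e_{j-1}\in\Cmax,\\[2pt] 1 & \text{if }e_{j-1}\notin\Cmax.\end{cases}
\]
Since $\Prob(e_{j-1}\notin\Cmax\mid\mathcal{F}_{j-1})=|\tilde E_m(j-2)|/(m-j+2)$, the two cases combine to yield the clean drift identity $\E[W_j\mid\mathcal{F}_{j-1}]=W_{j-1}(1-1/(m-j+2))$; consequently $M_j:=W_j\cdot m/(m-j+1)$ is a martingale with $M_1=0$.

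Next I work on the event $E:=\{|\tilde E_m(j)|\leq j^\alpha\text{ for all }j\leq k\}$, which has probability $1-o(m^{-1/2})$ by Theorem~\ref{thm:DeviationsOutsideGiant}. On $E$ the inequality $|\tilde E_m(j)|\leq|\hat E_m(j)|$ holds for all $j\leq k$, and a short induction on the recursion above yields the deterministic a~priori bound $|W_j|\leq j$. Plugging this into the explicit conditional second-moment computation and invoking the elementary fact $\alpha(2-\alpha)\leq 1$ for $\alpha\in(0,1)$ to absorb a $j^2/m^2$ remainder into the leading $j^\alpha/m$ term, one obtains the key estimate $\mathrm{Var}(W_j-W_{j-1}\mid\mathcal{F}_{j-1})\leq C j^\alpha/m$ on $E$. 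Parallel computations give $|M_j-M_{j-1}|=O(1)$ and $\sum_{j\leq i}\mathrm{Var}(M_j-M_{j-1}\mid\mathcal{F}_{j-1})\leq C' i^{\alpha+1}/m$.

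To conclude, I split $\{1\leq i\leq k\}$ at a threshold $m^{1/4-\varepsilon}$ with $\varepsilon>0$ small. For $i\leq m^{1/4-\varepsilon}$, Proposition~\ref{propo:DisconnectivityEarly} (with $\beta=1/4-\varepsilon$) ensures the graph remains connected except on an event of probability $O(m^{-1/2-2\varepsilon})=o(m^{-1/2})$; when it is connected, $L_{j,m}\equiv 1$ and so $W_i\equiv 0$. For $i\in(m^{1/4-\varepsilon},k]$, Freedman's inequality applied to $M_j$ with $t=i^\gamma$ yields
\[\Prob(|W_i|>i^\gamma;\,E)\leq 2\exp\!\bigl(-c\,i^{2\gamma}/(i^{\alpha+1}/m+i^\gamma)\bigr),\]
and the hypotheses $\gamma>\alpha/2$ and $\alpha<1$ are precisely tuned so that this exponent is at least $m^\eta$ for some fixed $\eta>0$ uniformly over the range, giving a super-polynomial bound. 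A union bound then delivers $o(m^{-1/2})$. The main obstacle is the apparent circularity in the variance estimate (the conditional variance depends on the very quantity $|W_{j-1}|$ one is trying to control), and this is broken cleanly by the deterministic a~priori bound $|W_j|\leq j$ available on $E$.
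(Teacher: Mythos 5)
Your proposal is correct and arrives at the stated estimate, but by a genuinely different route from the paper's. The paper also reduces small $i$ to Proposition~\ref{propo:DisconnectivityEarly} and works on the high-probability event from Theorem~\ref{thm:DeviationsOutsideGiant}, but for $l\le i\le k$ it argues directly from the Bernoulli decomposition $L_{j,m}=\bigl(\pi_j^{-1}+Y_{j-1,m}/|\hat E_m(j-2)|\bigr)\textrm{Ber}(\pi_j)$: it decomposes $\{Y_{i,m}<-i^\gamma\}$ over the last time $\sigma_i$ the walk was nonnegative, discards the nonpositive correction terms in the excursion, and runs a recursive Chernoff estimate that dominates the dependent summands $\pi_j^{-1}\textrm{Ber}(\pi_j)$ by i.i.d.\ $\textrm{Ber}(p)$ with $p=1-i^{\alpha-1}$. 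You instead identify the exact martingale $M_j=W_j\,m/(m-j+1)$ from the drift relation $\E[W_j\mid\mathcal F_{j-1}]=W_{j-1}(1-1/(m-j+2))$, bound its increments and predictable bracket pointwise on the event $\{|\tilde E_m(j)|\le j^\alpha\ \forall j\le k\}$, and invoke Freedman's inequality once, then union-bound over $i$. This is cleaner and more modular: it avoids the $\sigma_i$ decomposition and the recursive domination step, and in fact yields a sharper per-step variance ($O(j^\alpha/m)$ rather than the paper's $O(j^{\alpha-1})$), though both are ample. Both proofs ultimately rely on the a~priori control $|W_j|\le j$; you make it explicit by induction, while the paper buries it in the $(1+o(1))$ factor that discards the correction term. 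Two small points to tighten if writing this out fully: the conditioning event is not adapted, so Freedman should be applied to the martingale stopped at the first $j$ with $|\tilde E_m(j)|>j^\alpha$ or $|W_j|>j$, so that the increment and variance bounds hold almost surely rather than merely on a future-measurable set; and the reduction $j^2/m^2=o(j^\alpha/m)$ via $\alpha(2-\alpha)\le 1$ uses $j\le k=o(m^\alpha)$ in an essential way and should appear explicitly in the chain of estimates.
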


\begin{proof}
	From Proposition~\ref{propo:DisconnectivityEarly}, it follows that the probability that there are disconnections when removing less than $o(m^{1/4 -\epsilon})$ edges with $\epsilon \in (0,1/4)$ is of order $o(m^{-1/2})$. Therefore, for every $l = o(m^{1/4-\epsilon})$ with $\epsilon \in (0, 1/4)$, it is likely that $L_{i,m} = 0$ for every $i \leq l$ , and hence
	\begin{align*}
	\Prob&\left( \left| \sum_{j=1}^i \left(1-L_{j,m} \right) \right| > i^\gamma \textrm{ for some } 1 \leq i \leq l \right)\leq \Prob\left( \left| \sum_{j=1}^i \left(1-L_{j,m} \right) \right| \neq 0 \textrm{ for some } 1 \leq i \leq l \right) = o(m^{-1/2}).
	\end{align*}
	Therefore, to prove the proposition, it suffices to show that for every $k$ for which $m^{1/4-\epsilon} \ll k \ll m^\alpha$ for some $\alpha \in (0,1)$ and $\epsilon \in (0,1/4)$, 
	\begin{align*}
	\Prob\left( \left| \sum_{j=l}^i \left(1-L_{j,m} \right) \right| > i^\gamma \textrm{ for some } l \leq i \leq k \right) = o(m^{-1/2}),
	\end{align*}
	where e.g. $l = m^{(1/4-\epsilon)/2}$. Write $\pi_1=1$, and 
	\begin{align*}
	\pi_i = \frac{|\hat{E}_m(i-2)|}{m-i+2}, \hspace{1cm} 2 \leq i \leq m+1,
	\end{align*}
	a random variable representing the probability that edge $e_{i-1}$ is in the giant. Let ${Ber}(\pi)$ denote a Bernoulli distributed random variable with success probability $\pi$, and note that
	\begin{align}
	L_{i,m} = \left( \frac{1}{\pi_i} + \frac{Y_{i-1,m}}{|E_m(i-2)|} \right) \textrm{Ber}(\pi_i) \geq 0.
	\label{eq:StepVariableAsBernoulli}
	\end{align}
	In view of Theorem~\ref{thm:DeviationsOutsideGiant}, we observe that $\pi_i$ is likely to be
	\begin{align*}
	\pi_i \geq \frac{m-i+2-(i-2)^\alpha}{m-i+2} \geq 1-i^{\alpha-1}.
	\end{align*}
	More precisely, let $\mathcal{E} = \{\pi_i=1 \;  \forall i < l, \pi_i \geq 1-i^{\alpha-1}, \; \forall l \leq i \leq k  \}$. Then,
	\begin{align*}
	\Prob&\left( \left| \sum_{j=1}^i \left(1-L_{j,m} \right) \right| > i^\gamma \textrm{ for some } l \leq i \leq k \right)\leq \Prob\left( \left| \sum_{j=1}^i \left(1-L_{j,m} \right) \right| > i^\gamma \textrm{ for some } l \leq i \leq k  \,\bigg|\, \mathcal{E} \right) + \Prob(\mathcal{E}^c),
	\end{align*}
	where due to Theorem~\ref{thm:DeviationsOutsideGiant}, it holds that $\Prob(\mathcal{E}^c) = o(m^{-1/2})$. Next, we show that the summed probabilities have an exponentially decaying tail. Define the stopping time 
	\begin{align*}
	\sigma_i = \sup\left\{j \in \mathbb{N} : j \leq i, Y_{i,m} \geq 0 \right\}.
	\end{align*}
	We remark that $\sigma_i \geq l$. Due to~\eqref{eq:StepVariableAsBernoulli}, it holds for every $l \leq i \leq k$,
	\begin{align*}
	\Prob\left( Y_{i,m} < - i^\gamma  \bigg| \mathcal{E} \right) &\leq \sum_{r=l}^{i-1} \Prob\left(  \sum_{j=1}^i L_{j,m}  > i + i^\gamma ; \sigma_i =r \bigg| \mathcal{E} \right)\\
	& \leq \sum_{r=l}^{i-1} \Prob\left( -Y_{r,m} + \sum_{j=r+1}^i \frac{1}{\pi_i} \textrm{Ber}(\pi_i)  > (i-r) + i^\gamma ; \sigma_i=t \bigg| \mathcal{E} \right)(1+o(1)) \\ 
	&\leq \sum_{r=l}^{i-1}  \; \Prob\left(  \sum_{j=r+1}^i \frac{1}{\pi_i} \textrm{Ber}(\pi_i)  > (i-r) + i^\gamma \bigg| \mathcal{E} \right)(1+o(1)) .
	\end{align*}
	Applying Chernoff's bound, we obtain for every $t > 0$ 
	\begin{align*}
	\Prob&\left(  \sum_{j=r+1}^i \frac{1}{\pi_i} \textrm{Ber}(\pi_i)  > (i-r) + i^\gamma \bigg| \mathcal{E} \right)  \leq  e^{-t(i-r+i^\gamma)} \E \left[ \exp\left\{ t \sum_{j=r+1}^i \frac{1}{\pi_j} \textrm{Ber}(\pi_j) \right\} \bigg| \mathcal{E} \right].
	\end{align*}
	Although the random variables $\pi_1,...,\pi_i$ are not independent, they are conditioned to be close to one and satisfy a Markovian property. Let $p=1-i^{\alpha-1}$, and note that the conditional event $\mathcal{E}$ implies that $\pi_j \geq p$ for all $1 \leq j \leq i$. Define $\mathcal{F}_i$ as the filtration generated by removing $i$ edges uniformly at random. Applying the law of total expectation and noting that $1+x(e^{t/x}-1)$ is a (strictly) decreasing function for all $t >0$, we observe that
	\begin{align*}
	\E \left[ \exp\left\{ t \sum_{j=1}^i \frac{1}{\pi_j} \textrm{Ber}(\pi_j) \right\} \bigg| \mathcal{E} \right] &= \E \left[ \E \left[ \exp\left\{ t \sum_{j=1}^i \frac{1}{\pi_j} \textrm{Ber}(\pi_j) \right\} \bigg| \mathcal{F}_{i-1} ; \mathcal{E} \right] \right] \\
	&=\E \left[ \exp\left\{ t \sum_{j=1}^{i-1} \frac{1}{\pi_j} \textrm{Ber}(\pi_j) \right\} \E \left[ 1+\pi_i (e^{t/\pi_i}-1) \bigg| \mathcal{F}_{i-1} ; \mathcal{E} \right] \right] \\ 
	&\leq \left( 1+p (e^{t/p}-1) \right) \E \left[ \exp\left\{ t \sum_{j=1}^{i-1} \frac{1}{\pi_j} \textrm{Ber}(\pi_j) \right\} \bigg| \mathcal{E} \right].
	\end{align*}
	Applying the same argument recursively yields the bound
	\begin{align*}
	\Prob\left(  \sum_{j=r+1}^i \frac{1}{\pi_i} \textrm{Ber}(\pi_i)  > (i-r) + i^\gamma \bigg| \mathcal{E} \right) \leq  e^{-t(i-r+i^\gamma)}  \left( 1+p (e^{t/p}-1) \right)^{i-r}
	\end{align*}
	for every $t \geq 0$. We point out that the right-hand side corresponds exactly to the Chernoff bound that would have been obtained if we consider the sum of $i-r$ Bernoulli distributed independent random variables with parameter $p$. It follows that
	\begin{align*}
	\Prob&\left(  \sum_{j=r+1}^i \frac{1}{\pi_i} \textrm{Ber}(\pi_i)  > (i-r) + i^\gamma \bigg| \mathcal{E} \right) \leq \exp\left\{- \frac{i^{2\gamma}}{2(i-r)p(1-p)} \right\} =  \exp\left\{-\frac{1}{2} i^{2\gamma-\alpha} \right\}(1+o(1)).
	\end{align*}
	We conclude that
	\begin{align*}
	\Prob\left( Y_{i,m} < - i^\gamma  \bigg| \mathcal{E} \right) \leq  i \exp\left\{-\frac{1}{2} i^{2\gamma-\alpha} \right\} (1+o(1)).
	\end{align*}
	
	On the other hand, we can use analogous arguments to bound $\Prob\left( Y_{i,m} > i^\gamma  \big| \mathcal{E} \right)$. This would yield
	\begin{align*}
	\Prob\left( Y_{i,m} > i^\gamma  \bigg| \mathcal{E} \right)\leq i \exp\left\{-\frac{1}{4} i^{2\gamma-\alpha} \right\} (1+o(1)).
	\end{align*}
	We conclude that
	\begin{align*}
	\Prob\left( \left| \sum_{j=l}^i \left(1-L_{j,m} \right) \right| > i^\gamma \textrm{ for some } l \leq i \leq k \right) &\leq  \sum_{i=l}^k \Prob\left( \left| \sum_{j=1}^i \left(1-L_{j,m} \right) \right| > i^\gamma  \bigg| \mathcal{E} \right) + o(m^{-1/2})\\
	&\leq \sum_{i=l}^k  2 i \exp\left\{-\frac{1}{8} i^{2\gamma-\alpha}\right\} + o(m^{-1/2}) = o(m^{-1/2}).
	\end{align*}
\end{proof}

The tail behavior of $\tau$ follows directly by combining Proposition~\ref{prop:NoExceedanceOfLs} and Corollary~\ref{cor:SimilarStoppingTimeBehavior}.

\begin{corollary}
	If $k= o(m^{\alpha})$ for some $\alpha \in (0,1)$, then as $m \rightarrow \infty$,
	\begin{align*}
	\Prob\left(\tau_m > k  \right) \sim \Prob\left(T_{1-\theta} > k  \right) \sim \frac{2\theta}{\sqrt{2\pi}} k^{-1/2}.
	\end{align*}
	\label{cor:TauAssBehavior}
\end{corollary}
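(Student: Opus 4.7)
The plan is to combine Proposition~\ref{prop:NoExceedanceOfLs} with Corollary~\ref{cor:SimilarStoppingTimeBehavior} through a sandwich argument. By~\eqref{eq:RelationStoppingTimeRegularRW}, the stopping time $\tau_m$ is the first-passage time of $(S_i)$ over the random moving boundary $g^{\ast}_i := 1-\theta + Y_{i,m}$, where $Y_{i,m}=\sum_{j=1}^{i}(1-L_{j,m})$. The structural point that makes this argument clean is that $(L_{j,m})_{j}$ is a deterministic functional of the (uniformly random) edge-removal permutation alone, whereas $(S_i)$ is driven by the independent exponential spacings representing the uniform order statistics $U^m_{(i)}$; consequently $(L_{j,m})$ is independent of $(S_i)$.

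Fix $\gamma\in(\alpha/2,1/2)$ and $\gamma'\in(\gamma,1/2)$, and pick $l=l_m$ with $l\ll\min(k,m^{1/4-\epsilon})$ for some small $\epsilon>0$ while $l\gg k^{\alpha''}$ for some $\alpha''\in(0,1)$ — such an $l$ is always available under $k=o(m^{\alpha})$ with $\alpha\in(0,1)$. Define the good event
\[
\mathcal{G}_k \;=\; \bigl\{L_{i,m}=1 \text{ for all } i\le l\bigr\}\cap\bigl\{|Y_{i,m}|\le i^{\gamma} \text{ for all } l<i\le k\bigr\}.
\]
Since no disconnection within the first $l$ uniform removals forces $L_{i,m}=1$ for $i\le l$ (by induction on~\eqref{eq:SurgeFakeNoTheta}, using $|\hat{E}_m(i-2)|=m-i+2$), Proposition~\ref{propo:DisconnectivityEarly} applied with $\beta=1/4-\epsilon$ yields $\Prob(\text{first clause fails})=O(m^{-1/2-2\epsilon})=o(m^{-1/2})$; the second clause has probability $1-o(m^{-1/2})$ directly from Proposition~\ref{prop:NoExceedanceOfLs}. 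Hence $\Prob(\mathcal{G}_k^c)=o(m^{-1/2})=o(k^{-1/2})$, using $k\ll m$.

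Next, introduce the deterministic boundaries
\[
g^{\pm}_{i,l}\;=\;\begin{cases} 1-\theta,& i\le l,\\ 1-\theta\pm i^{\gamma},& i>l,\end{cases}
\]
which both satisfy the hypotheses of Corollary~\ref{cor:SimilarStoppingTimeBehavior} with exponent $\gamma'$, since $\pm i^{\gamma}=o(i^{\gamma'})$ and $k^{\alpha''}\ll l\ll k$. On $\mathcal{G}_k$ we have $g^{-}_{i,l}\le g^{\ast}_i\le g^{+}_{i,l}$ for every $1\le i\le k$, and therefore
\[
\{T_{g^+}>k\}\cap\mathcal{G}_k\;\subseteq\;\{\tau_m>k\}\cap\mathcal{G}_k\;\subseteq\;\{T_{g^-}>k\}\cap\mathcal{G}_k.
\]
Using the independence of $\mathcal{G}_k$ from $(S_i)$ together with Corollary~\ref{cor:SimilarStoppingTimeBehavior}, we obtain
\[
\Prob(T_{g^+}>k)\,\Prob(\mathcal{G}_k) \;\le\; \Prob(\tau_m>k) \;\le\; \Prob(T_{g^-}>k) + \Prob(\mathcal{G}_k^c),
\]
and both extremes are asymptotic to $\tfrac{2\theta}{\sqrt{2\pi}}k^{-1/2}$, which squeezes $\Prob(\tau_m>k)$ to the same asymptotic and proves the corollary.

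The step I anticipate requires the most care is making the independence $(L_{j,m})\perp(S_i)$ rigorous within the joint construction of Section~\ref{sec:RWFormulation}: one needs to verify explicitly that the edge-removal permutation determining the $L_{j,m}$ can be sampled independently of the exponentials encoding the uniform spacings between consecutive order statistics. Given the coupling already set up, this is a short measure-theoretic check, after which the sandwich is a deterministic boundary comparison and Corollary~\ref{cor:SimilarStoppingTimeBehavior} does all of the analytic work.
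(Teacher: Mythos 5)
Your proof is correct and supplies exactly the argument the paper leaves implicit when it asserts that the corollary ``follows directly by combining Proposition~\ref{prop:NoExceedanceOfLs} and Corollary~\ref{cor:SimilarStoppingTimeBehavior}'': the good event $\mathcal{G}_k$ (controlled by Propositions~\ref{propo:DisconnectivityEarly} and~\ref{prop:NoExceedanceOfLs}), the deterministic sandwich boundaries $g^{\pm}_{i,l}$ falling within the scope of Corollary~\ref{cor:SimilarStoppingTimeBehavior}, and the independence of the permutation-driven $(L_{j,m})$ from the exponential-driven $(S_i)$ (which is built into the coupling of Section~\ref{sec:RoadMapProof}) together yield the two-sided bound that squeezes $\Prob(\tau_m>k)$ to $\tfrac{2\theta}{\sqrt{2\pi}}k^{-1/2}$. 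This is essentially the same approach the paper intends, made explicit.
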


Proposition~\ref{prop:SublinearK} follows from Corollary~\ref{cor:TauAssBehavior} if we show that conditioning on the event that $S_{m+1,m} = S_{m+1} =0$ does not change the tail of the stopping time $\tau_m$. Indeed, this turns out to be the case.

\begin{proof}[Proof of Proposition~\ref{prop:SublinearK}]
	We show that asymptotically the behavior of the conditioned stopping time $\tau_m | S_{m+1}=0$ is determined solely by what happens for the increments until time $k$. Note that by Proposition~\ref{prop:RelationToRW} 
	\begin{align*}
	\Prob\left( \hat{A}_{n,\mathbf{d}} \geq \kappa(k) \right)& \sim \Prob\left(S_i \geq 1-\theta + \sum_{j=1}^i \left(1-L_{j,m} \right) , \;\;\; i=1,...,k \bigg| \,  S_{m+1} =0 \right)=\Prob\left(\tau_m > k \big| \,  S_{m+1} =0 \right).
	\end{align*}
	Fix $\epsilon \in (0,1)$. We bound the probability terms both from above and below, and show that these bounds asymptotically behave the same as $\epsilon \downarrow 0$. Denote by $f_{i}(\cdot)$ the density of the random walk $S_i$ at time $i \geq 1$. Since the density of the random walk is bounded, we point out that it holds that~\cite{Petrov1975}
	\begin{align}
	\lim_{m \rightarrow \infty} \sup_{x \in \mathbb{R}} \left| \sqrt{m}f_m(\sqrt{m}x)-\phi(x)\right|=0,
	\label{eq:DensityUniformConvergenceToNormal}
	\end{align}
	where $\phi(\cdot)$ denotes the standard normal density function. Note that
	\begin{align*}
	\Prob\left(\tau_m > k \big| \,  S_{m+1} =0 \right) &= \Prob\left(\tau_m > k ; S_k \leq \epsilon \sqrt{m} \big| \,  S_{m+1} =0 \right)  + \Prob\left(\tau_m > k ; S_k > \epsilon \sqrt{m} \big| \,  S_{m+1} =0 \right).
	\end{align*}
	For the first term, we observe
	\begin{align*}
	\Prob\left(\tau_m > k ; S_k \leq \epsilon \sqrt{m} \big| \,  S_{m+1} =0 \right) &= \frac{1}{f_{m+1}(0)} \int_{-\infty}^{\epsilon \sqrt{k}} \Prob\left(\tau_m > k ; S_k \in du \right) f_{m+1-k}(-u)  \\
	& \leq \frac{1}{f_{m+1}(0)} \Prob\left(\tau_m > k  \right) \sup_{u \in [ 1-\theta + \sum_{j=1}^k \left(1-L_{j,m} \right) , \epsilon \sqrt{k} ]} f_{m+1-k}(-u)  .
	\end{align*}
	Due to~\eqref{eq:DensityUniformConvergenceToNormal},
	\begin{align*}
	f_{m+1}(0) = \frac{(1+o(1))}{\sqrt{2\pi m}}
	\end{align*}
	and 
	\begin{align*}
	\sup_{x \in \mathbb{R}} f_{i}(\sqrt{i} x) \leq \frac{1+o(1)}{\sqrt{2\pi i}}
	\end{align*}
	as $i \rightarrow \infty$. This yields the upper bound
	\begin{align*}
	\limsup_{m \rightarrow \infty} \frac{\Prob\left(\tau_m > k ; S_k \leq \epsilon \sqrt{m} \big| \,  S_{m+1} =0 \right) }{ \Prob\left(\tau_m > k \right) } \leq  \limsup_{m \rightarrow \infty} \frac{\sqrt{2\pi m}}{\sqrt{2\pi (m+1-k)}} = 1.
	\end{align*}
	For the second term, we show it is negligible. Note that
	\begin{align*}
	\Prob\left(\tau_m > k ; S_k > \epsilon \sqrt{m} \big| \,  S_{m+1} =0 \right)& \leq \frac{\Prob\left(S_k > \epsilon \sqrt{m} \right) }{f_{m+1}(0) } = (1+o(1)) \sqrt{2\pi m } \, \Prob\left(S_k > \epsilon \sqrt{m} \right).
	\end{align*}
	Applying Chernoff's bound, it holds for every $t \geq 0$,
	\begin{align*}
	\Prob\left(S_k > \epsilon \sqrt{m} \right) \leq \textrm{exp}\left\{-t \epsilon \sqrt{m} + k t + k \log\left(\frac{1}{1+t}\right) \right\}.
	\end{align*}
	In particular, this holds for $t = \epsilon \sqrt{m} / (k- \epsilon \sqrt{m}) >0$ (for $m$ large enough). Using this choice of $t$ and applying series expansions, we derive
	\begin{align*}
	\Prob\left(S_k > \epsilon \sqrt{m} \right) &\leq \textrm{exp}\left\{- \left(\frac{\epsilon^2 m}{k} + \epsilon \sqrt{m}\right) \frac{1}{1-\epsilon\sqrt{m}/k} + k \log\left(1-\frac{\epsilon\sqrt{m}}{k} \right) \right\} \\
	&= \textrm{exp}\left\{- \frac{\epsilon^2 m+\epsilon \sqrt{m}k}{k}  \left( 1+\frac{\epsilon\sqrt{m}}{k} + O\left(\frac{m}{k^2} \right) \right) - \epsilon\sqrt{m} - \frac{\epsilon^2 m}{2 k} -   O\left(\frac{m^{3/2}}{k^2} \right) \right\} \\
	&= \textrm{exp}\left\{ -\frac{\epsilon^2 m}{k} +o(1) \right\} .
	\end{align*}
	Due to Corollary~\ref{cor:TauAssBehavior},
	\begin{align*}
	\Prob\left(\tau_m > k  \right) = \Theta\left(k^{-1/2}\right),
	\end{align*}
	and hence
	\begin{align*}
	\frac{\Prob\left(\tau_m > k ; S_k > \epsilon \sqrt{m} \big| \,  S_{m+1} =0 \right) }{\Prob\left(\tau_m > k  \right) } = O\left( \sqrt{km} \; \textrm{exp}\left\{ -\frac{\epsilon^2 m}{k} +o(1) \right\} \right) = o(1).
	\end{align*}
	We conclude the upper bound
	\begin{align*}
	\limsup_{m \rightarrow \infty} \frac{\Prob\left(\tau_m > k \big| \,  S_{m+1} =0 \right) }{\Prob\left(\tau_m > k  \right) } \leq 1.
	\end{align*}
	
	For a lower bound, we observe
	\begin{align*}
	\Prob\left(\tau_m > k \big| \,  S_{m+1} =0 \right) &\geq \Prob\left(\tau_m > k ; S_k \leq \epsilon \sqrt{m} \big| \,  S_{m+1} =0 \right) \\
	&\geq  (1+o(1))\sqrt{2\pi m} \Prob\left(\tau_m > k  \right) \inf_{u \in [ 1-\theta + \sum_{j=1}^k \left(1-L_{j,m} \right) , \epsilon \sqrt{k} ]} f_{m+1-k}(-u) .
	\end{align*}
	Due to Proposition~\ref{prop:NoExceedanceOfLs}, it holds with probability $1-o(m^{-1/2})$ that
	\begin{align*}
	\bigg| \sum_{j=1}^k \left(1-L_{j,m} \right) \bigg| =o(\sqrt{k}).
	\end{align*}
	Combining this observation with~\eqref{eq:DensityUniformConvergenceToNormal} yields
	\begin{align*}
	\inf_{u \in [ 1-\theta + \sum_{j=1}^k \left(1-L_{j,m} \right) , \epsilon \sqrt{k} ]} f_{m+1-k}(-u) = (1+o(1)) \frac{1}{\sqrt{2\pi m}} e^{-\frac{\epsilon^2}{2}}.
	\end{align*}
	We conclude that
	\begin{align*}
	\liminf_{m \rightarrow \infty} \frac{\Prob\left(\tau_m > k \big| \,  S_{m+1} =0 \right) }{\Prob\left(\tau_m > k  \right) } \geq e^{-\frac{\epsilon^2}{2}}.
	\end{align*}
	As $\epsilon \downarrow 0$, the lower bound tends to one as well. We conclude that as $m \rightarrow \infty$,
	\begin{align*}
	\Prob\left(\tau_m > k \big| \,  S_{m+1} =0 \right) \sim \Prob\left(\tau_m > k  \right) \sim \frac{2\theta}{\sqrt{2\pi}}k^{-1/2}
	\end{align*}
	due to Corollary~\ref{cor:TauAssBehavior}.
\end{proof}

\subsection{Proof of main result}\label{sec:ProofOfMainResultRG}
As is laid out in the proof strategy described in Section~\ref{sec:RoadMapCascFailureProcess}, it only remains to be shown that the stopping times $\upsilon(k)$ and $\varrho(k)$, as defined in~\eqref{eq:upsilonDefinition} and~\eqref{eq:varrhoDefinition} respectively, are close to $k$. This follows from the extremely likely events that only a few components disconnect from the giant, and that such components are relatively small. In particular, it is likely that $\upsilon(k)= k - o(k)$ and $\varrho(k)= k + o(k)$. 

\begin{lemma}
	Suppose $k=o(m^\alpha)$ for some $\alpha \in (0,1)$. Then,
	\begin{align*}
	\Prob\left( \upsilon(k) \leq k- k^\alpha \right) = o(m^{-1/2}).
	\end{align*}
	\label{lem:upsilonBound}
\end{lemma}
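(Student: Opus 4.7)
The plan is to reduce the event $\{\upsilon(k)\le k-k^\alpha\}$ to the deviation event already controlled by Theorem~\ref{thm:DeviationsOutsideGiant}, and then invoke that theorem with the same exponent $\alpha$ that governs the growth rate of $k$.

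First, I would unfold the definition of $\upsilon(k)$: by~\eqref{eq:upsilonDefinition}, $\{\upsilon(k)\le k-k^\alpha\}$ holds iff there exists an integer $j\le k-k^\alpha$ with $j+|\tilde E_m(j)|\ge k$, i.e.\ $|\tilde E_m(j)|\ge k-j$. Since $j$ and $k$ are integers and $j\le k-k^\alpha$, we have $k-j\ge \lceil k^\alpha\rceil\ge k^\alpha$. Moreover $j\le k-k^\alpha<k$, so $j^\alpha<k^\alpha$. Chaining these gives
\begin{align*}
|\tilde E_m(j)|\;\ge\;k-j\;\ge\;k^\alpha\;>\;j^\alpha,
\end{align*}
so we obtain the set inclusion
\begin{align*}
\{\upsilon(k)\le k-k^\alpha\}\;\subseteq\;\left\{\,|\tilde E_m(j)|>j^\alpha\text{ for some }j\le k\,\right\}.
\end{align*}

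Next, I would apply the second part of Theorem~\ref{thm:DeviationsOutsideGiant} (namely~\eqref{eq:SecondClaimCorollaryEdgesOutsideGiant}) with the same exponent $\alpha$ used in the statement of the lemma. Since the hypothesis $k=o(m^\alpha)$ is exactly the one required by that theorem, the probability of the right-hand event is $o(m^{-1/2})$, and the lemma follows by monotonicity.

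There is essentially no obstacle: the lemma statement is calibrated precisely so that the trivial lower bound $k-j\ge k^\alpha$ (valid for $j\le k-k^\alpha$) matches the pointwise deviation exponent $j^\alpha$ supplied by Theorem~\ref{thm:DeviationsOutsideGiant}. The only nontrivial point to double-check is the strict comparison $k^\alpha>j^\alpha$, which uses $j<k$ (guaranteed because $k^\alpha\ge 1$ for $k$ large), together with the integrality that upgrades $k-j\ge k-k^\alpha$ to $k-j\ge\lceil k^\alpha\rceil$.
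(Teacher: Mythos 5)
Your argument is correct and follows essentially the same route as the paper's: both reduce $\{\upsilon(k)\le k-k^\alpha\}$ to the uniform deviation event controlled by~\eqref{eq:SecondClaimCorollaryEdgesOutsideGiant} and invoke Theorem~\ref{thm:DeviationsOutsideGiant} with the same exponent $\alpha$. The only cosmetic difference is that the paper passes through the intermediate bound $|\tilde E_m(\upsilon(k))|\le\max_{1\le j\le k}|\tilde E_m(j)|$ rather than unfolding the definition of $\upsilon(k)$ directly as you do; also note your integrality/ceiling step is superfluous, since the strict inequality $|\tilde E_m(j)|>j^\alpha$ already follows from $k-j\ge k^\alpha>j^\alpha$.
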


\begin{lemma}
	Suppose $k=o(m^\alpha)$ for some $\alpha \in (0,1)$. Then,
	\begin{align*}
	\Prob\left( \varrho(k) > k+ k^{\frac{\alpha+1}{2}} \right) = o(m^{-1/2}).
	\end{align*}
	\label{lem:varrhoBound}
\end{lemma}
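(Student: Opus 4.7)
The plan is to rewrite the event $\{\varrho(k) > k + k^{(\alpha+1)/2}\}$ as an upper tail for the number of removed edges that were already outside the giant at the moment of their removal, and then handle that count by a Chernoff bound layered on top of the high-probability event supplied by Theorem~\ref{thm:DeviationsOutsideGiant}. Set $K := k + k^{(\alpha+1)/2}$. By the definition $\varrho(k) = \min\{j : \kappa(j) \geq k\}$, the event $\{\varrho(k) > K\}$ coincides with $\{\kappa(K) < k\}$, which rearranges to $\{K - \kappa(K) > k^{(\alpha+1)/2}\}$. Writing $N_i^{\mathrm{out}} := m - i - |\hat{E}_m(i)|$ and $X_i := \mathbbm{1}\{\text{the $i$-th removed edge lies outside } \Cmax \text{ at time } i-1\}$, we have $K - \kappa(K) = \sum_{i=1}^K X_i$, and conditional on the natural filtration $\mathcal{F}_{i-1}$ of the first $i-1$ removals, $X_i$ is Bernoulli with parameter $N_{i-1}^{\mathrm{out}}/(m-i+1)$, since the next removal is uniform among the surviving edges.

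Next I would introduce the good event $\mathcal{E} := \{N_i^{\mathrm{out}} \leq i^\alpha \text{ for all } i \leq K\}$, for which Theorem~\ref{thm:DeviationsOutsideGiant} (second claim) yields $\Prob(\mathcal{E}^c) = o(m^{-1/2})$, as $K = o(m^\alpha)$. On $\mathcal{E}$ the conditional success probability $\E[X_i \mid \mathcal{F}_{i-1}]$ is dominated by $p^* := K^\alpha/(m-K)$, and the expected number $K p^* = O(k^{\alpha+1}/m)$ of out-of-giant removals is much smaller than the allowed slack $k^{(\alpha+1)/2}$: indeed $K p^* / k^{(\alpha+1)/2} = O(k^{(\alpha+1)/2}/m) \to 0$, because $k = o(m^\alpha)$ with $\alpha < 1$ forces $k^{(\alpha+1)/2} \leq m^{\alpha(\alpha+1)/2} = o(m)$.

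To upgrade this heuristic to a tail bound I would apply a Chernoff estimate to the thinned process $\widetilde{X}_i := X_i \mathbbm{1}_{\mathcal{E}_{i-1}}$, where $\mathcal{E}_{i-1}$ is the $\mathcal{F}_{i-1}$-measurable restriction of $\mathcal{E}$ to the first $i-1$ removals; this satisfies $\E[\widetilde{X}_i \mid \mathcal{F}_{i-1}] \leq p^*$ \emph{deterministically}. Iterating the tower property gives $\E[e^{\lambda \sum_{i \leq K} \widetilde{X}_i}] \leq \exp(K p^*(e^\lambda - 1))$, and optimizing over $\lambda > 0$ produces a bound of the form $\exp(-c\, k^{(\alpha+1)/2} \log m\, (1+o(1)))$ with $c := 1 - \alpha(\alpha+1)/2 > 0$, which is $o(m^{-1/2})$ as soon as $k^{(\alpha+1)/2} \to \infty$. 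Since $\sum X_i = \sum \widetilde{X}_i$ on $\mathcal{E}$, combining this tail bound with $\Prob(\mathcal{E}^c) = o(m^{-1/2})$ closes the argument. The only delicate point is the clean decoupling via the adapted indicator $\mathbbm{1}_{\mathcal{E}_{i-1}}$, which is needed so that the Chernoff inequality can be applied against a genuinely deterministic upper bound on the conditional means; beyond that, the calculation is routine.
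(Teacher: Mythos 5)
Your proof is correct and takes essentially the same route as the paper's: both rewrite $\{\varrho(k) > K\}$ with $K = k + k^{(\alpha+1)/2}$ as an upper tail for the number of removals that fall outside the giant, invoke the second claim of Theorem~\ref{thm:DeviationsOutsideGiant} to control $m-i-|\hat E_m(i)|$ on a high-probability event, and then close with a Chernoff bound. Your decoupling via the $\mathcal F_{i-1}$-adapted indicators $\mathbbm 1_{\mathcal E_{i-1}}$ is somewhat more careful than the paper's informal stochastic comparison with an i.i.d.\ Bernoulli sum, and your choice of $p^*=K^\alpha/(m-K)$ is slightly tighter than the paper's $k^{\alpha-1}$, yielding a marginally stronger exponent, but the argument is structurally identical.
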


The proofs of these two lemma are fairly straightforward, and given in Appendix~\ref{app:ResultsCasc} for completeness. Using these lemmas, we can prove our main result.

\noindent
\begin{proof}[Proof of Theorem~\ref{thm:MainResultSublinear}]
	Using the proof strategy as laid out in Section~\ref{sec:RoadMapCascFailureProcess}, we have the upper bound
	\begin{align*}
	\Prob\left(A_{n,\mathbf{d}} \geq k  \right) &\leq \Prob\left(\hat{A}_{n,\mathbf{d}} \geq \kappa(\upsilon(k)) \right) \leq \Prob\left(\hat{A}_{n,\mathbf{d}} \geq \kappa(\upsilon(k))  \; \big| \; \upsilon(k) \geq k - k^\alpha \right) + \Prob\left(\upsilon(k) \leq k - k^\alpha \right),
	\end{align*}
	where, due to Proposition~\ref{prop:SublinearK},
	\begin{align*}
	\Prob\left(\hat{A}_{n,\mathbf{d}} \geq \kappa(\upsilon(k))  \; \big| \; \upsilon(k) \geq k - k^\alpha \right) &\leq \Prob\left(\hat{A}_{n,\mathbf{d}} \geq \kappa(k - k^\alpha) \right) \sim \frac{2\theta}{\sqrt{2\pi}} (k - k^\alpha)^{-1/2} \sim \frac{2\theta}{\sqrt{2\pi}} k^{-1/2}.
	\end{align*}
	Due to Lemma~\ref{lem:upsilonBound},
	\begin{align*}
	\Prob\left(\upsilon(k) \leq k - k^\alpha \right) = o(m^{-1/2}) = o(k^{-1/2}).
	\end{align*}
	For the lower bound, we observe
	\begin{align*}
	\Prob&\left(A_{n,\mathbf{d}} \geq k  \right) \geq \Prob\left(\hat{A}_{n,\mathbf{d}} \geq \kappa(\varrho(k)) \right) \geq \Prob\left(\hat{A}_{n,\mathbf{d}} \geq \kappa(\varrho(k))  \; \big| \; \varrho(k) \leq k + k^{(\alpha+1)/2} \right) \Prob\left( \varrho(k) \leq k + k^{(\alpha+1)/2} \right).
	\end{align*}
	By Proposition~\ref{prop:SublinearK},
	\begin{align*}
	\Prob\left(\hat{A}_{n,\mathbf{d}} \geq \kappa(\varrho(k))  \; \big| \; \varrho(k) \leq k + k^{(\alpha+1)/2} \right) &\geq \Prob\left(\hat{A}_{n,\mathbf{d}} \geq \kappa\left(k + k^{(\alpha+1)/2}\right) \right) \\
	&\sim \frac{2\theta}{\sqrt{2\pi}} \left(k + k^{(\alpha+1)/2}\right)^{-1/2} \sim \frac{2\theta}{\sqrt{2\pi}} k^{-1/2},
	\end{align*}
	and due to Lemma~\ref{lem:varrhoBound},
	\begin{align*}
	\Prob\left( \varrho(k) \leq k + k^{(\alpha+1)/2} \right) = 1 - o(m^{-1/2}).
	\end{align*}
\end{proof}

\section{Universality principle}\label{sec:UniversalityPrinciple}
Theorem~\ref{thm:FailureSublinear} described the tail behavior of the failure size in case of sublinear thresholds. In this section, we consider whether the scale-free behavior prevails if the threshold is of linear size, i.e. the threshold is of the same order as the number of vertices/edges. We conjecture that the scale-free behavior prevails up to a critical point. Also, we provide an explanation why the scale-free behavior can extend to a wide class of other graphs as well. We stress that the arguments that are provided in this section are intuitive in nature, and rigorous proofs of the claims remain to be established.

The proof of Theorem~\ref{thm:FailureSublinear} relies on the translation to a first-passage time of the random walk bridge $S_{i,m}$ over a (moving) boundary that is close to constant $1-\theta$. We prove that the difference $S_{i,m}-S_i$ is small (Proposition~\ref{prop:NoExceedanceOfLs}) if $k$ is sublinear, causing the random walk $S_{i,m}$ to be asymptotically indistinguishable from $S_i$ up to time $k$. That is, the random walk bridge is asymptotically indistinguishable from the one corresponding to the star topology. Since $S_i$ is a random walk with independent identically distributed increments with zero mean and finite variance, it is well-known by Donsker's theorem that appropriately scaling the random walk bridge $S_i$ yields convergence to a Brownian bridge. Therefore, the probability that $A_{n,\mathbf{d}}$ exceeds $k$ asymptotically behaves the same as the probability that a Brownian bridge stays above zero until time $k$, multiplied by a constant that relates to the translation of the boundary to any other (constant) boundary. We recall that in case of $S_i = \sum_{j=1}^i (1-\textrm{Exp}_j(1))$ and a boundary (close to) $1-\theta$, this constant is given by $\theta$. 

When the threshold $k:=k_m$ is of the same order as $m$, this analysis does not follow through. The (maximum) difference between the two random walks up to time $k$ is likely to become of order $\Theta(\sqrt{k})=\Theta(\sqrt{m})$, an order of magnitude that affects the asymptotic behavior. Moreover, the number of edges outside the giant is also no longer likely to be of size $o(m)$, and hence we need to understand what the failure behavior typically is in these components as well. The natural question that comes to mind is whether the scale-free behavior in the failure size tail prevails or not for linear-sized thresholds. Next, we argue heuristically why this type of behavior prevails up to a certain critical point.

If we remove $k = \beta m$ edges uniformly at random, where $\beta \in (0,q_c)$, then there is a non-vanishing proportion of edges outside the largest (giant) component~\cite{MolRee95}. Nevertheless, the sizes of the components outside the giant are relatively small, i.e. the number of edges in such a component is at most $\Op(\log m)$. It turns out that this causes it to be likely that whenever a component detaches from the giant, the cascade stops in the small component. More specifically, suppose that the edge with the $i$'th smallest surplus capacity is contained in the giant upon failure, and causes a component to detach a component from the giant. Due to the way that the total load surge is defined, we conjecture that it is likely that the total load surge is close to $i/m + \Theta(1/\sqrt{m})$ upon failure. Since the number of edges in the smaller component is likely to be $O(\log m)$, all the surplus capacities of these edges are likely to be at least $i/m + \Omega(1/\log m)$, which is much larger than the total load surge. That is, all edges are likely to have sufficient capacity to deal with the load, and no more failures occur in the smaller detached component. Moreover, even if the cascade continues, it would contribute at most a logarithmic number of edges to the total failure size. These observations lead to the claim that the dominant contribution in the failure size comes from the number of edges that are contained in the giant upon failure.

To track the failure behavior in the giant component, one can use the sequence of random walks $S_{i,m}$. That is, the translation of the failure size to the first-passage time of the random walk bridge over the constant boundary $1-\theta$ remains (likely to be) true if $k \leq \beta m$ with $\beta \in (0,q_c)$. Although the random walk is no longer close to $S_i$, the increments of $S_{i,m}$ do have zero mean and a variance that is likely to be finite (and non-constant), and hence satisfy a martingale property. Therefore, we conjecture that the probability that the failure size in the giant component exceeds $\kappa(k)$ behaves the same as the probability that a Brownian bridge (with non-constant variance) stays above zero until time $k$, multiplied by a constant.

Since we argued that it should hold that $ A_{n,\textbf{d}} \approx  \hat{A}_{n,\textbf{d}}$, this leads to the following conclusion. Write $k=\alpha m$ with $\alpha \in (0,1)$. In view of~\eqref{eq:EdgesGiant}, we observe that for all $i:=i_m $ sufficiently smaller than the critical point $q_c$, it holds that $\kappa(i) \approx m \int_0^{i/m} \xi_{\mathbf{d}}(q)\mathrm d q$. Write
\begin{align}
\beta_\alpha:= \min_{x \in (0,1)}\left\{\int_0^x\xi_{\mathbf{d}}(q)\mathrm d q = \alpha\right\}.
\label{eq:BetaDefinitionRG}
\end{align}
Then $\varrho(k) \approx \beta_\alpha m$, since
\begin{align*}
\kappa(\varrho(k)) = k = \alpha m \approx  \kappa(\beta_\alpha m).
\end{align*}
Therefore,
\begin{align*}
\Prob( A_{n,\textbf{d}} \geq k ) \sim \Prob( \hat{A}_{n,\textbf{d}} \geq k ) \sim \Prob( \hat{A}_{n,\textbf{d}} \geq \kappa(\beta_\alpha m) ).
\end{align*}

Summarizing, we have the following conjecture for $\overline{CM}_n(d,q)$. Suppose $k= \alpha m$ with $\alpha \in (0,1)$ such that $\beta_\alpha < q_c$ is satisfied. Then, 
\begin{align*}
\Prob( A_{n,\textbf{d}} \geq k ) \sim f(\alpha) k^{-1/2}.
\end{align*}

\begin{figure}[htb]
	\centering
	\includegraphics[height=6cm]{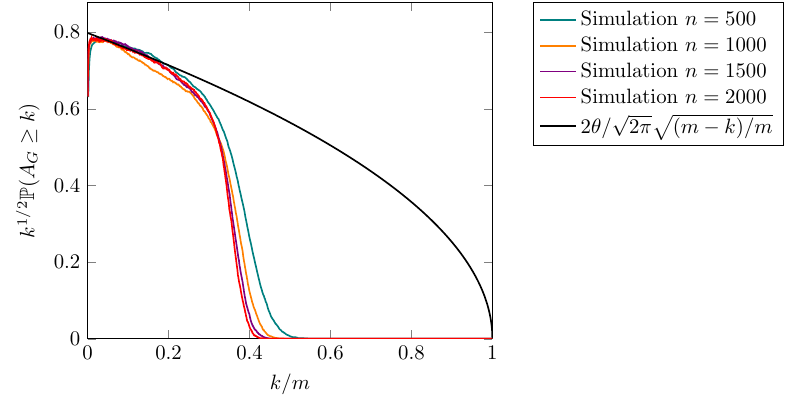}
	\caption{Erased configuration model.}
	\label{fig:CMTheta1}
\end{figure}

To support this conjecture, we performed a Monte-Carlo simulation experiment. In particular, we tested the conjecture against the erased configuration model. That is, we create this graph according to the configuration model mechanism with a prescribed degree sequence. We merge multiple edges and erase self-loops. Moreover, after sampling such a graph, we remove any smaller components such that the final graph is a simple connected graph. Due to the properties of the configuration model, the number of self-loops and multiple edges is very small (if any), and only a finite number of vertices and edges are not contained in the giant. As a result, this graph and the configuration model conditioned on connectivity are indistinguishable asymptotically, and will lead to the same asymptotic result for the number of edge failures. In our simulations we choose $n \in \{500,1000,1500,2000\}$, and a degree sequence with $n_1=\lceil n^{1/3} \rceil$ vertices of degree one, $n_2=n_3=n/2- \lceil n^{1/3} \rceil$ and $n_4= \lceil n^{1/3} \rceil$. Therefore, the number of edges is (close to) $m=5/4n$. The results are displayed in Figure~\ref{fig:CMTheta1}. Indeed, it appears that our conjecture holds in this case.

Not only do we believe that this conjecture holds for the connected configuration model, we argue that the scale-free behavior may hold for a wider range of graphs. In particular, the relevant properties of the configuration model that we used in the analysis are the following. First, it is likely that no (significant) disconnections occur at the beginning of the cascading failure process. For example, in the case of a configuration model, we showed that the first disconnection is likely to occur after $\Theta(\sqrt{m})$ edge failures. Secondly, whenever the cascading failure process causes disconnections to occur, a giant component appears and disconnections only create relatively small components. It is well-known that this property is satisfied up to a certain critical threshold $q_c$ for the configuration model, but this holds in fact for many more types of random graphs. In other words, for our result to prevail in other graph topologies, the graph should satisfy the following two properties:
\begin{itemize}
	\item The first disconnection (if any) is only likely to occur after $\Omega(m^\delta)$ failures, where $\delta >0$;
	\item There exists a critical parameter $q_c$ such that if $q < q_c$, the largest component of the percolated graph is unique w.h.p. and contains a non-vanishing proportion of the vertices and edges. Moreover, all other components are likely to be relatively small for $q< q_c$, e.g.~the second largest component contains at most $\Op( (\log m)^c)$ number of edges for some $c < \infty$.
\end{itemize}

Whenever a graph $G=(V,E)$ satisfies these two properties, we conjecture that the number of edge failures $A_G$ exhibits scale-free behavior. That is, for a range of thresholds $k:= k_m$, it holds that
\begin{align}
\Prob\left( A_{G} \geq k \right) \sim f_{G}(\alpha) k^{-1/2},
\label{eq:UniversalResult}
\end{align}
where $f_{G}(\cdot) >0$ and  $\alpha = \lim_{m \rightarrow \infty} k/m$. In particular, $f_G(0) = 2\theta/\sqrt{2\pi}$. The function $f_G$ depends on the specifics of the graph, such as average degree and more detailed connectivity properties. The range of values for $k$ for which~\eqref{eq:UniversalResult} holds depends on the critical threshold $q_c$ and the typical number of edges that are outside the giant component. 

\begin{figure}[htb]
	\centering
	\includegraphics[height=6cm]{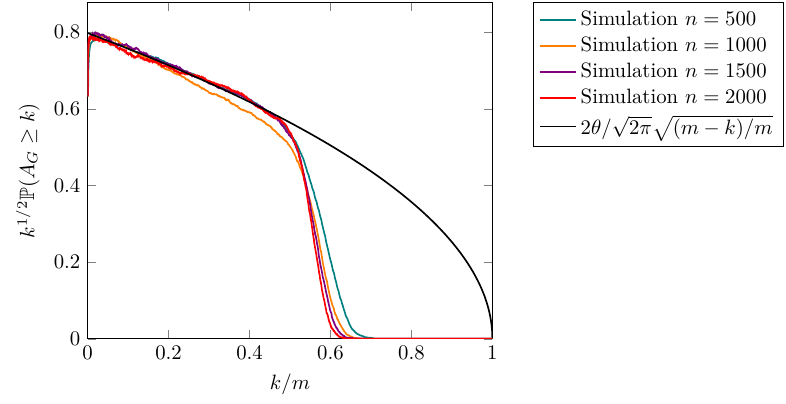}
	\caption{$\lceil \sqrt{n} \rceil \times \lceil \sqrt{n} \rceil$ square lattice graph.}
	\label{fig:LatticeTheta1}
\end{figure}

To test conjecture~\eqref{eq:UniversalResult}, we first consider a $\lceil \sqrt{n} \rceil \times \lceil \sqrt{n} \rceil$ square lattice graph where the opposite boundaries are not connected with $n \in \{500,1000,1500,2000 \}$. On the square lattice it is known that there is a phase transition for the existence of a giant component when $q_c=1/2$~\cite{Kes80}. Moreover, significant disconnections occur only after quite a significant number of failures have occurred. Indeed, to disconnect one edge $e$ (not on the boundary) from the giant we need to remove at least six edges (the ones that share an end-vertex with $e$). This suggests that since there are roughly $2n$ edges in the graph, the first time the process produces an edge disconnected from the giant should be of order $\Theta (n^{5/6})$. Moreover, it is known that in this regime the second largest component in a box of volume $n$ is polylogarithmic in $n$ \cite{KesZha90}. Thus, it satisfies the conditions we conjecture for a graph to be in the same mean-field universality class as the configuration model. In Figure~\ref{fig:LatticeTheta1}, we observe that indeed the first significant disconnections happen after a much longer time than in $\CMd$, and the limiting function for $k^{1/2} \Prob\left( A_{Lattice} \geq k \right) $ remains very close to the setting where no disconnection takes place for a relatively long time. 

\begin{figure}[htb]
	\centering
	\includegraphics[height=6cm]{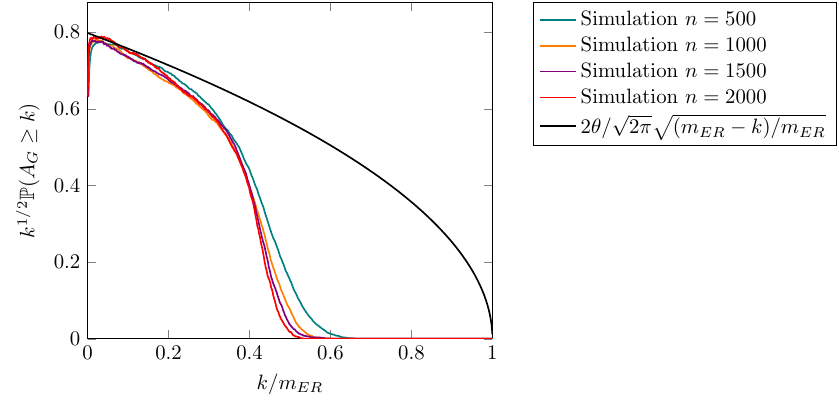}
	\caption{Giant component of the Erd\"os-R\'enyi random graph with edge retention probabability $2/n$, where $m_{ER}=\lambda (1-\eta_\lambda^2)n/2$.}
	\label{fig:ERFigure}
\end{figure}

Secondly, we consider the giant component of the Erd\"os-R\'enyi random graph. That is, for every pair of the $n$ vertices, there exists an edge with probability $\lambda/n$, and we consider the cascading failure process on the giant component. The giant is uniquely defined asymptotically: it is well-known that for every $\lambda>1$ there exists a unique giant component $\Cmax$ for which holds that~\cite{Hofs17},
\begin{align*}
\frac{|\{v : v \in\mathcal{C}_{\max}\}|}{n} \overset{\Prob}{\longrightarrow} \zeta_\lambda,
\end{align*}
where $\zeta_\lambda = 1-\eta_\lambda >0$ and $\eta_\lambda$ satisfies the fixed-point equation
\begin{align*}
\eta_\lambda = \E\left( \eta_\lambda^{\textrm{Pois}(\lambda)} \right)=1- e^{-\lambda\eta_\lambda}.
\end{align*}
In our simulation experiment, we choose $n \in \{500,1000,1500,2000\}$ possible vertices, and $\lambda=2$. Therefore, the graph on which we perform the cascading failure process is likely to have around $\zeta_\lambda n$ vertices and $\lambda (1-\eta_\lambda^2)n/2$ edges (with $\Theta(\sqrt{n})$ fluctuations). From the definition of the Erd\"os-R\'enyi random graph it is clear that if we run a percolation process on it, the resulting graph is again an Erd\"os-R\'enyi random graph, but with a smaller $\lambda$. It is known that for every $\lambda >1$ all the components outside the giant are at most of size $\Theta(\log n)$. Therefore, the disintegration of the network is similar to the one of the configuration model, yet with the critical edge-removal probability corresponding to $q_c= (\lambda -1)/\lambda$ in this case. 

However, the first disconnection is likely to occur after finitely many edge failures, since the number of vertices with degree one in the giant of an Erd\"os R\'enyi graph is likely to be of order $\Theta(n)$. In other words, this graph violates the condition that the first disconnections should occur after $\Omega(m^{\delta})$ edge failures for some $\delta>0$. Nevertheless, it appears from our simulation result that~\eqref{eq:UniversalResult} still prevails, see Figure~\ref{fig:ERFigure}. In other words, the condition on no early disconnections can possibly be relaxed. The analysis would require significant changes, and particularly, the results on the distribution of first-passage times over moving boundaries such as Lemmas \ref{lem:UBSimilarStoppingTimeBehavior} and~\ref{lem:LBSimilarStoppingTimeBehavior}, as the load surge is no longer almost deterministic at the beginning of the process.

\begin{figure}[htb]
	\centering
	\includegraphics[height=6cm]{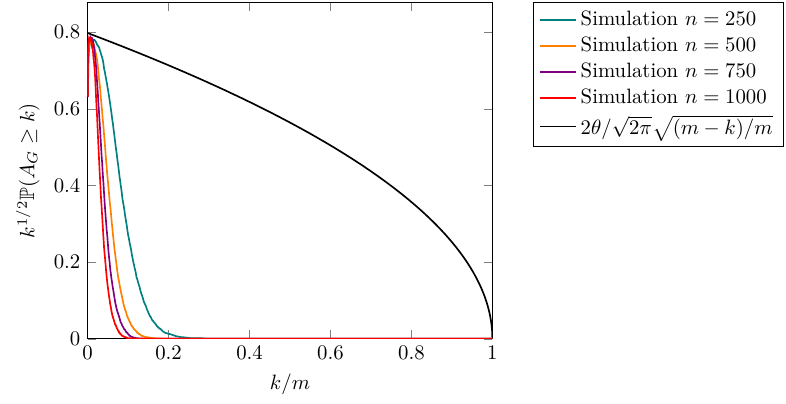}
	\caption{Graph $G=CS(n,4)$.}
	\label{fig:CircularStarTheta1}
\end{figure} 

In contrast, we consider a graph $G=CS(n,4)$ consisting of $n$ star-components with $4$ edges each, connected by a single path of edges connecting all the components. This graph thus consists of $5n$ vertices and also $m=5n$ edges. We observe that as soon as one of the $n$ edges on the single path fails, the remaining graph is a (connected) tree. Therefore, with high probability, the graph would disconnect in two components both of order $\Theta(m)$ after removing only a fixed number of edges. This effect is likely to occur again in both components when edges are removed uniformly at random in these components. This violates both properties we needed to prove our result for the configuration model. We observe in Figure~\ref{fig:CircularStarTheta1} that indeed $k^{1/2} \Prob\left( A_{CS} \geq k \right)$ does not seem to converge to a single function as $n \rightarrow \infty$.

\bibliographystyle{plain}
\bibliography{bibCM}

\begin{thebibliography}{10}

\bibitem{BakTangWiesenfeld1989}
P.~Bak and C.~Tang.
\newblock Earthquakes as a self-organized critical phenomenon.
\newblock {\em Journal of Geophysical Research: Solid Earth},
  94(B11):15635--15637, 1989.

\bibitem{BakTangWiesenfeld1987}
P.~Bak, C.~Tang, and K.~Wiesenfeld.
\newblock Self-organized criticality: An explanation of the 1/f noise.
\newblock {\em Physical Review Letters}, 59:381--384, 1987.

\bibitem{BakTangWiesenfeld1988}
P.~Bak, C.~Tang, and K.~Wiesenfeld.
\newblock Self-organized criticality.
\newblock {\em Physical Review A}, 38:364--374, 1988.

\bibitem{barabasi2005}
A.~Barab{\'a}si.
\newblock The origin of bursts and heavy tails in human dynamics.
\newblock {\em Nature}, 435(7039):207, 2005.

\bibitem{Barabasi1999}
A.~Barab{\'a}si and R.~Albert.
\newblock Emergence of scaling in random networks.
\newblock {\em Science}, 286(5439):509--512, 1999.

\bibitem{Bienstock2016}
D.~Bienstock.
\newblock {\em Electrical transmission system cascades and vulnerability - an
  operations research viewpoint}, volume~22 of {\em {MOS-SIAM} Series on
  Optimization}.
\newblock {SIAM}, 2016.

\bibitem{Carreras2004Chaos}
B.~A. Carreras, V.~E. Lynch, I.~Dobson, and D.~E. Newman.
\newblock Complex dynamics of blackouts in power transmission systems.
\newblock {\em Chaos: An Interdisciplinary Journal of Nonlinear Science},
  14(3):643--652, 2004.

\bibitem{Chakrabarti2006}
B.~K. Chakrabarti.
\newblock A fiber bundle model of traffic jams.
\newblock {\em Physica A: Statistical Mechanics and its Applications},
  372(1):162--166, 2006.

\bibitem{Clauset2009}
A.~Clauset, C.~R. Shalizi, and M.~E.~J. Newman.
\newblock Power-law distributions in empirical data.
\newblock {\em SIAM review}, 51(4):661--703, 2009.

\bibitem{DenisovSakhanenkoWachtel2016}
D.~Denisov, A.~Sakhanenko, and V.~Wachtel.
\newblock First-passage times for random walks with nonidentically distributed
  increments.
\newblock {\em Ann. Probab.}, 46(6):3313--3350, 11 2018.

\bibitem{DhaHofLeeSen16a}
S.~Dhara, R.~van~der Hofstad, J.~S.~H. van Leeuwaarden, and S.~Sen.
\newblock Critical window for the configuration model: finite third moment
  degrees.
\newblock {\em Electronic Journal of Probability}, 22:Paper No. 16, 33, (2017).

\bibitem{Doney2012}
R.~A. Doney.
\newblock Local behaviour of first passage probabilities.
\newblock {\em Probability Theory and Related Fields}, 152(3):559--588, 2012.

\bibitem{Federico2016}
L.~Federico and R.~Hofstad.
\newblock Critical window for connectivity in the configuration model.
\newblock {\em Combinatorics, Probability and Computing}, 26(5):660–680,
  2017.

\bibitem{Gil59}
E.~N. Gilbert.
\newblock Random graphs.
\newblock {\em Annals of Mathematical Statistics}, 30:1141--1144, (1959).

\bibitem{Greenwood1987}
P.~E. Greenwood and A.~A. Novikov.
\newblock One-sided boundary crossing for processes with independent
  increments.
\newblock {\em Theory of Probability \& Its Applications}, 31(2):221--232,
  1987.

\bibitem{Hofs17}
R.~Hofstad.
\newblock {\em Random graphs and complex networks. Volume 1}.
\newblock Cambridge Series in Statistical and Probabilistic Mathematics.
  Cambridge University Press, Cambridge, (2017).

\bibitem{Jan08}
S.~Janson.
\newblock The largest component in a subcritical random graph with a power law
  degree distribution.
\newblock {\em The Annals of Applied Probability}, {\bf 18}(4):1651--1668,
  (2008).

\bibitem{Jan09b}
S.~Janson.
\newblock On percolation in random graphs with given vertex degrees.
\newblock {\em Electronic Journal of Probability}, 14:no. 5, 87--118, (2009).

\bibitem{Jan09}
S.~Janson.
\newblock The probability that a random multigraph is simple.
\newblock {\em Combinatorics, Probability and Computing}, {\bf
  18}(1-2):205--225, (2009).

\bibitem{Jan14}
S.~Janson.
\newblock The probability that a random multigraph is simple. {II}.
\newblock {\em Journal of Applied Probability}, {\bf 51A}(Celebrating 50 Years
  of The Applied Probability Trust):123--137, (2014).

\bibitem{Kes80}
H.~Kesten.
\newblock The critical probability of bond percolation on the square lattice
  equals {${1\over 2}$}.
\newblock {\em Communications in Mathematical Physics}, 74(1):41--59, (1980).

\bibitem{KesZha90}
H.~Kesten and Y.~Zhang.
\newblock The probability of a large finite cluster in supercritical
  {B}ernoulli percolation.
\newblock {\em The Annals of Probability}, 18(2):537--555, (1990).

\bibitem{Korkali2017}
M.~Korkali, J.~G. Veneman, B.~Tivnan, and P.~Hines.
\newblock Reducing cascading failure risk by increasing infrastructure network
  interdependency.
\newblock {\em Scientific Reports}, 7, 10 2017.

\bibitem{Luc92}
T.~{\L}uczak.
\newblock Sparse random graphs with a given degree sequence.
\newblock In {\em Random graphs, {V}ol.\ 2 ({P}ozna\'n, 1989)}, Wiley-Intersci.
  Publ., pages 165--182. Wiley, New York, (1992).

\bibitem{MolRee95}
M.~Molloy and B.~Reed.
\newblock A critical point for random graphs with a given degree sequence.
\newblock In {\em Proceedings of the {S}ixth {I}nternational {S}eminar on
  {R}andom {G}raphs and {P}robabilistic {M}ethods in {C}ombinatorics and
  {C}omputer {S}cience, ``{R}andom {G}raphs '93'' ({P}ozna\'n, 1993)},
  volume~{\bf 6}, pages 161--179, (1995).

\bibitem{morone2015}
F.~Morone and H.~A. Makse.
\newblock Influence maximization in complex networks through optimal
  percolation.
\newblock {\em Nature}, 524(7563):65, 2015.

\bibitem{motter2004}
A.~E. Motter.
\newblock Cascade control and defense in complex networks.
\newblock {\em Physical Review Letters}, 93:098701, Aug 2004.

\bibitem{Motter2002}
A.~E. Motter and Y.~C. Lai.
\newblock Cascade-based attacks on complex networks.
\newblock {\em Physical Review E}, 66(065102), 2002.

\bibitem{Petrov1975}
V.~V. Petrov.
\newblock {\em Sums of Independent Random Variables}.
\newblock Ergebnisse der Mathematik und ihrer Grenzgebiete. Springer Berlin
  Heidelberg, 1975.

\bibitem{Pradhan2010}
S.~Pradhan, A.~Hansen, and B.K. Chakrabarti.
\newblock Failure processes in elastic fiber bundles.
\newblock {\em Reviews of Modern Physics}, 82(1):499--555, 2010.

\bibitem{Qi2013}
J.~{Qi}, I.~{Dobson}, and S.~{Mei}.
\newblock Towards estimating the statistics of simulated cascades of outages
  with branching processes.
\newblock {\em IEEE Transactions on Power Systems}, 28(3):3410--3419, Aug 2013.

\bibitem{Simon1955}
H.~A. Simon.
\newblock {On a clas of skew distribution functions}.
\newblock {\em Biometrika}, 42(3-4):425--440, 12 1955.

\bibitem{Sloothaak2016}
F.~Sloothaak, S.~C. Borst, and B.~Zwart.
\newblock Robustness of power-law behavior in cascading line failure models.
\newblock {\em Stochastic Models}, 34(1):45--72, 2018.

\bibitem{SloWacZwa2017Split}
F.~Sloothaak, S.~C. Borst, and B.~Zwart.
\newblock The impact of a network split on cascading failure processes.
\newblock {\em Stochastic Systems}, 9(4):392--416, 2019.

\bibitem{SloWacZwa2017}
F.~Sloothaak, V.~Wachtel, and B.~Zwart.
\newblock First-passage time asymptotics over moving boundaries for random walk
  bridges.
\newblock {\em Journal of Applied Probability}, 55(2):627–651, 2018.

\bibitem{suki1994}
B.~Suki, A.~Barab{\'a}si, Z.~Hantos, F.~Pet{\'a}k, and H.~E. Stanley.
\newblock Avalanches and power-law behaviour in lung inflation.
\newblock {\em Nature}, 368(6472):615, 1994.

\bibitem{SunHouSunQi2018}
K.~Sun, Y.~Hou, W.~Sun, and J.~Qi.
\newblock {\em Power System Control Under Cascading Failures: Understanding,
  Mitigation, and System Restoration}.
\newblock Wiley-IEEE Press, 2019.

\bibitem{WachtelDenisov2016}
V.~Wachtel and D.~Denisov.
\newblock An exact asymptotics for the moment of crossing a curved boundary by
  an asymptotically stable random walk.
\newblock {\em Theory of Probability \& Its Applications}, 60(3):481--500,
  2016.

\bibitem{Wang2010}
Z.~Wang, A.~Scaglione, and R.~J. Thomas.
\newblock Generating statistically correct random topologies for testing smart
  grid communication and control networks.
\newblock {\em IEEE Transactions on Smart Grid}, 1(1):28--39, June 2010.

\bibitem{Watts2002simplemodelof}
D.J. Watts.
\newblock {A simple model of global cascades on random networks}.
\newblock {\em Proceedings of the National Academy of Sciences of the United
  States of America}, 99(9):5766, 2002.

\bibitem{Zheng2008}
J.~Zheng, Z.~Gao, X.~Zhao, and F.~Bai-Bai.
\newblock Extended fiber bundle model for traffic jams on scale-free networks.
\newblock {\em International Journal of Modern Physics C (IJMPC)},
  19(11):1727--1735, 2008.

\end{thebibliography}

\newpage
\appendix
\section{Lists of variables}\label{app:Notation}
\begin{table}[htb]
	\centering
	\setlength\extrarowheight{2.3pt}
	\begin{tabular}{ | c | p{12cm} |}    
		\hline
		$n$ & \# vertices \\
		\hline
		$\mathbf{d}$ & (Fixed) degree sequence\\
		\hline
		$m=m_n(\mathbf{d})$ & \# edges in a configuration model with degree sequence $\mathbf{d}$ \\
		\hline
		$\CMd$ & Configuration model with degree sequence $\mathbf{d}$ \\
		\hline
		$\overline{CM}_n(\mathbf{d})$ & Configuration model with degree sequence $\mathbf{d}$ conditioned to be connected\\
		\hline
		$n_i$ & \# vertices with degree $i$ \\
		\hline
		$p_i$ & Fraction of vertices with degree $i$ \\
		\hline
		$D_n$ & Degree of a vertex chosen uniformly at random from $[n]$ \\
		\hline
		$D$ & Limiting degree variable of $D_n$ \\
		\hline
		$d$ & Average degree of $\mathbf{d}$ \\
		\hline
		$\theta$ & Disturbance constant \\
		\hline
		$l_j^m(i)$ & Total load surge at edge $j$ after experiencing $i$ load surges in a graph with $m$ edges \\
		\hline
		$|E_j^m(i)|$ & \# edges in the component containing edge $j$ after experiencing $i$ load surges \\
		\hline
		$A_{n,\mathbf{d}}$ & \# edge failures after the cascading failure process \\
		\hline
	\end{tabular}
	\caption{List of variables commonly used throughout this chapter.}
	\label{tab:NotationTableFirst}
\end{table}

\begin{table}[htb]
	\centering
	\setlength\extrarowheight{2.3pt}
	\begin{tabular}{ | c | p{12cm} |}    
		\hline
		$q$ & Removal probability in the percolation process\\
		\hline
		$CM_n(\mathbf{d},q)$ & Percolated configuration model with removal probability $q$ \\
		\hline
		$\mathcal{C}$ & Component of a graph, sometimes also denoted as $\mathcal{C}(x)$ when referring to the component that contains vertex or edge $x$ \\
		\hline
		$\Cmax$ & Largest component of a graph \\
		\hline
		$T_{n,\mathbf{d}}$ & \# edges that are sequentially removed uniformly at random for the first disconnection to occur in the connected configuration model\\
		\hline
		$T$ & Limiting variable of $m^{1/2}T_{n,\mathbf{d}}$\\
		\hline
		$|\hat{E}_m(i)|$ & \# edges in the giant (largest) component after $i$ edges have been removed uniformly at random \\
		\hline
		$|\tilde{E}_m(i)|$ & \# edges outside the giant (largest) component after $i$ edges have been removed uniformly at random \\
		\hline
	\end{tabular}
	\caption{List of variables commonly used in percolation/sequential edge-removal process.}
	\label{tab:PercolationNotation}
\end{table}

\begin{table}[htb]
	\centering
	\setlength\extrarowheight{2.3pt}
	\begin{tabular}{ | c | p{12cm} |}    
		\hline
		$R_n$ & \# removed half-edges in the first step of the explosion algorithm \\
		\hline
		$\mathbf{d}'$ & Degree sequence of configuration graph in step three of the explosion method ($\mathbf{d}' \in \mathbb{N}^{n+R_n}$)\\
		\hline
		$CM_N(\mathbf{d}')$ & Configuration model in step three of the explosion method with degree sequence $\mathbf{d}'$ \\
		\hline
		$CM_n(\mathbf{d},q)$ & Resulting configuration model in step four of the explosion method, indistinguishable from the percolated configuration model with removal probability $q$ \\
		\hline
		$n_i'$ & \# vertices of degree $i$ in $CM_N(\mathbf{d}')$ \\
		\hline
		$p_i'$ & $\lim_{n \rightarrow \infty} n_i'/n$ 	\\
		\hline
		$n_{l,j}$ & \# vertices of degree $l$ in $\textbf{d}$ that have degree $j$ in $\textbf{d}'$\\
		\hline
		$p_{l,j}$ & Probability for a vertex of degree $l$ to retain $j$ half-edges after the first step of the explosion algorithm\\
		\hline
		$L_k'(n)$ & \# components that are lines of length $k\geq 2$ in $\Cmd$\\
		\hline
		$C_k'(n)$ & \# components that are cycles of length $k \geq 1$ in $\Cmd$\\
		\hline
	\end{tabular}
	\caption{List of variables commonly used in the explosion algorithm.}
	\label{tab:OverviewExplosionAlgNotation}
\end{table}

\begin{table}[!ht]
	\centering
	\setlength\extrarowheight{2.3pt}
	\begin{tabular}{ | c | p{12cm} |}    
		\hline
		$\hat{A}_{n,\mathbf{d}}$ & \# edges that were contained in the largest component upon failure during the cascade \\
		\hline
		$\tilde{A}_{n,\mathbf{d}}$ & \# edges that were contained outside the largest component upon failure during the cascade \\
		\hline
		$A_{n+1}^*$ & \# edge failures in the cascading failure process on a star topology with $n+1$ nodes and $m=n$ edges \\
		\hline
		$\kappa(i)$ & \# edges that are contained in the largest component upon removal when $i$ edges are sequentially removed uniformly at random \\
		\hline 
		$\upsilon(i)$ & Minimum \# edges that need to be removed uniformly at random for the sum of $\upsilon(i)$ and \# edges outside the giant to exceed $i$ \\
		\hline
		$\varrho(i)$ & \# edges that need to be removed uniformly at random such that $i$ edges were contained in the giant component upon failure \\
		\hline
		$S_i$ & $\sum_{j=1}^i \left(1-\textrm{Exp}_j(1)\right)$ \\
		\hline
		$L_{i,m}$ & Scaled perturbed load surge, formally defined as in~\eqref{eq:SurgeFake} \\
		\hline
		$S_{i,m}$ & Random walk defined as $\sum_{j=1}^i L_{j,m}-\textrm{Exp}_{j,m}(1)$ \\
		\hline
		$Y_{i,m}$ & Random walk $\sum_{j=1}^i (1- L_{j,m})$\\
		\hline
		$\tau_m$ & First-passage time of random walk $S_{i,m}$ to be less than $1-\theta$ \\
		\hline
		$T_g$ & First-passage time of random walk $S_i$ to move below a boundary sequence $(g_i)_{i \in \mathbb{N}}$ \\
		\hline
	\end{tabular}
	\caption{List of variables commonly used in the failure process.}
	\label{tab:NotationTableFinal}
\end{table}
 
\cleardoublepage
\section{Proofs of results on the disintegration of the graph}\label{app:PercolationResults}
In this appendix, we present several proofs of results given in Section~\ref{sec:Disintegration}.

\begin{proof}[Proof of Lemma~\ref{lem:CascadePercolation}]
	Due to the i.i.d.~property of the surplus capacities, it holds that $\tilde{E}'(i)$ has the distribution of $i$ edges chosen uniformly without repetitions from $[m]$, so $\Prob (\tilde{E}'(i)=B)=\binom{m}{i}^{-1}$.
	
	Since all edges in $CM_n(\mathbf{d},q)$ are removed independently with probability $q$, it holds for all sets $B \subseteq E$ with $|B|=i$ that
	\begin{align}
	\Prob (E'(G(q))=B)=q^i(1-q)^{m-i},
	\end{align}
	while 
	\begin{align}\label{eq:RemovedBinomial}
	\Prob (|E'(G(q))|=i)= \binom{m}{i}q^i(1-q)^{m-i}.
	\end{align}
	Since $\{E'(G(q))=B \} \subseteq \{|E'(G(q))|=i\}$, we obtain
	\begin{align}
	\Prob(E'(G(q))=B\mid |E'(G(q))|=i)=\binom{m}{i}^{-1},
	\end{align}
	so \eqref{eq:iUniform} holds.
	From \eqref{eq:RemovedBinomial} we obtain \eqref{eq:RemovedEdges} by concentration of $\textrm{Bin}(m,q)$ if $qm\to \infty$.
\end{proof}

\begin{proof}[Proof of Lemma~\ref{lem:MultiVariateMoments}]
	We use the explosion algorithm from Algorithm~\ref{alg:ExplosionAlgorithm}. To illustrate the type of arguments we use to prove this statement, we first consider the first moment only.
	
	Define $ \Vcal_j$ as the set of all vertices of degree~$j$ in $\mathbf d'$. Recall the degree sequence $\mathbf{d}'$ from Lemma~\ref{lem:NewDegreeDistribution}, and we define 
	\begin{align*}
	L_k=\{ \{v_1,v_2,...,v_k\}: v_1, v_k \in \mathcal V_1; v_2,...,v_{k-1} \in \mathcal V_2 \},
	\end{align*}
	the set of all collections of $k$ vertices that could form a line. Note that
	\begin{align*}
	L_k' (n) = \sum_{l \in \mathcal L_k} \mathbbm 1_{\{l \text{ forms a line}\}}.
	\end{align*} 
	
	\noindent
	Due to Lemma~\ref{lem:NewDegreeDistribution}, we observe that
	\begin{align*}
	\E & [L_k'(n)] = \E\left[\sum_{l \in \mathcal L_k} \mathbbm 1_{\{l \text{ forms a line}\}} \right] \\
	&=\E\left[ \binom{n_1'}{ 2} \binom{n_2'}{k-2}\frac{2k-4}{2m-1}\frac{2k-6}{2m-3}  \cdots \frac{2}{2m-2k+5} 
	\frac{1}{2m-2k+3} \right]\\
	&=\E \left[ \frac{{n_1'}^2 {n_2'}^{k-2}}{2 (k-2)!}\frac{(2k-4)!!}{(2m)^{k-1}} \right](1+o(1)) = \frac{i^2}{m} \frac{1}{4}\Big(1+ \frac{2p_2}{d}\Big)^2\Big(\frac{2p_2}{d}\Big)^{k-2}(1+o(1)).
	\end{align*}
	
	Next, we generalize these arguments to higher and mixed moments of the variables  $(L_k'(n))_{n\geq 2}$. We follow the same approach used in~\cite{Federico2016} where the convergence of the number of lines to a sequence of Poisson variables was proved in the critical window for connectivity. Using the method of moments, in this case, we need to prove concentration of the number of vertices and edges in line components.
	
	We prove~\eqref{eq:MultivariateMoments} by induction. With a slight abuse of notation, we start the induction step at $k=1$, or alternatively, at $k=2$ with $r_2=0$. Then, both sides in \eqref{eq:MultivariateMoments} are equal to one, and hence the induction hypothesis is satisfied.
	
	Next, we show how to advance the induction hypothesis. We define 
	\begin{align*}
	W_k(\mathbf{r}) = \left\{ \bigcup_{j=2}^k \{ l_j(1),...,l_j(r_j) \} : l_j(h) \in \mathcal{L}_j \textrm{ for all } 1 \leq h \leq r_j, 2 \leq j \leq k \right\},
	\end{align*}
	the collection of sets of $\sum_{j=2}^k r_j$ possible lines. Moreover, for a set $w_k(\mathbf{r}) \in W_k(\mathbf{r})$, we define $\mathcal{E}(w_{k}(\mathbf r))$ as the event that all elements in the set $w_k(\mathbf{r})$ form a line component in $\Cmd$. Then, using the tower property, we can rewrite
	\begin{align*}
	&\E [L_2'(n)^{r_2}\cdots L_k'(n)^{r_h}] = \E\left[ \sum_{w_{k}(\mathbf r) \in W_k(\mathbf{r})}  \indi_{\mathcal{E}(w_k(\mathbf{r}))} \right]\\
	&= \E \left[ \sum_{w_{k-1}(\mathbf r) } \indi_{\mathcal{E}(w_{k-1}(\mathbf r))} \E\left[ \sum_{l_k(1),\ldots,l_k(r_k)\in \mathcal L_k} \indi_{l_k(1)} \indi_{l_k(2)} \cdots \indi_{l_k(r_k)}\mid \mathcal{E}(w_{k-1}(\mathbf r))\right] \right],
	\end{align*}
	where $\indi_{l_k(h)}$ denotes the indicator of the event that the set $l_k(h)$ forms a line. Next, we show that for every $w_k(\mathbf{r}) \in W_k(\mathbf{r})$, 
	\begin{align*}
	\Big(\frac{m}{i^2}\Big)^{r_k} \sum_{l_k(1),\ldots,l_k(r_k)\in \mathcal L_k} &	\E[\indi_{l_k(1)} \indi_{l_k(2)} \cdots \indi_{l_k(r_k)}\vert \mathcal{E}(w_{k-1}(\mathbf r))]\\
	&=\left( \frac{1}{4}\Big(1+ \frac{2p_2}{d}\Big)^2\Big(\frac{2p_2}{d}\Big)^{k-2}\right)^{r_k}(1+o(1)).
	\end{align*}
	Note that by induction, this suffices to conclude~\eqref{eq:MultivariateMoments}.
	
	First, note that if any of the $l_k(j), 1 \leq j \leq r_k,$ contains any of the vertices used in $w_{k-1}(\mathbf r)$, then it is not possible for it to form a line of length~$k$ as these vertices are already contained in line components of a smaller length. In that case, $\E[\indi_{l_k(1)} \indi_{l_k(2)} \cdots \indi_{l_k(r_k)}\mid \mathcal{E}(w_{k-1}(\mathbf r))]=0$. Therefore, we can consider the part of the graph excluding the line components in $w_{k-1}(\mathbf r)$. This part of the graph is also a configuration model, but with a slightly altered degree sequence. Note that we exclude finitely many line components of finite length, and hence only exclude finitely many vertices and edges. With a slight abuse of notation, write $r'= r'(l_k(1),...,l_k(r_k))$ as the number of mutually distinct lines. Then the probability that $r'$ mutually distinct lines can be formed in the graph excluding the line components in $w_{k-1}(\mathbf r)$ is given by
	\begin{align*}
	\E[\indi_{l_k(1)} \indi_{l_k(2)} \cdots \indi_{l_k(r_k)}\vert \mathcal{E}(w_{k-1}(\mathbf r))]= \frac{(2k-4)!!^{r'}}{(2m)^{r'(k-1)}}(1+o(1)).
	\end{align*}
	We sum these contributions over the number of possible sets that exist in the subgraph. Write $C(r_k,r')$ as the number of distinct sets of $r_k$ lines that contain the same set of $r'$ mutually distinct lines of length $k$. Importantly, $C(r_k,r_k)=1$ and $C(r_k,r')$ is a finite integer if $1 \leq r' \leq r_k-1$. Recalling Lemma~\ref{lem:NewDegreeDistribution}, we obtain
	\begin{align*}
	\Big(\frac{m}{i^2}\Big)^{r_k} &\sum_{l_k(1),\ldots,l_k(r_k)\in \mathcal L_k} \E[\indi_{l_k(1)} \indi_{l_k(2)} \cdots \indi_{l_k(r_k)}\vert \mathcal{E}(w_{k-1}(\mathbf r))] \\
	&=\Big(\frac{m}{i^2}\Big)^{r_k} \E\left[ \sum_{r'=1}^{r_k} C(r_k,r') \left( \frac{{n_1'}^2 {n_2'}^{k-2}}{2 (k-2)!}\frac{(2k-4)!!}{(2m)^{k-1}}  \right)^{r'} \right](1+o(1)) \\
	&=  \left( \frac{1}{4}\Big(1+ \frac{2p_2}{d}\Big)^2\Big(\frac{2p_2}{d}\Big)^{k-2}\right)^{r_k} (1+o(1)),
	\end{align*}
	concluding the proof.
\end{proof}

\begin{proof}[Proof of Corolllary~\ref{cor:HigherMoments}]
	Note that it follows from Lemma~\ref{lem:MultiVariateMoments}
	\begin{align*}
	\E [L_k'(n)] &= \frac{i^2}{m} \frac{1}{4}\Big(1+ \frac{2p_2}{d}\Big)^2\Big(\frac{2p_2}{d}\Big)^{k-2}(1+o(1)), \hspace{1cm} k\geq 2.
	\end{align*}
	Consequently, for every $\varepsilon > 0$ there exists a $N >0$ such that for all $n \geq N$,
	\begin{align*}
	\frac{m}{i^2}\E [L_k'(n)] \leq \frac{1}{4}\Big(1+ \frac{2p_2+\varepsilon}{d-\varepsilon}\Big)^2\Big(\frac{2p_2+\varepsilon}{d-\varepsilon}\Big)^{k-2}, \hspace{1cm} k\geq 2.
	\end{align*}
	In particular, for $\varepsilon$ small enough $\frac{2p_2+\varepsilon}{d-\varepsilon}<1$ so that the sequence converges to zero exponentially fast in $k$. We apply dominated convergence to obtain	
	\begin{align}
	\E\Big[\frac{m}{i^2}\sum_{k=2}^\infty kL_k'(n)\Big] &\to  \frac{(d - p_2) (d + 2 p_2)^2}{2 d (d - 2 p_2)^2}, \\
	\E\Big[\frac{m}{i^2}\sum_{k=2}^\infty (k-1)L_k'(n)\Big] &\to  \frac{(d + 2 p_2)^2}{4 (d - 2 p_2)^2}.
	\end{align}
	In other words, we have derived the expected number of vertices and edges in line components in $\Cmd$. Next, we prove~\eqref{eq:HigherMoments} for every $j\geq 2$. We define for a sequence $\mathbf{r}$ of positive integer values, $|\mathbf{r}|_1=\sum_{h=1}^k r_h$, i.e. its~$\ell_1$ norm. We write
	\begin{align*}
	\E\Big[\Big(\frac{m}{i^2}\sum_{k=2}^\infty kL_k'(n)\Big)^j\Big]=\frac{m^j}{i^{2j}}\sum_{\mathbf r:|\mathbf{r}|_1=j}\prod_{h\geq 2}\E[(hL_h'(n))^{r_h}].
	\end{align*}
	For every $\varepsilon \geq 0$, it holds for all $n$ sufficiently large that
	\begin{align}
	\label{eq:LineMultivariateDominated}
	\Big(\frac{m}{i^2}\Big)^{|\mathbf{r}|_1}\prod_{h\geq 2}\E[(L_h'(n))^{r_h}]\leq \frac{1}{4^{|\mathbf{r}|_1}}\Big(1+ \frac{2p_2+\varepsilon}{d-\varepsilon}\Big)^{2|\mathbf{r}|_1} \Big(\frac{2p_2+\varepsilon}{d-\varepsilon}\Big)^{\sum_{h\geq 2} (h-2)r_h}.
	\end{align}
	If $\varepsilon$ is small enough, then $\frac{2p_2+\varepsilon}{d-\varepsilon}<1$, and thus the sequence is decreasing exponentially fast in $\sum_{h\geq 2} hr_h$. Applying dominated convergence thus yields~\eqref{eq:HigherMoments}. 
\end{proof}

\section{Proofs of results on the cascading failure process}\label{app:ResultsCasc}

\begin{proof}[Proof of Lemma~\ref{lem:LBSimilarStoppingTimeBehavior}]
	First, we observe that
	\begin{align*}
	\Prob\left(T_{g^-} > k  \right) \geq \Prob\left(T_{1-\theta} > k  \right),
	\end{align*}
	and hence it suffices to show that the reversed inequality holds asymptotically. Our proof will be similar to the proof of Lemma~\ref{lem:UBSimilarStoppingTimeBehavior}, but adapted to provide an upper bound. 
	
	Fix $\epsilon >0$ small as in Lemma~\ref{lem:UBSimilarStoppingTimeBehavior}, and define the piecewise constant boundary
	\begin{align*}
	\hat{h}_{i,k}^\epsilon = \left\{ \begin{array}{ll}
	h^{(0)}=1-\theta & \textrm{if } i \leq t_{0,k}^\epsilon = l,\\
	-h^{(j)} & \textrm{if } t_{j-1,k}^\epsilon < i \leq t_{j,k}^\epsilon, \; 1 \leq j \leq r-1,\\
	-h^{(r)}  & \textrm{if } i > t_{r-1,k}^\epsilon.
	\end{array}\right.
	\end{align*}
	for $r$, times $t_{j,k}^\epsilon$ and levels $h^{(j)}$ defined as in the proof of Lemma~\ref{lem:UBSimilarStoppingTimeBehavior}. We note that for every fixed $\delta>0$,
	\begin{align*}
	&\Prob\left(T_{g^-} > k  \right) \leq \Prob\left(T_{\hat{h}^\epsilon} > k ; S_{t_{j,k}^\epsilon} \in \left(\delta\sqrt{t_{j,k}^\epsilon},1/\delta\sqrt{t_{j,k}^\epsilon}\right), \;\;\; \forall \, 0 \leq j \leq r-1 \right) \\
	&\hspace{5cm} + \sum_{j=0}^r \Prob\left(T_{g^-}  > k ; S_{t_{j,k}^\epsilon}  \not\in \left(\delta\sqrt{t_{j,k}^\epsilon},1/\delta\sqrt{t_{j,k}^\epsilon}\right) \right).
	\end{align*}
	Using analogous arguments as in Lemma~\ref{lem:UBSimilarStoppingTimeBehavior}, we obtain
	\begin{align*}
	&\Prob\left(T_{\hat{h}^\epsilon} > k ; S_{t_{j,k}^\epsilon} \in \left(\delta\sqrt{t_{j,k}^\epsilon},1/\delta\sqrt{t_{j,k}^\epsilon}\right), \;\;\; \forall \, 0 \leq j \leq r-1 \right) \\
	&\hspace{8cm}\leq (1+o(1)) \Prob\left(T_{1-\theta} > k \right).
	\end{align*}
	For the other terms, define the sequence $(\tilde{h}_i)_{i \in \mathbb{N}}$ with $\tilde{h}_i = \min\{1-\theta,-i^\gamma\}$. Then, due to Theorem~1 of~\cite{DenisovSakhanenkoWachtel2016},
	\begin{align*}
	\sum_{j=0}^r &\Prob\left(T_{g^-}  > k ; S_{t_{j,k}^\epsilon}  \not\in \left(\delta\sqrt{t_{j,k}^\epsilon},1/\delta\sqrt{t_{j,k}^\epsilon}\right) \right) \\
	&\hspace{2cm}\leq  \sum_{j=0}^{r-1} \Prob\left( S_{t_{j,k}^\epsilon}  \not\in \left(\delta\sqrt{t_{j,k}^\epsilon},1/\delta\sqrt{t_{j,k}^\epsilon}\right) \big| T_{\tilde{h}}  > k \right) \Prob\left( T_{\tilde{h}}  > k \right) \\
	&\hspace{2cm} \leq (1+o(1)) r \left(1-e^{-\frac{\delta^2}{2}} + e^{-\frac{1}{2\delta^2}} \right) \Prob\left( T_{\tilde{h}}  > k \right).
	\end{align*}
	Letting $\delta \downarrow 0$ yields~\cite{DenisovSakhanenkoWachtel2016}
	\begin{align*}
	\sum_{j=0}^r &\Prob\left(T_{g^-}  > k ; S_{t_{j,k}^\epsilon}  \not\in \left(\delta\sqrt{t_{j,k}^\epsilon},1/\delta\sqrt{t_{j,k}^\epsilon}\right) \right) = o\left(  \Prob\left( T_{\tilde{h}}  > k \right) \right) = o\left(  k^{-1/2}\right).
	\end{align*}
	Since 
	\begin{align*}
	\Prob\left(T_{1-\theta} > k \right) \sim \frac{2\theta}{\sqrt{2\pi}} k^{-1/2},
	\end{align*}
	we conclude that
	\begin{align*}
	\limsup_{k \rightarrow \infty} \frac{\Prob\left(T_{g^-} > k  \right) }{\Prob\left(T_{1-\theta} > k  \right) } \leq 1.
	\end{align*}
\end{proof}

\begin{proof}[Proof of Lemma~\ref{lem:upsilonBound}]
	This statement is a consequence of Theorem~\ref{thm:DeviationsOutsideGiant}. Recall that by definition,
	\begin{align*}
	\upsilon(k)+|\tilde{E}_m(\upsilon(k))| \geq k,
	\end{align*}
	and $\upsilon(k) \leq k$. Moreover,
	\begin{align*}
	|\tilde{E}_m(\upsilon(k))| = m -\upsilon(k) - |\hat{E}_m(\upsilon(k))| \leq \max_{1\leq j \leq k} \left\{ m -j - |\hat{E}_m(j)| \right\}.
	\end{align*}
	It follows that
	\begin{align*}
	\Prob\left( |\tilde{E}_m(\upsilon(k))| \geq k^\alpha \right) &\leq \Prob\left( \max_{1\leq j \leq k} \left\{ m -j - |\hat{E}_m(j)| \right\} \geq k^\alpha \right) \\
	& \leq \Prob\left( \max_{1\leq j \leq k} \left\{ m -j - |\hat{E}_m(j)| \right\} \geq j^\alpha \right) = o(m^{-1/2})
	\end{align*}
	by Theorem~\ref{thm:DeviationsOutsideGiant}. We conclude that
	\begin{align*}
	\Prob\left( \upsilon(k) \leq k- k^\alpha \right) \leq \Prob\left( |\tilde{E}_m(\upsilon(k))| \geq k^\alpha \right) = o(m^{-1/2}).
	\end{align*}
\end{proof}

\begin{proof}[Proof of Lemma~\ref{lem:varrhoBound}]
	Again, this is a consequence of Theorem~\ref{thm:DeviationsOutsideGiant}. We note that for every $1 \leq l \leq m$,
	\begin{align*}
	\kappa(l) = \sum_{i=1}^s \textrm{Ber}(\pi_{i+1}),
	\end{align*}
	where 
	\begin{align*}
	\pi_i =  \frac{|\hat{E}_m(i-2)|}{m-i+2}
	\end{align*}
	is a random variable. Due to Theorem~\ref{thm:DeviationsOutsideGiant}, 
	\begin{align*}
	\Prob\left(\pi_i \leq 1 - \frac{i^\alpha}{m-i+2} \textrm{ for some } 2 \leq i \leq k+1 \right) = o(m^{-1/2}).
	\end{align*}
	Since for every $\alpha \in (0,1)$, the function $i^{\alpha-1}/(m-i+1)$ is an increasing function in $i$, it follows that 
	\begin{align*}
	\Prob\left(\pi_i \leq 1 - k^{\alpha-1} \textrm{ for some } 2 \leq i \leq k+1 \right) = o(m^{-1/2}).
	\end{align*}
	We derive the bound
	\begin{align*}
	\Prob\left( \varrho(k) > k+ k^\alpha \right) &= \Prob\left( \kappa\left(k+ k^{(\alpha+1)/2}\right) \leq k \right) \leq \Prob\left( \sum_{i=1}^{k+ k^{(\alpha+1)/2}} \textrm{Ber}(\pi_{i+1}) \leq k \right) \\
	& \leq \Prob\left( \sum_{i=1}^{k+ k^{(\alpha+1)/2}} \textrm{Ber}(k^{\alpha-1}) > k^{(\alpha+1)/2} \right) \\
	&\leq \exp\left\{-\frac{1}{2}k^{(1-\alpha)/2} (1+o(1)) \right\} = o(m^{-1/2}),
	\end{align*}
	where the last inequality is due to the Chernoff bound. 
\end{proof}

\end{document}